\documentclass[11pt]{amsart}
\usepackage{amscd,amssymb}
\usepackage{amsthm,amsmath}

\usepackage{fancyhdr}
\usepackage{supertabular}
\usepackage{setspace}
\usepackage{color}
\usepackage{longtable}

\usepackage[matrix,arrow,curve]{xy}
\usepackage{graphicx}
\usepackage{enumerate}
\usepackage{amsfonts}
\usepackage{cite}
\usepackage{pifont}
\usepackage{multirow}% http://ctan.org/pkg/multirow
\usepackage{hhline}% http://ctan.org/pkg/hhline
\usepackage[dvipsnames]{xcolor}

%%%%%%%%%%%

\usepackage{pgf,pgfarrows,pgfnodes,pgfautomata,pgfheaps,pgfshade}
\usepackage{tikz}

\makeatletter
\ifcase \@ptsize \relax% 10pt
  \newcommand{\miniscule}{\@setfontsize\miniscule{3}{7}}% \tiny: 5/6
    \newcommand{\stiny}{\@setfontsize\miniscule{5}{7}}% \tiny: 6/7
\or% 11pt
  \newcommand{\miniscule}{\@setfontsize\miniscule{3}{7}}% \tiny: 6/7
   \newcommand{\stiny}{\@setfontsize\miniscule{5}{7}}% \tiny: 6/7
\or% 12pt
  \newcommand{\miniscule}{\@setfontsize\miniscule{3}{7}}% \tiny: 6/7
    \newcommand{\stiny}{\@setfontsize\miniscule{5}{7}}% \tiny: 6/7
\fi \makeatother

\sloppy\pagestyle{plain}%
\oddsidemargin=0cm \evensidemargin=0cm%
\topmargin=-25pt \textwidth=16cm \textheight=23cm%
\pagestyle {headings}%

\newtheorem{theorem}[equation]{Theorem}
\newtheorem{lemma}[equation]{Lemma}
\newtheorem{proposition}[equation]{Proposition}
\newtheorem{corollary}[equation]{Corollary}
\newtheorem{conjecture}[equation]{Conjecture}

\newtheorem*{question*}{Question}

\newtheorem*{problem*}{Problem}

\theoremstyle{definition}
\newtheorem{example}[equation]{Example}
\newtheorem{definition}[equation]{Definition}
\newtheorem{remark}[equation]{Remark}

\theoremstyle{remark}

%\makeatletter\@addtoreset{equation}{section} \makeatother
%\renewcommand{\theequation}{\thesection.\arabic{equation}}

\makeatletter\@addtoreset{equation}{subsection} \makeatother

\newcommand{\mult}{\operatorname{mult}}

%%%%%%%%%%%%%%%%%%%%%%%%%%%%%%%%%%%%%%%%%
%%%%%%%%%%%%%%%%%%%%%%%%%%%%%%%%%%%%%%

\title{Cylinders in del Pezzo surfaces}

\author{Ivan Cheltsov, Jihun Park and Joonyeong Won}

\address{ \emph{Ivan Cheltsov}\newline \textnormal{School of Mathematics, The
University of Edinburgh
\newline \medskip James Clerk Maxwell Building,
The King's Buildings,
Mayfield Road, Edinburgh EH9 3JZ, UK.\newline
Laboratory of Algebraic Geometry, National Research University Higher School of Economics
\newline 7 Vavilova Str. Moscow, 117312, Russia.\newline
 \texttt{I.Cheltsov@ed.ac.uk}}}

\address{ \emph{Jihun Park}\newline \textnormal{Center for Geometry and Physics, Institute for Basic Science (IBS)
\newline \medskip 77 Cheongam-ro, Nam-gu, Pohang, Gyeongbuk, 37673, Korea. \newline
Department of
Mathematics, POSTECH \newline
 77 Cheongam-ro, Nam-gu, Pohang, Gyeongbuk,  37673, Korea. \newline
\texttt{wlog@postech.ac.kr}}}

\address{\emph{Joonyeong
Won}\newline \textnormal{Algebraic Structure and its Applications Research Center, KAIST \newline 335 Gwahangno, Yuseong-gu, Daejeon, 34143, Korea\newline \texttt{leonwon@kias.re.kr}}}%

\begin{document}

\begin{abstract}
On del Pezzo surfaces, we study effective ample $\mathbb{R}$-divisors such that the complements of their supports are isomorphic to $\mathbb{A}^1$-bundles over smooth affine curves.
\end{abstract}

\maketitle

All considered varieties are assumed to be
algebraic and defined over an algebraically closed field of
characteristic $0$ throughout this article.

\section{Introduction}
\label{sec:intro}

\subsection{Cylinders}

The purpose of this article is to study cylinders in rational surfaces and, more specially, in del Pezzo surfaces.
A cylinder in a projective variety $X$ is a Zariski open subset that is isomorphic
to $\mathbb{A}^1\times Z$ for some affine variety $Z$.
So, if $X$ is a rational surface, then $Z$ is just the projective line with finitely many missing points. 

One can easily see that every smooth rational surface contains cylinders (\cite[Proposition~3.13]{KPZ11a}).
However, this is no longer true for singular rational surfaces,
i.e., there are plenty of singular rational surfaces without any cylinder.
Let us explain how to find such rational surfaces. 
First, let $S$ be a rational surface with quotient singularities
and  suppose that $S$ has a cylinder $U$, i.e., a Zariski open subset in $S$ such that $U\cong\mathbb{A}^1\times Z$ for some affine curve $Z$. Consider the following commutative diagram
\begin{equation}\label{diagram}
\xymatrix{\mathbb{P}^1\times\mathbb{P}^1\ar[dd]^{\bar{p}_{2}}&\mathbb{A}^1\times\mathbb{P}^1\ar@{_{(}->}[l]\ar[dd]^{p_{2}}&~\mathbb{A}^1\times Z\cong U\ar@{_{(}->}[l]\ar[d]^{p_Z}\ar@{^{(}->}[r] &S\ar@{-->}^{\psi}[dd]& &\tilde{S}\ar[ll]_{\pi}\ar[ddll]^{\phi}& \\
&&Z\ar@{^{(}->}[rd] \ar@{_{(}->}[dl]&&&& \\
\mathbb{P}^1\ar@{=}[r]&\mathbb{P}^1\ar@{=}[rr] &&\mathbb{P}^1 & &&}
\end{equation}
where $p_Z$, $p_{2}$, and $\bar{p}_2$ are the natural projections to the second factors, $\psi$ is the  rational map induced by $p_Z$, $\pi$ is a birational morphism resolving the indeterminacy of $\psi$, and $\phi$ is a morphism.
By construction, a general fiber of $\phi$ is $\mathbb{P}^1$.
Let $C_1,\ldots,C_n$ be the irreducible curves in $S$ such that
$$
S\setminus U=\bigcup_{i=1}^{n}C_i.
$$
Then the curves $C_1,\ldots,C_n$ generate the divisor class group $\mathrm{Cl}(S)$ of the surface $S$
because $\mathrm{Cl}(U)=0$. In particular, one has 
\begin{equation}\label{equation:KPZ-r}
n\geqslant\mathrm{rank}\,\mathrm{Cl}(S).
\end{equation}
Let $E_1,\ldots, E_r$ be the $\pi$-exceptional curves, if any,
and let $\Gamma$ be the section of $\bar{p}_2$ that is the complement of $\mathbb{A}^1\times\mathbb{P}^1$ in $\mathbb{P}^1\times\mathbb{P}^1$.
Denote by $\tilde{C}_1,\ldots,\tilde{C}_n$, and $\tilde{\Gamma}$ the proper transforms of the curves $C_1,\ldots,C_n$, and $\Gamma$
on the surface $\tilde{S}$, respectively. Then $\tilde{\Gamma}$ is a section of $\phi$. Moreover, the curve $\tilde{\Gamma}$
is one of the curves $\tilde{C}_1,\ldots,\tilde{C}_n$ and $E_1,\ldots, E_r$. Furthermore, all the other curves among
$\tilde{C}_1,\ldots,\tilde{C}_n$ and $E_1,\ldots, E_r$ are irreducible components of some fibers of $\phi$. 
We may assume either $\tilde{\Gamma}=\tilde{C}_1$ or $\tilde{\Gamma}=E_r$.

If $\tilde{\Gamma}=\tilde{C}_1$, then $\psi$ is a morphism. Conversely, if $\psi$ is a morphism, then $\tilde{\Gamma}=\tilde{C}_1$.

Let $\lambda_1,\ldots,\lambda_n$ be arbitrary real numbers. Then
$$
K_{\tilde{S}}+\sum_{i=1}^n\lambda_i\tilde{C}_i+\sum_{i=1}^r\mu_iE_i= \pi^*\Big(K_{S}+\sum_{i=1}^n\lambda_iC_i\Big)
$$
for some real numbers $\mu_1,\ldots,\mu_r$. Let $\tilde{F}$ be a general fiber of $\phi$.
Then $K_{\tilde{S}}\cdot\tilde{F}=-2$ by the adjunction formula.
Put $F=\pi(\tilde{F})$.

 If $\tilde{\Gamma}=E_r$, then
\begin{multline*}
-2+\mu_r
=\left(K_{\tilde{S}}+\sum_{i=1}^n\lambda_i\tilde{C}_i+\sum_{i=1}^r\mu_iE_i\right)\cdot\tilde{F}=\pi^*\left(K_{S}+\sum_{i=1}^n\lambda_iC_i\right)\cdot\tilde{F}=\left(K_{S}+\sum_{i=1}^n\lambda_iC_i\right)\cdot F
\end{multline*}

If $\tilde{\Gamma}=C_1$, then
\begin{multline*}
-2+\lambda_1
=\left(K_{\tilde{S}}+\sum_{i=1}^n\lambda_i\tilde{C}_i+\sum_{i=1}^r\mu_iE_i\right)\cdot\tilde{F}=\pi^*\left(K_{S}+\sum_{i=1}^n\lambda_iC_i\right)\cdot\tilde{F}=\left(K_{S}+\sum_{i=1}^n\lambda_iC_i\right)\cdot F.
\end{multline*}
On the other hand, if $K_S+\sum_{i=1}^n\lambda_iC_i$ is pseudo-effective, then
$$
\Big(K_{S}+\sum_{i=1}^n\lambda_iC_i\Big)\cdot F\geqslant 0
$$
because $\tilde{F}$ is a general fiber of $\phi$.  
\begin{remark}\label{observation}   
We are therefore able to draw the following conclusions:
\begin{itemize}
\item if $K_S+\sum_{i=1}^n\lambda_iC_i$ is pseudo-effective, the log pair $(S,\sum_{i=1}^n\lambda_iC_i)$ is not log canonical;

\item if $K_S+\sum_{i=1}^n\lambda_iC_i$ is pseudo-effective for some real numbers $\lambda_i <2$, the rational map
$\psi$ cannot be a morphism.

 \end{itemize}
 This observation is originated from  \cite[Lemma~4.11]{KPZ11a},
\cite[Lemma~4.14]{KPZ11a}, and \cite[Lemma~5.3]{KPZ12b}.
\end{remark}
 From Remark~\ref{observation} it immediately follows that if a rational surface with pseudo-effective canonical class has only quotient singularities then it cannot contain any cylinder. 
 We will present various examples of such surfaces in Section~\ref{section:example}.

\subsection{Polar cylinders}
In general, it seems hopeless to determine which singular rational surfaces have cylinders and which do not have cylinders.
This is simply because we do not have any reasonable classification of singular del Pezzo surfaces.
Instead of this, we are to consider a similar problem for polarized surfaces, which has a significant application to theory of unipotent group actions  in affine geometry (for instance, see \cite{KPZ11a}, \cite{KPZ12a}, \cite{KPZ12b}).
To do this, let $S$ be a rational surface with at most quotient singularities.

\begin{definition}[\cite{KPZ11a}]
\label{definition:polar-cylinder} Let $M$  be an
$\mathbb{R}$-divisor on $S$. An $M$-polar cylinder in $S$ is a
Zariski open subset $U$ of $S$ such that
\begin{itemize}
\item $U=\mathbb{A}^1\times Z$ for some affine curve $Z$, i.e., $U$ is a cylinder in $S$,
\item there is an 
effective $\mathbb{R}$-divisor $D$ on $S$ with $D\equiv
M$ and $U=S\setminus \mathrm{Supp}(D)$.
\end{itemize}
\end{definition}
With the notation at the beginning, the second condition can be rephrased as follows:
\begin{equation}\label{equation:num-equiv} 
M\equiv\sum_{i=1}^{n}\lambda_i C_i
\end{equation}
for some positive real numbers $\lambda_1,\ldots,\lambda_n$. We here remark  that
on a log del Pezzo surface, numerical equivalence for $\mathbb{Q}$-divisors coincides with $\mathbb{Q}$-linear equivalence, i.e.,
if $S$ is a log del Pezzo surface and $M$ is a $\mathbb{Q}$-divisor, then \eqref{equation:num-equiv} can be rewritten as
\[M\sim_{\mathbb{Q}}\sum_{i=1}^{n}\lambda_i C_i
\]
for some positive rational numbers $\lambda_1,\ldots,\lambda_n$.

Let $\mathrm{Amp}(S)$  be the ample cone of $S$ in $\mathrm{Pic}(S)\otimes\mathbb{R}$. Denote by $\mathrm{Amp}^{cyl}(S)$ the set
$$\left\{ H\in \mathrm{Amp}(S) : \mbox{ there is an $H$-polar cylinder on $S$}\right\}.$$ 
This set will be called the cone of cylindrical ample divisors of $S$.
We have seen  that the set $\mathrm{Amp}^{cyl}(S)$ can be empty.
On the other hand, one can show that $\mathrm{Amp}^{cyl}(S)\ne\varnothing$ provided that $S$ is smooth (see \cite[Proposition~3.13]{KPZ11a}).

For smooth del Pezzo surfaces, \cite{CheltsovParkWon} and \cite{KPZ12b}  have achieved the following:

\begin{theorem}
\label{theorem:smooth-del-Pezzos-degree-1-2-3}
Let $S_d$ be a  smooth del Pezzo surface of degree $d$. Then the set 
$\mathrm{Amp}^{cyl}(S_d)$  contains the anticanonical class if and only if $d\geqslant 4$.
\end{theorem}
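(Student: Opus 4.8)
The plan is to prove the two directions separately, using Remark~\ref{observation} for the ``only if'' part and an explicit geometric construction for the ``if'' part.

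\smallskip

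\textbf{The direction $d\leqslant 3 \Rightarrow -K_{S_d}\notin\mathrm{Amp}^{cyl}(S_d)$.} Suppose $-K_{S_d}$ admits a polar cylinder $U$ with $S_d\setminus U=\bigcup_{i=1}^n C_i$ and $-K_{S_d}\equiv\sum_{i=1}^n\lambda_iC_i$ for positive reals $\lambda_i$. The idea is to rescale: consider $(1+\varepsilon)$ times this divisor, so that $K_{S_d}+\sum_{i=1}^n(1+\varepsilon)\lambda_iC_i\equiv\varepsilon(-K_{S_d})$, which is ample, hence pseudo-effective. By the first bullet of Remark~\ref{observation}, the log pair $(S_d,\sum_{i=1}^n(1+\varepsilon)\lambda_iC_i)$ is not log canonical for every $\varepsilon>0$. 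Letting $\varepsilon\to 0$ and using that the non-log-canonical locus is closed, one wants to conclude that $(S_d,\sum_{i=1}^n\lambda_iC_i)$ fails to be log canonical either, or more precisely that $-K_{S_d}$ cannot be written as such a boundary with a log canonical pair; this contradicts the known fact (the $\alpha$-invariant / global log canonical threshold computation for del Pezzo surfaces of degree $\leqslant 3$, cf.\ \cite{CheltsovParkWon}) that $\mathrm{lct}(S_d,-K_{S_d})\geqslant 1$, i.e.\ every effective $\mathbb{Q}$-divisor $\mathbb{Q}$-equivalent to $-K_{S_d}$ gives a log canonical pair, when $d\leqslant 3$. (For $d=1,2,3$ the global log canonical threshold is $\geqslant 1$; this is where the degree bound enters.) Thus no $-K_{S_d}$-polar cylinder can exist.

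\smallskip

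\textbf{The direction $d\geqslant 4 \Rightarrow -K_{S_d}\in\mathrm{Amp}^{cyl}(S_d)$.} Here I would construct the cylinder explicitly, degree by degree, and it suffices to do so for $d=4$ by the following monotonicity remark: if $S_d$ contains a $(-1)$-curve $C$ and $\pi\colon S_d\to S_{d+1}$ is its contraction, then an $(-K_{S_{d+1}})$-polar cylinder $U$ pulls back — after removing $C$ and adjusting — to a $(-K_{S_d})$-polar cylinder, since $\pi^*(-K_{S_{d+1}})=-K_{S_d}-C$ and one can add a small multiple of a suitable anticanonical curve through the blown-up point. So one reduces to exhibiting an anticanonical polar cylinder on $S_4$, $S_5$, $S_6$, and the reduction to $S_4$ handles $d\geqslant 4$ via a chain $S_4\leftarrow S_5\leftarrow S_6\leftarrow\cdots$ only if the blown-up points are general, so in practice I would treat $S_4,\ S_5,\ S_6,\ S_7,\ S_8,\ \mathbb{P}^1\times\mathbb{P}^1,\ \mathbb{P}^2$ — but the essential case is $d=4$. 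On $S_4$ (a smooth intersection of two quadrics in $\mathbb{P}^4$), choose a hyperplane section that is a union of two conics meeting at two points, or exploit the conic bundle structure: $S_4$ has a conic bundle $S_4\to\mathbb{P}^1$ with exactly four degenerate fibers, each a pair of $(-1)$-curves. Removing one component from each degenerate fiber together with one smooth fiber and one appropriate section yields a cylinder, and one checks that the corresponding boundary divisor $\sum\lambda_iC_i$ can be chosen numerically equivalent to $-K_{S_4}$ with all $\lambda_i>0$ by solving the resulting linear system in the N\'eron–Severi lattice (which has rank $6$ for $S_4$). For the Hirzebruch-type surfaces $\mathbb{P}^1\times\mathbb{P}^1$ and $\mathbb{P}^2$ and for $S_d$ with $d\geqslant 7$ the construction is classical and easy (e.g.\ on $\mathbb{P}^2$ remove a line and a suitable cubic; on $\mathbb{P}^1\times\mathbb{P}^1$ remove two rulings and a diagonal-type curve), and I would dispatch these first as warm-ups.

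\smallskip

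\textbf{Main obstacle.} The delicate point is the $d=4$ construction: one must simultaneously (i) arrange the removed curves so that the complement is genuinely a cylinder $\mathbb{A}^1\times Z$ — this forces the removed curves to include a full ``vertical'' structure (a section plus enough fibral components to generate $\mathrm{Cl}$ and to make the complement a line bundle over an affine curve), consistent with the inequality $n\geqslant\mathrm{rank}\,\mathrm{Cl}(S_4)=6$ from \eqref{equation:KPZ-r} — and (ii) check that $-K_{S_4}$ lies in the \emph{interior} of the cone spanned by these curves, i.e.\ the linear system $-K_{S_4}\equiv\sum\lambda_iC_i$ has a strictly positive solution. Verifying positivity of all $\lambda_i$ is the crux; it is exactly where $d=4$ is the boundary case and $d=3$ fails, and it should be cross-checked against the non-existence direction to make sure the construction is not accidentally ruled out by Remark~\ref{observation} (the boundary $\sum\lambda_iC_i$ must yield a pair that is \emph{not} log canonical, consistent with $K_{S_4}+\sum\lambda_iC_i\equiv 0$ being pseudo-effective, so the cylinder's projection $\psi$ is necessarily not a morphism — which matches the conic-bundle picture where $\psi$ contracts a section).
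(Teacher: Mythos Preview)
Your ``only if'' direction rests on a false premise. You assert that for $d\leqslant 3$ the global log canonical threshold satisfies $\mathrm{lct}(S_d,-K_{S_d})\geqslant 1$, i.e.\ that every effective $\mathbb{R}$-divisor $D\equiv -K_{S_d}$ gives a log canonical pair. This is not true: for a smooth cubic surface the $\alpha$-invariant is $2/3$ or $3/4$ (depending on whether there is an Eckardt point), and for degrees $1$ and $2$ it is at most $5/6$ --- see \cite{Ch07a}. So there is no contradiction to extract from the mere fact that $(S_d,D)$ is not log canonical. (Incidentally, your limit argument is also unnecessary and not quite right: $K_{S_d}+D\equiv 0$ is already pseudo-effective, so Remark~\ref{observation} applies directly; and ``$(S,(1+\varepsilon)D)$ not lc for all $\varepsilon>0$'' does not force $(S,D)$ to be non-lc --- think of a strictly lc pair.)

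The actual argument, from \cite{CheltsovParkWon} and reflected in this paper as Theorem~\ref{theorem:del-Pezzo-tigers}, is finer: once $(S_d,D)$ is not log canonical at the base point $P$, one shows there exists $T\in|-K_{S_d}|$ with $(S_d,T)$ not lc at $P$ and $\mathrm{Supp}(T)\subset\mathrm{Supp}(D)$. One then runs a ``tiger subtraction'' $D_\mu=(1+\mu)D-\mu T$ (exactly as in the proofs of Theorems~\ref{theorem:dp1-2-rank-1} and~\ref{theorem:dp1-rank-2}) to strip $T$ out of the support while staying in the anticanonical class, eventually reaching a divisor whose support no longer contains any such $T$, contradicting Theorem~\ref{theorem:del-Pezzo-tigers}. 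The degree bound $d\leqslant 3$ enters through Lemma~\ref{lemma:dp1-2-3} and the fact that Theorem~\ref{theorem:del-Pezzo-tigers} holds only for $d\leqslant 3$.

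For the ``if'' direction, your monotonicity reduction is confused about direction and is in any case unnecessary. The paper's Example~\ref{example:KPZ-actions-4-9} handles all $d\geqslant 4$ uniformly: since $9-d\leqslant 5$, all blown-up points lie on a conic $C$; take a tangent line $L$ and check that $(1+\varepsilon)\tilde C+(1-2\varepsilon)\tilde L+\varepsilon\sum E_i\equiv -K_{S_d}$ with all coefficients positive for small $\varepsilon>0$. This is a two-line construction and does not require a separate analysis of $S_4$ via conic bundles.
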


Theorem~\ref{theorem:smooth-del-Pezzos-degree-1-2-3} has  been generalised in \cite{CheltsovParkWon2} as follows:

\begin{theorem}
\label{theorem:singular-del-Pezzos} Let $S_d$ be a del Pezzo surface of degree $d$ with  at most du Val singularities.
The set $\mathrm{Amp}^{cyl}(S_d)$ contains the anticanonical class  if and only if one of the following conditions holds:
\begin{itemize}
\item $d\geqslant 4$,

\item $d=3$ and $S_d$ is singular,

\item $d=2$ and $S_d$ has a singular point that is not of type $\mathrm{A}_1$,

\item $d=1$ and $S_d$ has a singular point that is not of type $\mathrm{A}_1$, $\mathrm{A}_2$, $\mathrm{A}_3$, or $\mathrm{D}_4$.
\end{itemize}
\end{theorem}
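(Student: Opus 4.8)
The plan is to treat the two directions separately, and within the "only if" direction to use the criterion distilled in Remark~\ref{observation}, while for the "if" direction to exhibit, case by case, an explicit anticanonical polar cylinder on each del Pezzo surface of the listed types.

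For the "only if" direction, suppose $-K_{S_d}\in\mathrm{Amp}^{cyl}(S_d)$, so there is an anticanonical polar cylinder $U=S_d\setminus\bigcup_{i=1}^n C_i$ with $-K_{S_d}\equiv\sum_{i=1}^n\lambda_i C_i$ for positive reals $\lambda_i$. The first step is to reduce to the case where all $\lambda_i<1$: if some $\lambda_i\geqslant 1$, then $K_{S_d}+\sum\lambda_i C_i$ is pseudo-effective (indeed effective, being $\qsim$ to a sum of curves minus a fraction of an ample class can be handled, but more simply one observes $(1-\lambda_i)C_i\geqslant$ the rest has the wrong sign—so instead one runs the argument of Remark~\ref{observation} directly), and then $(S_d,\sum\lambda_i C_i)$ fails to be log canonical at some point $p$. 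I would then combine the non-log-canonicity at $p$ with the classification of du Val singularities: the key numerical input is that $\sum\lambda_i C_i\equiv -K_{S_d}$, so $\big(\sum\lambda_i C_i\big)^2=d$, which severely limits how singular the pair can be away from and at $p$. A local intersection estimate (Noether-type inequality at $p$, using that the $C_i$ through $p$ have $\mathrm{mult}_p\geqslant 1$ and $\sum_{p\in C_i}\lambda_i\,\mathrm{mult}_p \geqslant$ something forced by non-log-canonicity, e.g. $\geqslant 2$ if $p$ is smooth on $S_d$, and a smaller bound governed by the local class group if $p$ is a du Val point) then yields, for small $d$, that $p$ must be a du Val singular point of sufficiently large "capacity"—precisely excluding $\mathrm{A}_1$ in degrees $2$ and $3$, and excluding $\mathrm{A}_1,\mathrm{A}_2,\mathrm{A}_3,\mathrm{D}_4$ in degree $1$. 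For $d\geqslant 4$ there is nothing to prove in this direction, and Theorem~\ref{theorem:smooth-del-Pezzos-degree-1-2-3} already disposes of the smooth cases in degrees $1,2,3$, so the content is entirely the singular low-degree analysis.

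For the "if" direction, the strategy is explicit construction. In each listed case I would pick a distinguished singular point $p$ of the prescribed type, take a suitable pencil of curves through $p$ (for $d=3$, lines through an appropriately chosen point or point on a line on the cubic surface; for $d=2$ and $d=1$, the relevant members of $|-K_{S_d}|$ or its pull-backs, or conics/cuspidal members adapted to the singularity), resolve to obtain a $\mathbb{P}^1$-fibration $\phi\colon\tilde S\to\mathbb{P}^1$, and identify a fibre $\tilde F$ whose removal together with a section and the exceptional curves leaves an $\mathbb{A}^1$-bundle over an affine curve. Pulling back one runs the computation of the displayed multi-line formulas in reverse: one checks that the boundary curves $C_i$ supporting the complement satisfy $\sum\lambda_i C_i\qsim -K_{S_d}$ with all $\lambda_i>0$, using that on a log del Pezzo surface numerical and $\mathbb{Q}$-linear equivalence agree (as noted after \eqref{equation:num-equiv}). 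The delicate point here is engineering the coefficients: one needs the section $\tilde\Gamma$ and the dropped fibre to be arranged so that back on $S_d$ the class is exactly anticanonical with strictly positive coefficients, which forces careful choice of which point and which pencil in each Dynkin type.

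The main obstacle I anticipate is the "only if" direction for $d=1$: the list of excluded types there ($\mathrm{A}_1,\mathrm{A}_2,\mathrm{A}_3,\mathrm{D}_4$) is long and the threshold is subtle, so the local log-canonicity/intersection estimate must be pushed to be sharp—one really needs to know, for each du Val type, the precise largest coefficient $c<1$ for which $(S_1, c\cdot(\text{anticanonical curve through }p))$ can fail log canonicity at $p$, and to match this against $(-K_{S_1})^2=1$. This will require a somewhat intricate case analysis over the possible configurations of $(-1)$-curves and $(-2)$-curves through $p$ on the minimal resolution, together with a careful application of inversion of adjunction on the exceptional locus. The construction side, by contrast, I expect to be laborious but routine once the right pencils are identified, falling back on the technique already implicit in diagram~\eqref{diagram} and Remark~\ref{observation}.
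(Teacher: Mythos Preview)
The paper does not prove Theorem~\ref{theorem:singular-del-Pezzos}; it is quoted from \cite{CheltsovParkWon2} as a known generalisation of Theorem~\ref{theorem:smooth-del-Pezzos-degree-1-2-3}. There is therefore no proof in the present paper to compare your proposal against.

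That said, your outline is in the right spirit but is imprecise at the decisive step of the ``only if'' direction. The obstruction coming from Remark~\ref{observation} is not merely that $(S_d,D)$ fails to be log canonical for some $D\equiv -K_{S_d}$ supported on the boundary; the present paper explicitly flags (in the paragraph following Example~\ref{example:McKernan}) that for du Val del Pezzo surfaces one needs the \emph{finer} obstruction from \cite[Remark~3.8]{CheltsovParkWon2}: the existence of a tiger whose support does not contain the support of any effective anticanonical divisor. Your proposed ``local intersection estimate'' and ``Noether-type inequality'' at the non-log-canonical point do not by themselves separate, say, $\mathrm{A}_3$ from $\mathrm{A}_4$ in degree $1$; what actually does the work in \cite{CheltsovParkWon2} is a case analysis on the minimal resolution that, given a putative tiger $D$, produces an anticanonical member $T$ with $\mathrm{Supp}(T)\subset\mathrm{Supp}(D)$ and then uses a rescaling trick (as in the proofs of Theorems~\ref{theorem:dp1-2-rank-1} and~\ref{theorem:dp1-rank-2} here) to reach a contradiction. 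Your sketch of the ``if'' direction---explicit pencils through a bad singular point, resolved to $\mathbb{P}^1$-fibrations with carefully chosen sections and fibres---is indeed how the constructions go, and is consistent with the examples in Section~\ref{section:cylinders}.
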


In \cite{CheltsovParkWon}  and \cite{CheltsovParkWon2} we have witnessed several vague pieces of  evidence  for the supposition that a cylinder polarized by an ample divisor can be obtained by manipulating an anticanonically polarized cylinder, if any, on a log del Pezzo surface.
\begin{conjecture}
\label{conjecture:Du-Val} A  log del Pezzo surface $S$ has a $(-K_{S})$-polar cylinder if
and only if $\mathrm{Amp}^{cyl}(S)=\mathrm{Amp}(S)$.
\end{conjecture}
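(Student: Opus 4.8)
The ``if'' implication of Conjecture~\ref{conjecture:Du-Val} is immediate: since $S$ is del Pezzo the class $-K_S$ is ample, so $\mathrm{Amp}^{cyl}(S)=\mathrm{Amp}(S)$ forces $-K_S\in\mathrm{Amp}^{cyl}(S)$, i.e. $S$ carries a $(-K_S)$-polar cylinder. The content is the converse, and I would split it into a soft reduction and a hard construction. For the soft part, note that $\mathrm{Amp}^{cyl}(S)$ is open in $\mathrm{Amp}(S)$: if $U=S\setminus\bigcup_{i=1}^nC_i$ is an $H$-polar cylinder then, by \eqref{equation:num-equiv}, $H\equiv\sum_{i=1}^n\lambda_iC_i$ with every $\lambda_i>0$, and since $\mathrm{Cl}(U)=0$ the classes $C_1,\dots,C_n$ span $\mathrm{Pic}(S)\otimes\mathbb{R}$; picking a basis among them, every class $H'$ close to $H$ can be written $\sum_{i=1}^n\lambda_i'C_i$ with $\lambda_i'$ close to $\lambda_i$ and hence still positive, so the very same $U$ is $H'$-polar. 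As $\mathrm{Amp}(S)$ is a connected cone and, under the hypothesis, $-K_S\in\mathrm{Amp}^{cyl}(S)\neq\varnothing$, it is therefore enough to show that $\mathrm{Amp}^{cyl}(S)$ is closed in $\mathrm{Amp}(S)$; equivalently, starting from a single $(-K_S)$-polar cylinder one must manufacture, for every ample $H$, an $H$-polar cylinder.

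For the construction I would exploit the $\mathbb{A}^1$-fibration attached to the given cylinder as in diagram~\eqref{diagram}. A $(-K_S)$-polar cylinder $U\cong\mathbb{A}^1\times Z$ produces, on the resolution $\pi\colon\tilde S\to S$, a $\mathbb{P}^1$-fibration $\phi\colon\tilde S\to\mathbb{P}^1$ whose section $\tilde\Gamma$ is the proper transform of one of the boundary curves $C_i$ or a $\pi$-exceptional curve $E_r$, while every other curve among the $\tilde C_i$ and $E_j$ is a component of a fibre of $\phi$; moreover, since $\sum\lambda_iC_i\equiv-K_S$ the class $K_S+\sum\lambda_iC_i$ is numerically trivial, hence pseudo-effective, so by Remark~\ref{observation} the pair $\big(S,\sum\lambda_iC_i\big)$ is not log canonical and $\psi$ can be a morphism only if some $\lambda_i\geqslant2$ --- constraints that any such cylinder satisfies and that must be carried through the argument. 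The plan is then to generate a large supply of cylinders in $S$ by (i) contracting some components of a reducible fibre of $\phi$ and deleting a different admissible subset of the ones that remain, and (ii) performing elementary transformations of $\phi$ (blow up a point of a fibre, contract the old fibre component) in order to replace the section, hence the curve that descends to the ``section part'' of the boundary. Each cylinder $U_\alpha=S\setminus\bigcup_jD^{(\alpha)}_j$ obtained this way spans a cone $\sigma_\alpha=\mathrm{Cone}\big(D^{(\alpha)}_j\big)$, and what has to be proved is the covering statement $\bigcup_\alpha\mathrm{int}\,\sigma_\alpha\supseteq\mathrm{Amp}(S)$.

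The main obstacle is precisely this covering statement --- controlling the manipulations well enough to reach the ample classes near the walls of $\mathrm{Amp}(S)$. Three points must be settled: (a) which fibre components of $\phi$ are ``movable'', i.e. may be inserted into or removed from the boundary while the complement stays a cylinder (roughly those contractible over $\mathbb{P}^1$), together with a guarantee that enough of them exist; (b) the exact effect of an elementary transformation on the section, with the caveat that all intermediate surfaces must remain log del Pezzo and that the transformations interact delicately with the quotient singularities of $S$; and (c) the combinatorics of the resulting cones $\sigma_\alpha$. For (c) I would induct on $\mathrm{rank}\,\mathrm{Pic}(S)$: contract a $K_S$-negative extremal curve to a log del Pezzo surface $S'$ of smaller Picard rank (treating a conic bundle contraction, and $\mathrm{rank}\,\mathrm{Pic}(S)=1$, as base cases alongside the smooth and du Val classifications of Theorems~\ref{theorem:smooth-del-Pezzos-degree-1-2-3} and~\ref{theorem:singular-del-Pezzos}), push the $(-K_{S'})$-polar cylinder down, apply the inductive hypothesis on $S'$, and lift the resulting family back to $S$ along the contraction. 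I expect the genuine difficulty to concentrate in this lifting step and in the ``rigid'' cases, where the given $(-K_S)$-polar cylinder has almost no movable fibre components, so that a single cone $\sigma$ falls far short of $\mathrm{Amp}(S)$ and one is forced to change the fibration $\phi$ itself.
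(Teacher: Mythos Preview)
The statement is labelled a \emph{Conjecture} in the paper and is not proved there in general; the only case settled is Corollary~\ref{corollary-evidence}, the smooth case. That corollary is obtained not by an abstract mechanism of the kind you sketch but by a dichotomy on the degree: for a smooth del Pezzo surface $S_d$ with $d\leqslant 3$ there is no $(-K_{S_d})$-polar cylinder (Theorem~\ref{theorem:smooth-del-Pezzos-degree-1-2-3}), so the forward implication is vacuous; for $d\geqslant 4$ the paper exhibits, for every ample class $H$, an explicit effective divisor with cylindrical complement (Theorem~\ref{theorem:main-easy}~(1), proved via the hand-built constructions of Lemmas~\ref{lemma:deg-3-cylinders-birational-1}, \ref{lemma:deg-3-cylinders-birational-2}, \ref{lemma:deg-3-cylinders-conic-bundle}, each drawing on the explicit cylinders of Subsection~\ref{section:cylinders}). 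No connectedness or closure argument for $\mathrm{Amp}^{cyl}$ appears.

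Your write-up is a programme, not a proof, and you say as much (``I expect the genuine difficulty to concentrate in~\dots''). The openness of $\mathrm{Amp}^{cyl}(S)$ in $\mathrm{Amp}(S)$ is correct and easy; everything beyond that --- the covering $\bigcup_\alpha\mathrm{int}\,\sigma_\alpha\supseteq\mathrm{Amp}(S)$, the control of elementary transformations through quotient singularities, the lifting in your Picard-rank induction --- is left open. One concrete gap already at the first inductive step: if $f\colon S\to S'$ contracts an extremal curve $E$, the pushforward $f_*D$ is numerically equivalent to $-K_{S'}$, but $S'\setminus\mathrm{Supp}(f_*D)$ is a cylinder only when $E\subset\mathrm{Supp}(D)$, and nothing in the hypothesis guarantees that any extremal curve lies in the boundary of the given $(-K_S)$-polar cylinder; so the inductive hypothesis on $S'$ may simply be unavailable. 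As the conjecture remains open beyond the smooth case, no argument along these lines is currently known to close.
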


\section{Main Results}

\subsection{Fujita invariant}
To investigate the cones of cylindrical ample divisors on log del Pezzo surfaces, we adopt the concept, so-called, the Fujita invariant of a log pair 
defined in \cite[Definition~2.2]{HTT}. This was
originally introduced by T.~Fujita, disguised as its negative value and  under the name Kodaira energy (\cite{F87}, \cite{F92}, \cite{F96},  \cite{F97} ).  This plays  essential roles in Manin's conjecture (see, for example, \cite{BM90}, \cite{HTT}).

Let $S$ be a log del Pezzo surface and $A$ be a big
$\mathbb{R}$-divisor on $S$.  It follows from Cone Theorem (see, for instance, \cite[Theorem~3.7]{KoMo}) that the Mori cone  $\overline{\mathbb{NE}}(S)$ of the surface $S$ is polyhedral.
\begin{definition}
For the log pair $(S, A)$, we define the Fujita invariant of~$(S,A)$ by
$$
\mu_A:=\mathrm{inf}\left\{\lambda\in\mathbb{R}_{>0}\ \Big|\ \text{the $\mathbb{R}$-divisor}\ K_{S}+\lambda A\ \text{is pseudo-effective}\right\}.%
$$
 The smallest extremal  face  $\Delta_{A}$ of the Mori cone $\overline{\mathbb{NE}}(S)$ that contains $K_{S}+\mu_A A$ is called the Fujita face of $A$. The Fujita rank of $(S,A)$  is defined by
$
r_A:=\dim \Delta_{A}.%
$
Note that  $r_A=0$ if and only if $-K_S\equiv \mu_AA$.
\end{definition}

\begin{remark}
In \cite[Definition~2.2]{HTT}, B.~Hassett, Sh.~Tanimoto, and Yu.~Tschinkel define the Fujita invariants  only for $\mathbb{Q}$-factorial varieties with canonical singularities.
For general varieties, they define the Fujita invariants by taking the pull-backs to smooth models (\cite{LTT}). 
%However, in the present article, we do not take pull-backs mainly because we will use the Fujita invariants only for smooth del Pezzo surfaces.
\end{remark}

 Let $\phi_A\colon S\to
Z$ be the contraction given by the Fujita face $\Delta_{A}$ of the divisor $A$. Then either~$\phi_A$ is a
birational  morphism or a conic bundle with
$Z\cong\mathbb{P}^1$ (see, for instance, \cite[Subsection~8.2.6]{D12}). 
In the former case, the $\mathbb{R}$-divisor $A$ is said to be of type $B(r_A)$ and in the latter case
it is said to be of type $C(r_A)$.

Now we suppose that $S$ is 
 a smooth del Pezzo surface of degree $d\leqslant 7$. Then the Mori cone  $\overline{\mathbb{NE}}(S)$ of the surface $S$ is  generated by all the $(-1)$-curves in $S$ (see \cite[Theorem~8.2.23]{D12}).  Let $H$ be an ample $\mathbb{R}$-divisor on $S$.

If $H$ is of type $B(r_H)$, then  its Fujita face $\Delta_H$ is generated by $r_H$ 
disjoint $(-1)$-curves contracted by $\phi_H$, where  $r_H\leqslant 9-d$. If $H$ is of type $C(r_H)$, then
$\Delta_{H}$ is generated by the $(-1)$-curves in the $(8-d)$ reducible fibers of $\phi_H$. Each reducible fiber 
consists of two $(-1)$-curves that intersect transversally at a single
point.

Suppose that $H$ is
of type $B(r_H)$. Let $E_1,\ldots,E_{r_H} $   be all the $(-1)$-curves contained in
$\Delta_{H}$.  These are disjoint and generate the Fujita face $\Delta_{H}$. Therefore, 
\begin{equation}\label{remark:curves-in-face-birational}
K_{S}+\mu_H H\equiv\sum_{i=1}^{r_H}a_iE_i
\end{equation}
for some positive real numbers $a_1,\ldots,a_{r_H}$.
We have $a_i<1$ for every $i$ because  $H\cdot E_i>0$. Vice versa, for
every positive real numbers $b_1,\ldots,b_{r_H}<1$, the divisor
$$
-K_{S}+\sum_{i=1}^{r_H}b_iE_i
$$
is ample. The set of all ample $\mathbb{R}$-divisor classes of type $B(r_H)$ in $\mathrm{Pic}(S)\otimes \mathbb{R}$ 
is denoted by~$\mathrm{Amp}^{B}_{r_H}(S)$.

 Suppose that $H$ is
of type $C(r_H)$.  Note that $r_H=9-d$. There are a $0$-curve $B$ and $(8-d)$ disjoint $(-1)$-curves $E_1, E_2, E_3, \ldots, E_{8-d}$,  each of which is contained in a distinct fiber of $\phi_H$,
 such that
\begin{equation}\label{lemma:curves-in-face} 
K_{S}+\mu_H H\equiv aB+\sum_{i=1}^{8-d}a_iE_i
\end{equation}
for some positive real number $a$ and non-negative real numbers $a_1, \ldots, a_{8-d}<1$.
In particular, these curves generate the Fujita face $\Delta_{H}$.
Vice versa, for  every positive real number
$b$ and non-negative real numbers $b_1, \ldots, b_{8-d}<1$
 the divisor
$$
-K_{S}+bB+\sum_{i=1}^{8-d}b_iE_i
$$
is ample.

In the case of type $C(r_H)$, put $\ell_H=|\{a_i| a_i\ne 0\}|$. The $\mathbb{R}$-divisor $H$ is said to be of length
$\ell_H$.
The set of all ample $\mathbb{R}$-divisor classes of type $C(r_H)$  with length $\ell_H$ in $\mathrm{Pic}(S)\otimes \mathbb{R}$  is denoted by 
 $\mathrm{Amp}^{C}_{\ell_H}(S)$.
 It is clear that
 \[\mathrm{Amp}(S)=\bigcup_{\ell=0}^{8-d}\mathrm{Amp}^{C}_{\ell}(S)\cup
 \bigcup_{r=0}^{9-d}\mathrm{Amp}^{B}_{r}(S).\]
Note that
$\mathrm{Amp}^{B}_{0}(S)$ is the ray generated by the anticanonical class $[-K_S]$.

\subsection{Main Theorems}
The goal of the present article is to study the cones of cylindrical ample divisors of  smooth del Pezzo surfaces. This continues the work of T.~Kishimoto, Yu.~Prokhorov, and M.~Zaidenberg in \cite{KPZ12b}
and the work of I.~Cheltsov, J.~Park, and J.~Won in \cite{CheltsovParkWon} and \cite{CheltsovParkWon2}.

\begin{theorem}
\label{theorem:main-easy} Let $S_d$ be a smooth del Pezzo surface of degree $d$.
\begin{enumerate}
\item For $4\leqslant d\leqslant 9$, 
$$\mathrm{Amp}^{cyl}(S_d)=\mathrm{Amp}(S_d).$$

\item For $d=3$, 
$$\mathrm{Amp}^{cyl}(S_3)=\mathrm{Amp}(S_3)\setminus\mathrm{Amp}^{B}_{0}(S_3),$$
that is, any ample polarization  $H$ of $S_3$ admits an $H$-polar cylinder unless $H\equiv \alpha(-K_{S_3})$ for some $\alpha>0$. \end{enumerate}
\end{theorem}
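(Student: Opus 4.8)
The plan is to run through the chamber decomposition $\mathrm{Amp}(S_d)=\bigcup_{\ell=0}^{8-d}\mathrm{Amp}^{C}_{\ell}(S_d)\cup\bigcup_{r=0}^{9-d}\mathrm{Amp}^{B}_{r}(S_d)$ recalled above and, chamber by chamber, to exhibit cylinders whose polarizing classes fill it. The basic mechanism is a convexity remark: if $U\subseteq S_d$ is a fixed cylinder with $S_d\setminus U=C_1\cup\dots\cup C_n$, then the ample classes carrying an $H$-polar cylinder \emph{equal to} $U$ form the convex set $\mathrm{Amp}(S_d)\cap\{\sum_i\lambda_iC_i:\lambda_i>0\}$, so within a chamber it suffices to produce one, or finitely many, cylinders whose cones of this shape cover it. For the anticanonical ray $\mathrm{Amp}^{B}_{0}(S_d)$ I would just invoke Theorem~\ref{theorem:smooth-del-Pezzos-degree-1-2-3}: there is a $(-K_{S_d})$-polar cylinder if and only if $d\geqslant 4$, and rescaling the polarizing divisor propagates this to every class $\alpha(-K_{S_d})$. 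This settles $\mathrm{Amp}^{B}_{0}$ in both parts and, for $d=3$, gives the exclusion $\mathrm{Amp}^{B}_{0}(S_3)\cap\mathrm{Amp}^{cyl}(S_3)=\varnothing$.

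For $\mathrm{Amp}^{B}_{r}(S_d)$ with $r\geqslant 1$ the idea is to push down and pull back. If $H$ lies here, its Fujita contraction $\phi_H\colon S_d\to Z$ blows down $r$ disjoint $(-1)$-curves $E_1,\dots,E_r$, and a Nakai--Moishezon argument shows $Z$ is a smooth del Pezzo surface of degree $d+r\geqslant 4$ (using only $d\geqslant 3$). I would then pick an effective divisor $\overline{D}\equiv -K_Z$ with $Z\setminus\mathrm{Supp}(\overline{D})$ a cylinder and passing through each point $\phi_H(E_i)$; this is possible by taking the explicit anticanonical cylinders on del Pezzo surfaces of degree $\geqslant 4$ (for instance, on $\mathbb{P}^2$, a line together with a conic tangent to it) and using $\dim|-K_Z|=d+r>r$ to impose the point conditions. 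Pulling back yields the cylinder $S_d\setminus\bigl(\widetilde{\mathrm{Supp}(\overline{D})}\cup E_1\cup\dots\cup E_r\bigr)\cong Z\setminus\mathrm{Supp}(\overline{D})$, and since $-K_{S_d}=\phi_H^*(-K_Z)-\sum_iE_i$ with $\overline{D}$ passing through all the $\phi_H(E_i)$, the cone spanned by the components of $\widetilde{\mathrm{Supp}(\overline{D})}$ and by $E_1,\dots,E_r$ is readily seen to contain the whole sub-chamber $\bigl\{t(-K_{S_d})+\sum_ic_iE_i:t>0,\ c_i>0\bigr\}$. Running over the finitely many $r$-element sets of disjoint $(-1)$-curves covers $\mathrm{Amp}^{B}_{r}(S_d)$.

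For $\mathrm{Amp}^{C}_{\ell}(S_d)$ the Fujita contraction $\phi_H\colon S_d\to\mathbb{P}^1$ is a conic bundle with the $8-d$ reducible fibres $F_i=C_i+C_i'$. I would fix a section $\Sigma$, with $\Sigma\cdot C_i=1$, and contract $C_1,\dots,C_{8-d}$ to a Hirzebruch surface $\mathbb{F}_n$ on which the image $\Sigma'$ of $\Sigma$ is a section containing all the images of the $C_i$; then $S_d\setminus\bigl(\Sigma\cup C_1\cup\dots\cup C_{8-d}\cup F^{(1)}\cup\dots\cup F^{(m)}\bigr)\cong\mathbb{F}_n\setminus\bigl(\Sigma'\cup(\text{fibres})\bigr)\cong\mathbb{A}^1\times(\mathbb{P}^1\setminus\{m\ \text{points}\})$ for any $m\geqslant 1$ general smooth fibres, a cylinder. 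The resulting cone is the simplicial cone on $\Sigma,C_1,\dots,C_{8-d}$ and the fibre class $F$ (enlarged if one also removes some of the $C_i'$), and letting $\Sigma$, the choice of component in each reducible fibre, and the conic bundle structure vary, a combinatorial check against the Mori cone of $S_d$ shows these cones exhaust $\mathrm{Amp}^{C}_{\ell}(S_d)$ for every $\ell$.

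Putting the chambers together: for $4\leqslant d\leqslant 9$ every chamber of $\mathrm{Amp}(S_d)$ is hit, so $\mathrm{Amp}^{cyl}(S_d)=\mathrm{Amp}(S_d)$; for $d=3$ every chamber except $\mathrm{Amp}^{B}_{0}(S_3)$ is hit while $\mathrm{Amp}^{B}_{0}(S_3)\cap\mathrm{Amp}^{cyl}(S_3)=\varnothing$, giving $\mathrm{Amp}^{cyl}(S_3)=\mathrm{Amp}(S_3)\setminus\mathrm{Amp}^{B}_{0}(S_3)$. The step I expect to be hardest is this last coverage claim in the $B(r\geqslant 1)$ and $C(\ell)$ cases: exhibiting one cylinder per chamber is routine, but checking that its supporting cone — together with the variants obtained from the other available choices of section, contracted curves, or underlying fibration — fills the \emph{entire} chamber requires the convexity remark combined with degree-by-degree bookkeeping on the Mori cone, and in the $B(r)$ case one must additionally secure, on each contraction target $Z$, an anticanonical cylinder whose boundary divisor passes through the prescribed points.
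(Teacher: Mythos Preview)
Your overall plan matches the paper's: decompose $\mathrm{Amp}(S_d)$ by Fujita type and build cylinders chamber by chamber, quoting Theorem~\ref{theorem:smooth-del-Pezzos-degree-1-2-3} for the anticanonical ray. Your $B(r)$ argument is essentially the paper's Lemma~\ref{lemma:deg-3-cylinders-birational-1}, though the justification ``using $\dim|-K_Z|=d+r>r$ to impose the point conditions'' is not the right mechanism (anticanonical cylinders are not a linear system in which one can freely impose base points); what actually works is to contract all the way to $\mathbb{P}^2$ and choose the conic through five of the $9-d$ marked points and the tangent line through a sixth, exactly as the paper does.

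The genuine gap is in the $C(\ell)$ case. Your section-and-fibre cylinders do \emph{not} exhaust $\mathrm{Amp}^C_\ell(S_d)$ for $d\leqslant 5$, no matter how you vary $\Sigma$, the components $C_i$, or the conic bundle. Here is the obstruction for $d=3$. Fix a conic bundle with general fibre $F$ and reducible fibres $C_i+C_i'$ ($i=1,\dots,5$), and let $\Sigma$ be any section with $\Sigma\cdot C_i=1$. Writing $-K_{S_3}=\alpha\Sigma+\sum_i(\gamma_iC_i+\gamma_i'C_i')$, intersecting with $F$ gives $\alpha=2$; intersecting with $C_i$ and $C_i'$ gives $\gamma_i=\gamma_i'+1$; and intersecting with $\Sigma$ gives $\sum_i\gamma_i=2-\Sigma^2\leqslant 3$. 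But positivity of all $\gamma_i,\gamma_i'$ forces $\gamma_i>1$ for every $i$, hence $\sum_i\gamma_i>5$. Thus $-K_{S_3}$ lies \emph{outside} the closed cone generated by $\Sigma,C_1,\dots,C_5,C_1',\dots,C_5',F$, and so does $-K_{S_3}+aB+\sum a_iE_i$ for all sufficiently small $a,a_i>0$. The same computation shows the failure for $d=4$ (one needs $\sum\gamma_i>4$ but gets $\sum\gamma_i\leqslant 3$) and the borderline failure for $d=5$. In the paper this chamber is handled by an entirely different construction (Lemma~\ref{lemma:deg-3-cylinders-conic-bundle}): one contracts the $E_i$ to $\mathbb{P}^1\times\mathbb{P}^1$ or $\mathbb{F}_1$ and uses a conic together with a tangent line there, so that the cylinder boundary contains a curve of anticanonical type rather than just a section and fibre components. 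Your ``combinatorial check'' would in fact show the opposite of what you claim; you need a second family of cylinders to cover the region near $-K_{S_d}$.
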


A.~Perepechko verified that $\mathrm{Amp}^{cyl}(S_d)=\mathrm{Amp}(S_d)$ for $d\geqslant 5$ by showing 
that the ample cones of smooth del Pezzo surfaces of degrees at least $5$ are contained in the cones generated 
by components of a certain effective divisor the complement of the support of which is a cylinder (\cite[Subsection~3.2]{Pe13}).
However, his method cannot be fully generalised to del Pezzo surfaces of lower degrees. Indeed, he yielded
partial description of $\mathrm{Amp}^{cyl}(S_4)$ (\cite[Theorem~7]{Pe13}) and thereafter J.~Park and J.~Won showed 
that $\mathrm{Amp}^{cyl}(S_4)=\mathrm{Amp}(S_4)$ using the same idea as in the present article (\cite{PW16}).

The proof of Theorem~\ref{theorem:main-easy} is given in Subsection~\ref{section:big-degree}.

\begin{corollary}
\label{corollary-evidence} Conjecture~\ref{conjecture:Du-Val} holds for smooth del Pezzo surfaces.\end{corollary}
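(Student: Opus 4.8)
The plan is to deduce the corollary from Theorems~\ref{theorem:smooth-del-Pezzos-degree-1-2-3} and~\ref{theorem:main-easy} by a short case analysis on the degree $d$; essentially no new argument is required. A smooth del Pezzo surface is in particular a log del Pezzo surface, so Conjecture~\ref{conjecture:Du-Val} applies to $S=S_d$. The one preliminary observation is that $\mathrm{Amp}^{cyl}(S)$ is stable under scaling by positive reals: if $U=S\setminus\mathrm{Supp}(D)$ with $D$ effective and $D\equiv M$, then for every $\alpha>0$ the divisor $\alpha D$ is effective, $\alpha D\equiv\alpha M$, and $\mathrm{Supp}(\alpha D)=\mathrm{Supp}(D)$, so $U$ is also an $(\alpha M)$-polar cylinder. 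Since $-K_{S_d}$ is ample, it follows that $S_d$ admits a $(-K_{S_d})$-polar cylinder if and only if $[-K_{S_d}]\in\mathrm{Amp}^{cyl}(S_d)$, and by Theorem~\ref{theorem:smooth-del-Pezzos-degree-1-2-3} this holds precisely when $d\geqslant 4$.

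I would then verify the two implications of Conjecture~\ref{conjecture:Du-Val} in turn. For the forward implication, assume $\mathrm{Amp}^{cyl}(S_d)=\mathrm{Amp}(S_d)$; as $-K_{S_d}$ is ample we get $[-K_{S_d}]\in\mathrm{Amp}^{cyl}(S_d)$, i.e. $S_d$ has a $(-K_{S_d})$-polar cylinder. For the reverse implication, assume $S_d$ has a $(-K_{S_d})$-polar cylinder; by the equivalence recorded above this forces $d\geqslant 4$, and then Theorem~\ref{theorem:main-easy}(1) yields $\mathrm{Amp}^{cyl}(S_d)=\mathrm{Amp}(S_d)$, as wanted. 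This settles the statement for $d\geqslant 4$.

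It remains to confirm that for $d\leqslant 3$ both sides of the equivalence in Conjecture~\ref{conjecture:Du-Val} are false, so that the biconditional still holds with nothing further to prove. For $d=3$, Theorem~\ref{theorem:main-easy}(2) gives $\mathrm{Amp}^{cyl}(S_3)=\mathrm{Amp}(S_3)\setminus\mathrm{Amp}^{B}_{0}(S_3)$, which omits the anticanonical ray; for $d\leqslant 2$, Theorem~\ref{theorem:smooth-del-Pezzos-degree-1-2-3} rules out any $(-K_{S_d})$-polar cylinder. In either case $[-K_{S_d}]\in\mathrm{Amp}(S_d)\setminus\mathrm{Amp}^{cyl}(S_d)$, hence $\mathrm{Amp}^{cyl}(S_d)\neq\mathrm{Amp}(S_d)$ and $S_d$ has no $(-K_{S_d})$-polar cylinder, so the equivalence holds vacuously. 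Since all the substantive input is already in place, there is no genuine obstacle; the only point requiring a little care is that Theorem~\ref{theorem:main-easy} says nothing about $d=1,2$, so those degrees must be handled through the non-existence statement of Theorem~\ref{theorem:smooth-del-Pezzos-degree-1-2-3} rather than through a description of the cone $\mathrm{Amp}^{cyl}(S_d)$.
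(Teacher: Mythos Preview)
Your argument is correct and matches the paper's own reasoning: the paper does not give a separate proof of the corollary, but immediately after stating it remarks that for degrees $d\leqslant 3$ there is nothing to check since those surfaces already lack a $(-K_S)$-polar cylinder, so only Theorem~\ref{theorem:main-easy}(1) is needed. Your write-up makes this explicit; the only superfluous step is invoking Theorem~\ref{theorem:main-easy}(2) for $d=3$, since Theorem~\ref{theorem:smooth-del-Pezzos-degree-1-2-3} alone already shows both sides of the biconditional fail there.
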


In order to analyze Conjecture~\ref{conjecture:Du-Val} for smooth del Pezzo surfaces we do not have to study 
the sets $\mathrm{Amp}^{cyl}(S)$ for del Pezzo surfaces of degrees $\leqslant 3$ since these surfaces do not have $(-K_S)$-polar cylinders already. Only Theorem~\ref{theorem:main-easy}~(1) is required. Meanwhile,
Theorem~\ref{theorem:main-easy}~(2) completely describes the set $\mathrm{Amp}^{cyl}(S)$ for smooth del Pezzo surfaces of degree 3.
After \cite{CheltsovParkWon} resolved the question whether  the anticanonical class lies in $\mathrm{Amp}^{cyl}(S)$ or not for a smooth cubic surface $S$,
Yu.~Prokhorov proposed a more general problem: 
\begin{center}
\emph{To describe the cones of cylindrical ample divisors of smooth cubic surfaces.}\\
\end{center}
Theorem~\ref{theorem:main-easy}~(2) gives the complete answer to Yu.~Prokhorov's problem. It is however natural that Yu.~Prokhorov's problem should be extended to smooth del Pezzo surfaces of degrees $\leqslant 2$. In the present article we also give some partial descriptions for them as follows:
\begin{theorem}
\label{theorem:main-non-existence} Let $S_d$ be a smooth del Pezzo surface
of degree $d\leqslant 3$ and $H$ be an ample $\mathbb{R}$-divisor of Fujita rank $r_H$ on $S_d$. 
If $r_H\leqslant 3-d$, then no $H$-polar cylinder exists on $S_d$.

\end{theorem}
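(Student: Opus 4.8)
The plan is to combine the obstruction recorded in Remark~\ref{observation} with the sharp numerical restrictions that the bound on $r_H$ imposes on an $H$-polar divisor. First, a divisor of type $C(r_H)$ has $r_H=9-d>3-d$, so the hypothesis forces $H$ to be of type $B(r_H)$, and by \eqref{remark:curves-in-face-birational} there are disjoint $(-1)$-curves $E_1,\dots,E_{r_H}$ on $S_d$ and reals $a_i\in(0,1)$ with $K_{S_d}+\mu_H H\equiv A:=\sum_{i=1}^{r_H}a_iE_i$. If $r_H=0$ then $H$ is proportional to $-K_{S_d}$ and the assertion is already contained in Theorem~\ref{theorem:smooth-del-Pezzos-degree-1-2-3}, so from now on I assume $r_H\geqslant 1$, whence $d\leqslant 2$. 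For any effective $\mathbb{R}$-divisor $D\equiv\mu_H H$ one has $D\equiv-K_{S_d}+A$, so, using $E_i^2=K_{S_d}\cdot E_i=-1$ and disjointness,
\[
D^2=d+\sum_{i=1}^{r_H}a_i(2-a_i)<d+r_H\leqslant 3,\qquad -K_{S_d}\cdot D=d+\sum_{i=1}^{r_H}a_i<d+r_H\leqslant 3.
\]
In particular, since $-K_{S_d}$ is ample and integral, every irreducible component $C$ of such a $D$ satisfies $1\leqslant -K_{S_d}\cdot C\leqslant 2$.

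Now suppose, for contradiction, that $S_d$ carries an $H$-polar cylinder $U$; rescaling, write $\mu_H H\equiv D=\sum_{j=1}^n\lambda_jC_j$ with $\lambda_j>0$ and $U=S_d\setminus\bigcup_jC_j$. Since $K_{S_d}+D\equiv A$ is effective, Remark~\ref{observation} shows that $(S_d,D)$ is not log canonical. Form the diagram \eqref{diagram}: a birational morphism $\pi\colon\widetilde S\to S_d$ resolving $\psi$, a $\mathbb P^1$-fibration $\phi\colon\widetilde S\to\mathbb P^1$, its section $\widetilde\Gamma$, and the image $F=\pi(\widetilde F)$ of a general fibre. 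As in the introduction $(K_{S_d}+D)\cdot F=A\cdot F\geqslant 0$; since $(K_{S_d}+D)\cdot F=-2+w$, where $w=\lambda_1$ when $\widetilde\Gamma=\widetilde C_1$ (i.e.\ $\psi$ is a morphism) and $w=\mu_r$ when $\widetilde\Gamma=E_r$ is $\pi$-exceptional, we conclude $w\geqslant 2$ in either case.

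Consider first the case where $\psi$ is a morphism; take $\pi=\mathrm{id}$. Then $\psi\colon S_d\to\mathbb P^1$ has general fibre $F$ with $F^2=0$ and $-K_{S_d}\cdot F=2$, and $C_1$ is a section of $\psi$, hence a smooth rational curve with $C_1\cdot F=1$ and $\lambda_1=D\cdot F=2+A\cdot F\geqslant 2$. By the bound above $-K_{S_d}\cdot C_1=1$, so $C_1^2=-1$; write $D=\lambda_1C_1+R$ with $R\geqslant 0$. Then $R\cdot C_1=D\cdot C_1+\lambda_1>\lambda_1$, so
\[
D^2=-\lambda_1^2+2\lambda_1(R\cdot C_1)+R^2>\lambda_1^2+R^2\geqslant 4+R^2.
\]
Since $-K_{S_d}\cdot R=-K_{S_d}\cdot D-\lambda_1<3-2=1$, the sum of the coefficients of $R$ is $<1$, hence so is the sum of their squares; as the only negative curves on $S_d$ are $(-1)$-curves, this gives $R^2\geqslant-\sum(\text{coeff})^2>-1$, so $D^2>3$, a contradiction.

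It remains to treat the case where $\psi$ is not a morphism, which is the main obstacle. Here $\widetilde\Gamma=E_r$ collapses under $\pi$ to a point $p_0\in\mathrm{Supp}(D)$ (as $\widetilde\Gamma$ is disjoint from the preimage of $U$), and $\mathrm{ord}_{\widetilde\Gamma}$ witnesses the failure of log canonicity at $p_0$, with $\mathrm{ord}_{\widetilde\Gamma}(\pi^*D)=\mu_r+\nu\geqslant 2+\nu$, where $\nu\geqslant 1$ is the discrepancy of $\widetilde\Gamma$; a general fibre $\widetilde F$ meets $\bigcup_j\widetilde C_j\cup\bigcup_kE_k$ only at $\widetilde F\cap\widetilde\Gamma$, so $F$ is smooth, passes through $p_0$, meets $\mathrm{Supp}(D)$ only there, and $D\cdot F=\mu_r+\nu\geqslant 3$. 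When $\pi$ is a single blow-up of $p_0$ this reads $\mult_{p_0}D\geqslant 3$, and intersecting $D$ with a general member of $|-K_{S_d}|$ through $p_0$ — which is smooth of multiplicity $1$ at $p_0$ and not a component of $D$ — yields $3\leqslant\mult_{p_0}D\leqslant D\cdot(-K_{S_d})<3$, absurd. In general one must exploit that $\widetilde\Gamma$ is a \emph{section} of $\phi$ to control the tower of infinitely near points that $\pi$ blows up over $p_0$, together with the way the curves $C_j$ distribute among the (at most $8-d$ reducible) fibres of $\phi$, and thereby force $\mult_{p_0}D\geqslant 3$ — equivalently a contradiction with $-K_{S_d}\cdot D<3$ — in every configuration. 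Ruling out the long blow-up chains is exactly where the argument becomes delicate, and where one expects to argue partly degree by degree for $d=1$ and $d=2$.
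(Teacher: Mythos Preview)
Your argument is incomplete, and the gap is precisely where you locate it yourself. In the case where $\psi$ is not a morphism, you only treat the situation in which $\pi$ is a single blow-up, and you explicitly defer the general case (``Ruling out the long blow-up chains is exactly where the argument becomes delicate''). Unfortunately this is the heart of the matter, not a technicality: when $\pi$ is a tower of blow-ups over $p_0$, the inequality $D\cdot F=m_r\geqslant 3$ bounds the local intersection number $(D\cdot F)_{p_0}$, not $\mult_{p_0}D$; these can differ by an arbitrarily large factor once components of $D$ are tangent to the pencil, which is exactly what a long blow-up chain encodes. There is no evident way to force $\mult_{p_0}D\geqslant 3$ from your set-up, and the claim that $F$ is smooth at $p_0$ is also unjustified (and can fail). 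Even your ``single blow-up'' sub-argument has a loose end in degree $1$: the member of $|-K_{S_1}|$ through $p_0$ is unique, and nothing prevents $p_0$ from being its singular point, so ``a general member through $p_0$ of multiplicity $1$'' may not exist.

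The paper's proof takes a completely different route that avoids any analysis of the blow-up tower. It uses Theorem~\ref{theorem:del-Pezzo-tigers}: once one knows $(S_d,D)$ is not log canonical at $P$, there is an actual anticanonical divisor $T\in|-K_{S_d}|$ with $\mathrm{Supp}(T)\subset\mathrm{Supp}(D)$ that is also not log canonical at $P$. One then runs an induction on $r_H$. First, the paper shows that the $(-1)$-curves $E_i$ cannot lie in $\mathrm{Supp}(D)$ (otherwise contracting $E_i$ produces a polar cylinder of smaller Fujita rank on a del Pezzo of higher degree, contradicting the inductive hypothesis / Theorem~\ref{theorem:smooth-del-Pezzos-degree-1-2-3}); this yields $a_i\leqslant 1$ and, by Remark~\ref{observation}, forces the pencil to have a base point $P$ away from the $E_i$. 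Then one forms the family $D_\mu=(1+\mu)D-\mu T$ and pushes $\mu$ to the threshold $\nu$ where some component of $T$ drops out of the support; either $(\overline{S},\overline{D}_\nu)$ is still not log canonical at $f(P)$ (contradicting Theorem~\ref{theorem:del-Pezzo-tigers} since $\mathrm{Supp}(\overline{T})\not\subset\mathrm{Supp}(\overline{D}_\nu)$), or along the way the coefficients $a_\mu,b_\mu$ in $D_\mu\equiv -K_S+a_\mu E+b_\mu F$ hit zero and one lands back in a case of smaller $r_H$. The key input your approach is missing is exactly this tiger theorem; without it, the ``$\psi$ not a morphism'' case does not close.
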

Theorem~\ref{theorem:main-non-existence} immediately shows that
 \[\mathrm{Amp}^{cyl}(S_d)\bigcap\left\{
 \bigcup_{r=0}^{3-d}\mathrm{Amp}^{B}_{r}(S_d)\right\}=\emptyset\]
 for a smooth del Pezzo surface $S_d$
of degree $d\leqslant 3$.  The proof of the theorem follows from  Theorems~\ref{theorem:dp1-2-rank-1}~and~\ref{theorem:dp1-rank-2}.

\begin{theorem}
\label{theorem:main-hard} Let $S$ be a smooth del Pezzo surface
of degree $ 2$. Then
\begin{enumerate}

\item  \[\mathrm{Amp}^{cyl}(S)\supset 
 \bigcup_{r=3}^{7}\mathrm{Amp}^{B}_{r}(S).\]
 
  \item  \[\mathrm{Amp}^{cyl}(S)\bigcap\mathrm{Amp}^{B}_{2}(S)\ne\emptyset.\]

\item \[\mathrm{Amp}^{cyl}(S)\supset 
 \bigcup_{\ell=3}^{6}\mathrm{Amp}^{C}_{\ell}(S).\]

 \item  For each $0\leqslant \ell \leqslant 6$
 \[ \mathrm{Amp}^{cyl}(S)\bigcap\mathrm{Amp}^{C}_{\ell}(S)\ne\emptyset.\]

\end{enumerate}
\end{theorem}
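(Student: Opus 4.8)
The plan is to treat statements (1)--(3), and most of (4), by pushing $S$ through a birational contraction onto a del Pezzo surface $S'$ of degree $\geqslant 3$ on which Theorem~\ref{theorem:main-easy} already provides the needed polar cylinder, and then pulling that cylinder back to $S$; the one genuinely separate case, the length-$0$ part of (4), will require a direct construction on $S$ itself. The transfer step is the following elementary observation. Let $\pi\colon S\to S'$ contract pairwise disjoint $(-1)$-curves $E_1,\dots,E_k$ to points $q_1,\dots,q_k$, let $0<a_i<1$ for each $i$, and suppose $D'$ is an effective $\mathbb{R}$-divisor on $S'$ whose complement $U'=S'\setminus\mathrm{Supp}(D')$ is a cylinder and whose support contains every $q_i$. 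Writing $m_i=\mathrm{mult}_{q_i}(D')\geqslant 1$ and $\pi^{*}D'=\widetilde{D'}+\sum_i m_iE_i$, the $\mathbb{R}$-divisor
\[
D:=\widetilde{D'}+\sum_{i=1}^{k}\bigl(m_i-1+a_i\bigr)E_i
\]
is effective, is numerically equivalent to $\pi^{*}D'-\sum_{i=1}^{k}(1-a_i)E_i$, and satisfies $\mathrm{Supp}(D)=\pi^{-1}(\mathrm{Supp}(D'))$; hence $S\setminus\mathrm{Supp}(D)=\pi^{-1}(U')\cong U'$ is again a cylinder. So the whole problem reduces to finding, on a del Pezzo surface of degree $\geqslant 3$, a polar cylinder whose boundary divisor passes through one or two prescribed points.

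For statement (1), let $H$ be of type $B(r_H)$ with $3\leqslant r_H\leqslant 7$, and let $E_1,\dots,E_{r_H}$ be the disjoint $(-1)$-curves spanning $\Delta_H$, so that $\mu_H H\equiv -K_S+\sum_{i=1}^{r_H}a_iE_i$ with $0<a_i<1$ by \eqref{remark:curves-in-face-birational}. Contract two of them, say $E_1$ and $E_2$, by $\pi\colon S\to S'$; then $S'$ is a smooth del Pezzo surface of degree $4$, the images $\widetilde{E}_3,\dots,\widetilde{E}_{r_H}$ are still disjoint $(-1)$-curves, and $H':=\pi_{*}(\mu_H H)=-K_{S'}+\sum_{i=3}^{r_H}a_i\widetilde{E}_i$ is an ample $\mathbb{R}$-divisor on $S'$ (by the converse in \eqref{remark:curves-in-face-birational}) which, because $r_H\geqslant 3$, is \emph{not} proportional to $-K_{S'}$. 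A direct computation with $K_S=\pi^{*}K_{S'}+E_1+E_2$ gives $\mu_H H=\pi^{*}H'-(1-a_1)E_1-(1-a_2)E_2$ with $0<1-a_i<1$, so by the transfer step (and rescaling by $1/\mu_H$) it suffices to exhibit an $H'$-polar cylinder on $S'$ whose boundary contains $q_1$ and $q_2$. This is the heart of the matter: one must verify that the polar-cylinder constructions on del Pezzo surfaces of degree $4$ --those of \cite{PW16}, together with the images, under the various blow-down structures, of cylinders on $\mathbb{P}^2$-- are flexible enough that, as soon as the polarization is not proportional to the anticanonical class, their boundary divisors can be slid through two arbitrary points. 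Granting this, statement (1) follows, since $H$ was an arbitrary class of type $B(r_H)$ with $r_H\geqslant 3$.

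Statements (2), (3) and (4) are variations on the same scheme, and the amount of flexibility available on the auxiliary surface is exactly what decides whether one gets a whole subcone or merely non-emptiness. For (3), given $H$ of type $C$ of length $\ell$ with $3\leqslant\ell\leqslant 6$ --so $\mu_H H\equiv -K_S+aB+\sum_i a_iE_i$ with $a>0$ and exactly $\ell$ of the $a_i$ nonzero by \eqref{lemma:curves-in-face}-- one contracts two of the $(-1)$-curves $E_i$ with $a_i\neq 0$; the image of $B$ is again a $0$-curve, the resulting surface is a del Pezzo surface of degree $4$ equipped with the induced conic-bundle polarization $H'=-K_{S'}+aB'+\sum a_i\widetilde{E}_i$, which is ample and not proportional to $-K_{S'}$, hence cylindrical by Theorem~\ref{theorem:main-easy}(1), and one transfers a cylinder through the two contracted points as before. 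For (2), and for the cases $\ell=1,2$ of (4), only a single cylindrical class has to be produced, so one is free to choose the curves that get contracted, and with them the image points: contracting two disjoint $(-1)$-curves onto a quartic del Pezzo surface (for $\mathrm{Amp}^{B}_{2}(S)$ and $\mathrm{Amp}^{C}_{2}(S)$) or one $(-1)$-curve onto a cubic surface (for $\mathrm{Amp}^{C}_{1}(S)$, where the induced class is of type $C$ of length $0$, hence cylindrical by Theorem~\ref{theorem:main-easy}(2)), and arranging the curves so that the image point(s) lie on the boundary of an already known polar cylinder there, yields the desired examples. The case $\ell=0$ of (4) is the only genuinely separate one: here $\Delta_H$ is the ray spanned by a $0$-curve $B$ and there is nothing to contract, so one builds the cylinder on $S$ itself out of the conic bundle $\varphi\colon S\to\mathbb{P}^1$ given by $|B|$ --removing a suitably chosen section, some whole singular fibres, and one appropriately chosen component of each remaining singular fibre yields an $\mathbb{A}^1$-bundle over an affine open subset of $\mathbb{P}^1$-- and one checks that for a judicious choice of section and of components the class of the removed divisor is numerically equivalent to $-K_S+aB$ for some $a>0$, i.e.\ lies in $\mathrm{Amp}^{C}_{0}(S)$.

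The principal obstacle throughout is the flexibility assertion needed for (1) and (3): proving that the polar-cylinder constructions on quartic del Pezzo surfaces are rich enough to carry their boundary divisor through two prescribed points whenever the polarization is not proportional to the anticanonical class. The failure of this flexibility at --or near-- the anticanonical polarization of a quartic (or cubic) del Pezzo surface is precisely what restricts the cases $\mathrm{Amp}^{B}_{2}(S)$, $\mathrm{Amp}^{C}_{1}(S)$ and $\mathrm{Amp}^{C}_{2}(S)$ to isolated examples, while the absence of any contractible curve in the Fujita face is what forces $\mathrm{Amp}^{C}_{0}(S)$ to be treated by a separate, hands-on conic-bundle construction.
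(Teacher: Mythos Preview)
Your proposal is a plausible strategy, but it is not a proof: the heart of parts (1) and (3) is what you call the ``flexibility assertion''---that on a quartic del Pezzo surface the boundary of an $H'$-polar cylinder can always be made to pass through two prescribed points---and you explicitly leave this unproven (``Granting this, statement~(1) follows''). That assertion is precisely the content of the theorem; without it nothing has been established. There is also an error in the transfer step: for an effective $\mathbb{R}$-divisor $D'$, the condition $q_i\in\mathrm{Supp}(D')$ yields only $m_i>0$, not $m_i\geqslant 1$. For your $D$ to be effective with each $E_i$ in its support you actually need $m_i>1-a_i$, so the required flexibility is not merely that $D'$ pass through $q_1,q_2$, but that it do so with multiplicity exceeding $1-a_i$---a constraint that becomes stringent as $a_i\to 0$, i.e.\ exactly near the boundary of $\mathrm{Amp}^{B}_{r}(S)$ where a uniform argument is hardest.

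The paper's route is different and entirely explicit. Rather than passing through degree~$4$ as a black box, it contracts \emph{all} of $E_1,\dots,E_r$ (and often additional curves) to $\mathbb{P}^2$, $\mathbb{P}^1\times\mathbb{P}^1$, or $\mathbb{F}_1$, and writes down concrete cylinder configurations there---a conic and a tangent line, a cuspidal cubic and its cuspidal tangent, two tangent $(1,1)$-curves, and so on (Examples~\ref{example:lines}--\ref{example:non-Prokhorov example4})---whose boundaries contain all blown-up points by construction. The coefficients of the pulled-back divisor on $S$ are then computed directly and checked to be positive for small~$\varepsilon$. This is carried out case by case in~$r$ (respectively~$\ell$), and split again according to whether $|{-K_S}|$ contains a cuspidal curve; when it does not---which happens for exactly two surfaces---the twelve tacnodal anticanonical divisors serve instead. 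Proving your flexibility assertion would amount to redoing these same explicit constructions on the degree-$4$ surface; the detour through Theorem~\ref{theorem:main-easy} buys nothing.
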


In Theorem~\ref{theorem:main-hard}, (1) follows from Theorem~\ref{theorem:BR3-7}, (2) from Theorem~\ref{theorem:BR2},
(3) from Theorem~\ref{theorem:CL3-6}, and (4) from Theorem~\ref{theorem:CL0-2}.

\begin{theorem}
\label{theorem:main-hard-1} Let $S$ be a smooth del Pezzo surface
of degree $1$. Then
\begin{enumerate}
\item  For each $3\leqslant r \leqslant 8$
 \[\mathrm{Amp}^{cyl}(S)\bigcap\mathrm{Amp}^{B}_{r}(S)\ne\emptyset.\]
 \item  For each $0\leqslant \ell \leqslant 7$
 \[ \mathrm{Amp}^{cyl}(S)\bigcap\mathrm{Amp}^{C}_{\ell}(S)\ne\emptyset.\]
\end{enumerate}
\end{theorem}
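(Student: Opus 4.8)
The plan is to prove Theorem~\ref{theorem:main-hard-1} by producing, for each relevant Fujita type and length (resp.\ rank), an explicit ample $\mathbb{R}$-divisor $H$ on $S=S_1$ together with an effective $\mathbb{R}$-divisor $D\equiv H$ whose complement is a cylinder. Since $S_1$ is obtained by blowing up $\mathbb{P}^2$ at $8$ points in general position, I would work with the $240$ $(-1)$-curves and the pencils of conics/cubics that furnish the conic-bundle and birational contractions described in Subsection~2.2. The guiding principle, consistent with Conjecture~\ref{conjecture:Du-Val} and with the earlier results, is that on $S_1$ there is no $(-K_S)$-polar cylinder (this is $\mathrm{Amp}^B_0$, already excluded), and the low-rank/low-length cases $r\leq 2$, $\ell\leq 2$ are governed by Theorem~\ref{theorem:main-non-existence} via Remark~\ref{observation}, so one only needs to \emph{exhibit} cylinders for $3\le r\le 8$ and $0\le \ell\le 7$, and the theorem only claims nonempty intersection with each stratum, not equality.

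The key steps, in order. First, for the birational strata: fix $r$ with $3\le r\le 8$ and choose $r$ pairwise disjoint $(-1)$-curves $E_1,\dots,E_r$ on $S_1$; contracting them yields a weak del Pezzo (or del Pezzo) surface $S'$ of degree $1+r\ge 4$, and by Theorem~\ref{theorem:main-easy}~(1), $S'$ has an $H'$-polar cylinder for \emph{every} ample $H'$, in particular for $H'$ with complement a cylinder $U'$. One then needs to arrange that $U'$ avoids (or meets nicely) the images of the $E_i$, so that pulling back and adjusting by a small positive combination $\sum b_i E_i$ with $b_i<1$ produces an ample class of type $B(r)$ — here I use the "vice versa" statement after \eqref{remark:curves-in-face-birational} that $-K_S+\sum b_iE_i$ is ample — whose polar divisor still has cylinder complement. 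Concretely, the cylinder on $S'$ constructed in the degree $\ge 4$ cases is complementary to a chain/triangle of curves; choosing the $E_i$ among points lying on that boundary divisor lets the blow-up be absorbed into the boundary. Second, for the conic-bundle strata $C(\ell)$ with $0\le \ell\le 7$: take a conic bundle $\phi\colon S_1\to\mathbb{P}^1$ with $8$ reducible fibers, pick a section $B$ (a $0$-curve), and in $\ell$ of the reducible fibers distinguish a component $E_i$; then $-K_S+bB+\sum_{i=1}^{\ell}b_iE_i$ with $b>0$, $0<b_i<1$ is ample of type $C(\ell)$ by the displayed "vice versa" after \eqref{lemma:curves-in-face}. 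The cylinder should be $\phi^{-1}(\mathbb{P}^1\setminus\{pt\})$ minus the section $B$ minus suitable fiber components, realized as an $\mathbb{A}^1$-bundle over $\mathbb{A}^1$; one must check that the discarded curves are exactly the support of an effective divisor numerically equivalent to the chosen $H$, which amounts to solving a small linear system for the multiplicities $\lambda_i$ in \eqref{equation:num-equiv}.

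The hard part will be the case analysis ensuring that the ample divisor realizing each stratum can be chosen with polar support equal to the complement of a genuine cylinder, rather than merely that \emph{some} ample class of that type exists: the cone $\mathrm{Amp}^B_r(S_1)$ (resp.\ $\mathrm{Amp}^C_\ell(S_1)$) is a full-dimensional region, and the constructed cylinder only "sees" a subcone cut out by the boundary curves $C_1,\dots,C_n$, so one must verify this subcone meets the stratum. I expect this to be handled, as in \cite{CheltsovParkWon2} and \cite{PW16}, by exploiting the freedom in the positive constants $b_i$ and in the choice of section $B$ and of which $(-1)$-curves to single out, together with the flexibility of the degree $\ge 4$ results; the bookkeeping is delegated to Theorems~\ref{theorem:dp1-2-rank-1} and~\ref{theorem:dp1-rank-2} for the non-existence boundary and to the explicit constructions referenced for Theorems~\ref{theorem:main-hard} and~\ref{theorem:main-easy}. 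Finally I would assemble the pieces: Theorem~\ref{theorem:main-non-existence} (which the theorem does not contradict, since it only concerns $r\le 2=3-d$) pins down the lower boundary, and the constructions above cover $3\le r\le 8$ and $0\le\ell\le7$, completing the proof.
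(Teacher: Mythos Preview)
Your plan has the right shape but a genuine gap in part~(1), and part~(2) is too vague (with a misreading of the set-up).

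For part~(1) you propose to contract $E_1,\dots,E_r$ to a del Pezzo surface $S'$ of degree $1+r\ge 4$ and invoke Theorem~\ref{theorem:main-easy}~(1) as a black box. The problem is that the cylinder $U'=S'\setminus\mathrm{Supp}(D')$ you obtain this way has no reason to satisfy $\phi(E_i)\in\mathrm{Supp}(D')$ for all $i$; without that, the preimage of $U'$ in $S$ is not a cylinder and you cannot write an effective divisor $\equiv \mu H$ whose support is the complement. You note the issue, but the fix you suggest (``choosing the $E_i$ among points lying on that boundary divisor'') does not work: the eight points underlying $S_1\to\mathbb{P}^2$ are in \emph{general} position, while the cylinders supplied by Lemmas~\ref{lemma:deg-3-cylinders-birational-1}--\ref{lemma:deg-3-cylinders-conic-bundle} on $S'$ have boundary in $\mathbb{P}^2$ that is a conic-plus-tangent-line (or similar), which can absorb at most $6$ general points. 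The paper's Proposition~\ref{proposition:deg-1-cylinders-birational} solves this by constructing a richer boundary in $\mathbb{P}^2$---a conic $C$ through five of the points, a tangent line $L$ through a sixth, and two further conics $C_1,C_2$ (each tangent to $C$ at the same point as $L$) through the remaining two---so that all eight points lie on the support; this forces the numerical constraint $2a_1+2a_2+a_3>4$, which is exactly what singles out a nonempty subcone of $\mathrm{Amp}^B_r(S)$ for each $r\ge 3$.

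For part~(2), in the decomposition $K_S+\mu H\equiv aB+\sum a_iE_i$ the curve $B$ is an irreducible \emph{fiber} of the conic bundle $\phi$, not a section; your proposed cylinder ``$\phi^{-1}(\mathbb{P}^1\setminus\{\mathrm{pt}\})$ minus the section $B$'' is therefore based on a misidentification, and in any case removing one fiber and one section from a conic bundle with seven reducible fibers does not leave an $\mathbb{A}^1$-bundle. The paper instead contracts the seven $(-1)$-curves $E_i$ to $\mathbb{P}^1\times\mathbb{P}^1$ or $\mathbb{F}_1$ and uses the configuration of Example~\ref{example:P1-bundle}/\ref{example:lines} (a section together with all seven fibers through the $P_i$) as in Theorem~\ref{theorem:CL0-2}; the resulting coefficients on the $E_i$ are positive precisely when $a>\tfrac{30}{7}$ (Proposition~\ref{proposition:deg-1-cylinders-conic}), and since the $a_i$ are unconstrained this single construction meets $\mathrm{Amp}^C_\ell(S)$ for every $0\le\ell\le 7$.
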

The statement follows from Propositions~\ref{proposition:deg-1-cylinders-birational}~and~\ref{proposition:deg-1-cylinders-conic}.

\section{Rational singular  surfaces without any cylinder}
\label{section:example}
 
 \subsection{Examples}
%\subsection{\blue{Surfaces with quotient singularities}}
Before we proceed, 
let us remind that  a normal surface singularity is Kawamata log terminal if and only if it is a quotient singularity (\cite[Corollary~1.9]{Ka84}). A projective surface with quotient singularities is always $\mathbb{Q}$-factorial. 
%In particular, it is normal. 

\begin{definition}  A normal projective surface  with quotient singularities is called  a log Enriques surface  if its canonical class is numerically trivial and its irregularity is zero.
It is called a log del Pezzo surface if its anticanonical class is  ample (\cite{MZh88}).
\end{definition}

We are now ready to present several examples of rational singular surfaces without any cylinder.

\begin{example}\label{example:Kollar}
J.~Koll\'ar has constructed a series of rational surfaces with ample canonical classes in \cite{Kollar}. The following is an easy example based on his construction.

Let $a_1, a_2, a_3, a_4$; $w_1, w_2,w_3, w_4$  be positive integers such that
\begin{itemize}
\item $a_1w_1+w_2=a_2w_2+w_3=a_3w_3+w_4=a_4w_4+w_1;$
\item  $\gcd ( w_1,w_3)=1$,  $\gcd ( w_2,w_4)=1$.
\end{itemize}
From the first condition above we obtain
\[w_1=(a_2a_3a_4-a_3a_4+a_4-1), \ \ \ w_2=(a_1a_3a_4-a_1a_4+a_1-1),\]
\[w_3=(a_1a_2a_4-a_1a_2+a_2-1), \ \ \ w_4=(a_1a_2a_3-a_2a_3+a_3-1).\]
Let $S$ be the Klein-type hypersurface in $\mathbb{P}(w_1,w_2,w_3,w_4)$ defined by the quasi-homogeneous equation of degree $(a_1a_2a_3a_4-1)$
\[x_1^{a_1}x_2+x_2^{a_2}x_3+x_3^{a_3}x_4+x_4^{a_4}x_1=0.\]
By the conditions $\gcd ( w_1,w_3)=1$ and  $\gcd ( w_2,w_4)=1$ we can easily see that $S$ is well-formed. Therefore, 
$$K_{S}=\mathcal{O}_S\left(a_1a_2a_3a_4-w_1-w_2-w_3-w_4-1\right).$$ 
 If $a_1$, $a_2$, $a_3$, $a_4\geqslant 4$, then $a_1a_2a_3a_4-w_1-w_2-w_3-w_4-1>0$, and hence 
$K_{S}$ 
is ample. 
By \cite[Theorem~39]{Kollar} $S$ is a rational surface of Picard rank three with four cyclic quotient singularities. 
Therefore, the  surface $S$ cannot contain any cylinder. In \cite{HwangKeum} D.~Hwang and J.~Keum have constructed another 
types of singular rational surfaces  with ample canonical divisors. 
\end{example}

\begin{example}
\label{example:Lee}
In order to construct smooth surface of general type with $p_g=q=0$ for a given self-intersection number of the canonical class, Y.~Lee, H.~Park, J.~Park, and D.~Shin generate rational elliptic surfaces with nef canonical classes  and quotient singularities of class T and then take their $\mathbb{Q}$-smoothings in \cite{LP07}, \cite{PPSh09a}, \cite{PPSh09b}.  Many such surfaces are presented in   \cite{LN13}. These rational surfaces meet our conditions not to have any cylinder. 
\end{example}

\begin{example}[ cf. \cite{OguisoTruong}]
\label{example:Ueno-Campana}
Let 
\[E=\mathbb{C}/(\mathbb{Z}+\tau\mathbb{Z})\]
be the elliptic curve of period $\tau=e^{\frac{2}{3}\pi}$.  Its $j$-invariant is $0$ and it is isomorphic to the Fermat cubic curve.
It is the unique elliptic curve admitting an automorphism $\sigma$ of order three such that $\sigma^* (\omega)=\tau \omega$,
where $\omega$ is a non-zero regular $1$-form on $E$. 
Let $S$ be the quotient surface
\[E\times E/ \langle\mathrm{diag}(-\sigma, -\sigma)\rangle.\]
Then, $6K_S$ is linearly trivial. Since there is no  non-zero regular $1$-form on $E\times E$ invariant by 
$\mathrm{diag}(-\sigma, -\sigma)$, we obtain $h^1(S, \mathcal{O}_S)=0$. Therefore, the surface $S$ is a rational surface with quotient singularities whose canonical class is numerically trivial, i.e., it is 
a log Enriques surface, and hence it cannot contain any cylinder.
\end{example}

\begin{example}
\label{example:log-Engiques} This construction is due to K.~Oguiso and D.-Q.~Zhang (\cite[Example~1]{OZh96}). The most extremal rational log Enriques surface of type $\mathrm{D}_{19}$ is defined as follows. Let $\overline{S}'$ be the quotient surface
 \[E\times E/ \langle\mathrm{diag}(\sigma, \sigma^2)\rangle,\]
 where the notations are the same as in Example~\ref{example:Ueno-Campana}. Note that the automorphism  $\sigma$ on $E$ has exactly  three fixed points $P_1$, $P_2$, $P_3$, respectively. They  correspond to $0$, $\frac{2}{3}+\frac{1}{3}\tau$, and $\frac{1}{3}+\frac{2}{3}\tau$.  The action by $ \langle\mathrm{diag}(\sigma, \sigma^2) \rangle$ on $E\times E$ has nine fixed points and these nine fixed points become du~Val singular points of type $\mathrm{A}_{2}$ on $\overline{S}'$. Let $S'$ be the minimal resolution of $\overline{S}'$. It is a K3~surface with twenty four smooth rational curves.
 Six of them come from the six fixed  curves, $\{P_i\}\times E$, $E\times\{P_i\}$ on $E\times E$. The others come from the nine singular points of type $\mathrm{A}_{2}$. Let $\sigma'$ be the automorphism of $S'$ induced by the automorphism  $\mathrm{diag}(\sigma, 1)$ on $E\times E$. Our twenty four smooth rational curves on $S'$ are $\sigma'$-invariant. Among these twenty four curves we can find a rational tree of  type $\mathrm{D}_{19}$.  Let $S'\to \hat{S}$ be the contraction of this tree. Then $\sigma'$ acts on $\hat{S}$ and it fixes two points. The quotient surface $\hat{S}/\langle \sigma'\rangle$ is a rational log Enriques surface.
 It follows from Remark~\ref{observation}   that any rational log Enriques surface cannot contain a cylinder. 
 
Rational log Enriques surfaces of ranks $19$ and $18$ are completely classified in \cite{OguisoZhang}, \cite{Wang}, respectively.
\end{example}

An effective $\mathbb{Q}$-divisor $D$ on a proper normal variety $X$ is called a tiger if it is numerically equivalent to $-K_X$ and $(X, D)$ is not log canonical. The tiger was introduced by S.~Keel and J.~Mckernan 
in  their study of log del Pezzo surfaces of Picard rank one (\cite{KeelMcKernan}).

\begin{example}\label{example:McKernan}
M.~Miyanishi proposed a conjecture (see \cite{GZh94}) that for a log del Pezzo surface of Picard rank one, its smooth locus has a 
finite unramified covering  that contains a cylinder.
It however turned out that the conjecture is answered in negative. S.~Keel and J.~Mckernan have constructed log del Pezzo surfaces of Picard rank one such that
\begin{itemize}
\item they have no tigers;
\item their smooth loci have trivial algebraic fundamental groups.
\end{itemize}
For their construction, see  \cite[Eample 21.3.3]{KeelMcKernan}. If such a surface $S$ contains a cylinder $U$, then we are able to obtain an effective divisor $D$ such that $S\setminus U=\mathrm{Supp}(D)$. Since $S$ is a log del Pezzo surface of Picard rank one, for some positive rational number $\lambda$ the divisor $\lambda D$ is linearly equivalent to $-K_S$. It immediately follows from Remark~\ref{observation}  that $\lambda D$ is a tiger. This is a contradiction. Therefore, the surface $S$ cannot contain any cylinder at all.
\end{example}

The rational surfaces in Exmple~\ref{example:McKernan} are overqualified to be free from cylinders. We may give away the condition of  the algebraic fundamental group since we are not considering cylinders in 
\'etale covers. On del Pezzo surfaces of Picard rank one with du Val singularities,  instead of non-existence of tigers, we can apply a finer condition than Remark~\ref{observation} that there is a tiger that does not contain the support of any effective 
anticanonical divisor (see \cite[Remark~3.8]{CheltsovParkWon2}).

\begin{example}
\label{example:dP1-D4-2A32A1-4A2} 
Let $S$ be a del Pezzo surface of degree $1$ with one of the following types of singularities
\[2\mathrm{D}_4, \  \ 2\mathrm{A}_3+2\mathrm{A}_1, \ \ 4\mathrm{A}_2.\]
Then its divisor class group  is generated by the anticanonical class over $\mathbb{R}$. Therefore, Theorem~\ref{theorem:singular-del-Pezzos}  implies that 
$S$ cannot contain any cylinder at all.
The smooth loci of these surfaces are not simply connected (\cite{MZh88}).
\end{example}

\begin{remark}
The surfaces in Example~\ref{example:dP1-D4-2A32A1-4A2}  are the only del Pezzo surfaces with du Val singularities that have no cylinder.
The other del Pezzo surfaces with du Val singularities contain cylinders. Indeed, del Pezzo surfaces with one of the types of singularities
$2\mathrm{D}_4$, $2\mathrm{A}_3+2\mathrm{A}_1$, $4\mathrm{A}_2$ are the only ones that contain no $(-K_{S})$-cylinder and have Picard rank one (see \cite{CheltsovParkWon2}). Furthermore, since their divisor class groups are generated by the anticanonical classes over $\mathbb{R}$,
they cannot contain any cylinder at all.
In \cite{CheltsovParkWon2}, all the del Pezzo surfaces with du Val singularities that have no anticanonically polarized cylinders are completely classified. Among them, the surfaces of the three types of singularities above are the only ones of Picard rank one.
For those of higher Picard rank without anticanonically polarized cylinders   one can always construct cylinders polarized by ample $\mathbb{Q}$-divisors. These constructions can be made by manipulating various anticanonically polarized cylinders that appear in \cite{CheltsovParkWon2}.
\end{remark}

\section{Cylinders in del Pezzo surfaces of big degree}

\subsection{Basic cylinders}
\label{section:cylinders}
We here present many examples of cylinders on smooth del Pezzo surfaces. These will be building blocks from which we are able to construct cylinders polarized by   various ample divisors.

Before we proceed, note that an $(n)$-curve on a smooth surface is an integral curve isomorphic to $\mathbb{P}^1$ with self-intersection number $n$.

\begin{example}\label{example:lines}
On $\mathbb{P}^2$, let $L_i$, $i=1,\ldots, r$, be lines meeting altogether  at a single point. The complement of the union of these $r$ lines 
 is an $\mathbb{A}^1$-bundle over  an $(r-1)$-punctured  affine line. Therefore, this is a cylinder in $\mathbb{P}^2$.
\end{example}
\begin{example}\label{example:line-and-conic}
Let $L$ and $M$ be a line and a conic on $\mathbb{P}^2$ intersecting tangentially at a point.
Each divisor $aL+bM$ with positive real numbers $a$ and $b$ defines a cylinder isomorphic to 
an $\mathbb{A}^1$-bundle over a simply punctured affine line $\mathbb{A}^1_*$.
\end{example}
\begin{example}\label{example:cusp}
Let $C$ be a cuspidal cubic curve on $\mathbb{P}^2$ and $T$ be the Zariski tangent line at its cuspidal point $P$. Their complement is a cylinder isomorphic to $\mathbb{A}^1\times \mathbb{A}^1_*$. To see this, take the blow up $\pi_1:\mathbb{F}_1\to \mathbb{P}^2$ at the point $P$ and then take the blow up $\pi_2:\tilde{S}_7\to \mathbb{F}_1$ at the intersection point of the proper transforms of $C$ and $T$. Let $E_1$ be the exceptional curve on $\tilde{S}_7$ contracted by $\pi_1$ and $E_2$ be the exceptional curve on $\tilde{S}_7$ contracted by $\pi_2$. In addition, let $\tilde{C}$ and $\tilde{T}$ be the proper transforms of $C$ and $T$ by $\pi_1\circ\pi_2$. 
Now we contract $\tilde{T}$, which is a $(-1)$-curve on $\tilde{S}_7$. This gives us a birational morphism $\pi:\tilde{S}_7\to \mathbb{F}_2$.  Then $\pi(E_1)$ is the exceptional section of $\mathbb{F}_2$  with  self-intersection number $-2$ and $\pi(E_2)$ is a fiber of  $\mathbb{F}_2$. The curve  $\pi(\tilde{C})$ is linearly equivalent to $\pi (E_1)+3\pi(E_2)$ and is a section 
of $\mathbb{F}_2$ with the self-intersection number $4$. The three curves 
$\pi(E_1)$, $\pi(E_2)$, and $\pi(\tilde{C})$ on $\mathbb{F}_2$ meet transversally at a single point, and hence  their complement is isomorphic to $\mathbb{A}^1\times\mathbb{A}^1_*$.  Since
\[\mathbb{P}^2\setminus(C\cup T)\cong \tilde{S}_7\setminus(\tilde{C}\cup \tilde{T}\cup E_1\cup E_2)
\cong \mathbb{F}_2\setminus(\pi(\tilde{C}) \cup \pi(E_1)\cup \pi(E_2))\]
the complement of $C$ and $T$ is a cylinder.
\end{example}

\begin{example}\label{example:P1-bundle}
On $\mathbb{P}^1\times\mathbb{P}^1$, let $E$ be a curve of bidegree $(1,0)$ and $F_i$, $i=1,\ldots, r$, be curves of bidegree $(0,1)$.  The complement of these curves is isomorphic to an $\mathbb{A}^1$-bundle over  a $(r-1)$-punctured affine line.
By blowing up $\mathbb{P}^1\times\mathbb{P}^1$ at the intersection point of $E$ and $F_1$ and then contracting the proper transforms of $E$ and $F_1$ to $\mathbb{P}^2$, we may encounter the cylinder described in Example~\ref{example:lines}.

\end{example}

\begin{example}[{\cite[Theorem~3.19]{KPZ11a}}]
\label{example:KPZ-actions-4-9} 
Let $S_d$ be a smooth del Pezzo surface of degree $d\geqslant 2$, not isomorphic to $\mathbb{P}^1\times\mathbb{P}^1$.
Then it can be obtained by blowing up $\mathbb{P}^2$ at $(9-d)$ points  in general position. Let $\sigma: S_d\to \mathbb{P}^2$ be such a blow up and let $E_i$ be the exceptional curve, where $i=1,\ldots, 9-d$. Put $P_i=\sigma(E_i)$.
Suppose that these $(9-d)$ points $P_1,\ldots, P_{9-d}$ lie on the union of a line $L$ and a conic $C$ that intersect tangentially at a single point.  Note
$$S_d\setminus (\tilde{L} \cup \tilde{C}\cup E_1\cup \ldots\cup E_{9-d})\cong \mathbb{P}^2\setminus (L\cup C)
\cong \mathbb{A}^1\times\mathbb{A}^1_*,$$
where $\tilde{L}$ and $\tilde{C}$ are the proper transforms of $L$ and $C$ by $\sigma$.

We now suppose that $d\geqslant 4$. There is a conic passing through all the points  $P_1,\ldots, P_{9-d}$. So we may assume that 
$P_1,\ldots, P_{9-d}$ lie on $C$ and  that $L$ meets $C$ tangentially at a point other than  $P_1,\ldots, P_{9-d}$.
Then, for a real number
$0<\varepsilon<\frac{1}{2}$, the $\mathbb{R}$-divisor 
$$(1+\varepsilon)\tilde{C}+(1-2\varepsilon)\tilde{L}+\varepsilon\sum_{i=1}^{9-d} E_i$$ 
is effective and numerically equivalent to $-K_{S_d}$.
Therefore, the cylinder
$$S_d\setminus (\tilde{L} \cup \tilde{C}\cup E_1\cup \ldots\cup E_{9-d})$$ is $(-K_{S_d})$-polar.
\end{example}

\begin{example}\label{example:non-Prokhorov example8-1}
Let $C$ be an irreducible curve of bidegree $(1,2)$ on $\mathbb{P}^1\times \mathbb{P}^1$. 
There are two curves of bidegree $(1,0)$ that intersect $C$ tangentially. Let $T$ be one of them and $P$ be the intersection point of $C$ and $T$.  In addition, let $L$ be the curve of bidegree $(0,1)$ passing through the point $P$.
Then the open set 
\[\mathbb{P}^1\times\mathbb{P}^1\setminus (C \cup L\cup T)\]
is a cylinder. To see this, let $\pi:S_7\to \mathbb{P}^1\times\mathbb{P}^1$ be the blow up at $P$ and $E$ be its exceptional curve. Denote the proper transforms of the curves $C$, $T$, and $L$ by $\tilde{C}$, $\tilde{T}$, and $\tilde{L}$, respectively.
The curves  $\tilde{T}$ and $\tilde{L}$ are disjoint $(-1)$-curves.
 Let $\psi:S_7\to \mathbb{P}^2$ be the contraction of $\tilde{L}$ and $\tilde{T}$. Then, $\psi(E)$ is a line, $\psi(C)$ is a conic, and they meet tangentially. Therefore, the open set above is a cylinder since
 \[\mathbb{P}^1\times\mathbb{P}^1\setminus (C \cup L\cup T)\cong 
 S\setminus (\tilde{C} \cup \tilde{L}\cup \tilde{T}\cup E)\cong 
 \mathbb{P}^2\setminus (\psi(\tilde{C})\cup \psi(E))\cong \mathbb{A}^1\times\mathbb{A}^1_*.\]
 In particular, for a real number $0<\varepsilon<1$,
$$(1-\varepsilon)C+(1+\varepsilon )T+ 2\varepsilon L\equiv -K_{\mathbb{P}^1\times\mathbb{P}^1},$$ and hence
the cylinder is  $(-K_{\mathbb{P}^1\times\mathbb{P}^1})$-polar.
\end{example}

\begin{example}\label{example:non-Prokhorov example8-0}
Let $C_1$ and $C_2$ be  irreducible curves of bidegree $(1,1)$ on $\mathbb{P}^1\times \mathbb{P}^1$.
Suppose that these two curves meet tangentially at a single point $P$. Let $L_1$ and $L_2$ be the curves of bidegrees $(1,0)$ and $(0,1)$, respectively, that pass through the point $P$.
We claim that the open set 
\[\mathbb{P}^1\times\mathbb{P}^1\setminus (C_1\cup C_2 \cup L_1\cup L_2)\]
is a cylinder.   Let $\pi:S_7\to\mathbb{P}^1\times\mathbb{P}^1$ be the blow up at $P$ and $E$ be its exceptional curve. Denote the proper transforms of $C_i$ and $L_i$ by $\tilde{C}_i$ and $\tilde{L}_i$, respectively, $i=1,2$. The curves  $\tilde{L}_1$ and $\tilde{L}_2$ are disjoint $(-1)$-curves. Therefore, we  obtain a birational morphism $\psi: S_7\to \mathbb{P}^2$ by contracting $\tilde{L}_1$ and $\tilde{L}_2$. The three curves
$\psi(E)$, 
 $\psi(\tilde{C}_1)$, $\psi(\tilde{C}_2)$ are lines intersecting at a single point. Since
 \[\mathbb{P}^1\times\mathbb{P}^1\setminus (C_1\cup C_2 \cup L_1\cup L_2)\cong 
 S_7\setminus (\tilde{C}_1\cup \tilde{C}_2 \cup \tilde{L}_1\cup \tilde{L}_2\cup E)\cong 
 \mathbb{P}^2\setminus (\psi(\tilde{C}_1)\cup \psi(\tilde{C}_2)\cup\psi(E))\cong \mathbb{A}^1\times\mathbb{A}^1_{**},\]
 where $\mathbb{A}^1_{**}$ is a 2-punctured affine line,
 our claim is confirmed.
 In particular, the cylinder is $(-K_{\mathbb{P}^1\times\mathbb{P}^1})$-polar because 
 the divisor $$(1+\varepsilon)C_1+(1-2\varepsilon )C_2+ \varepsilon L_1+ \varepsilon L_2$$ is effective 
for an arbitrary real number $0<\varepsilon<\frac{1}{2}$ and nemerically equivalent to $-K_{\mathbb{P}^1\times\mathbb{P}^1}$. 
 \end{example}

\begin{example}\label{example:non-Prokhorov example8-2}
Let $C$ be a cuspidal rational curve in the anticanonical linear system of the Hirzebruch surface $\mathbb{F}_1$. Let $M$ be the $0$-curve that passes through the cuspidal point $P$. There is a unique $1$-curve $T$ that intersects $C$ only at the point $P$. For an arbitrary real number $0<\varepsilon<1$, the divisor $$(1-\varepsilon)C+\varepsilon M+ 2\varepsilon T$$ defines a $(-K_{\mathbb{F}_1})$-polar cylinder. To see this, take the blow up $\phi:S_7\to \mathbb{F}_1$ at the point $P$. Let $E$ be the exceptional curve. Denote the proper transforms of the curves $C$, $M$, and $T$  on $S_7$ by $\tilde{C}$, $\tilde{M}$, and $\tilde{T}$, respectively. The $\mathbb{R}$-divisor 
$(1-\varepsilon)\tilde{C}+\varepsilon \tilde{M}+ 2\varepsilon \tilde{T}+ (1+\varepsilon)E$ is numerically equivalent to $-K_{S_7}$.
Let $\psi :S_7\to \mathbb{P}^1\times \mathbb{P}^1$ be the contraction of the $(-1)$-curve $\tilde{M}$. Then the $\mathbb{R}$-divisor 
$(1-\varepsilon)\psi(\tilde{C})+ 2\varepsilon \psi(\tilde{T})+ (1+\varepsilon)\psi(E)$  is the divisor in 
Example~\ref{example:non-Prokhorov example8-1} that defines a $(-K_{\mathbb{P}^1\times\mathbb{P}^1})$-polar cylinder.
\end{example}

\begin{example}\label{example:non-Prokhorov example8-3}
On the Hirzebruch surface $\mathbb{F}_1$, let $M$ be a $0$-curve and let $T$ be a $1$-curve. Let $P$ be the intersection  point of $T$ and $M$. There is a $3$-curve $C$ that is tangent to the curve $T$ at the point $P$.
Then for  an arbitrary real number $0<\varepsilon<1$, the divisor $$(1-\varepsilon)C+ (1+\varepsilon)T+
\varepsilon M$$ defines a $(-K_{\mathbb{F}_1})$-polar cylinder.  Take the blow up $\phi:S_7\to \mathbb{F}_1$ at the point $P$. Let $E$ be its exceptional curve. Denote the proper transforms of the curves $C$, $M$, and $T$  on $S_7$ by $\tilde{C}$, $\tilde{M}$, and $\tilde{T}$, respectively. The $\mathbb{R}$-divisor 
$(1-\varepsilon)\tilde{C}+ (1+\varepsilon) \tilde{T}+\varepsilon \tilde{M}+ (1+\varepsilon)E$ is numerically equivalent to $-K_{S_7}$.
Let $\psi :S_7\to \mathbb{P}^1\times \mathbb{P}^1$ be the contraction of the $(-1)$-curve $\tilde{M}$. Then the $\mathbb{R}$-divisor 
$(1-\varepsilon)\psi(\tilde{C})+ (1+\varepsilon )\psi(\tilde{T})+ (1+\varepsilon)\psi(E)$  is 
 numerically  equivalent to $-K_{\mathbb{P}^1\times\mathbb{P}^1}$. The curve $\psi(\tilde{C})$ is an irreducible curve of bidegree $(1,1)$, $\psi(\tilde{T})$ is of bidegree $(1,0)$, and $\psi(\tilde{E})$ is of bidegree $(0,1)$.
Moreover, these three curves meet at a single point.  This easily shows that the divisor $(1-\varepsilon)C+ (1+\varepsilon)T+
\varepsilon M$ defines a $(-K_{\mathbb{F}_1})$-polar cylinder. 
\end{example}

\begin{example}\label{example:non-Prokhorov example7-1}
Let $C$ be a cuspidal rational curve in the anticanonical linear system of the smooth del Pezzo surface  $S_7$ of degree 7. 
There are exactly two $0$-curves $M_1$, $M_2$ passing through the cuspidal point $P$ of $C$.
There is a unique $1$-curve $T$ that meets $C$ only at the point $P$. For an arbitrary real number $0<\varepsilon<1$, the divisor $$(1-\varepsilon)C+\varepsilon M_1+\varepsilon M_2+ \varepsilon T$$ defines a $(-K_{S_7})$-polar cylinder. 
To see this, take the blow up $\phi:S_6\to S_7$ at the point $P$. Let $E$ be the exceptional divisor. 
The proper transforms of $M_1$ and $M_2$ are disjoint $(-1)$-curves on $S_6$. Let $\psi: S_6\to \mathbb{F}_1$ be the birational morphism obtained by contracting these two curves. Then the curve $E$ becomes a $1$-curve and the curve $T$ becomes a $0$-curve on $\mathbb{F}_1$. They intersect at a single point $Q$. The curve $C$ becomes a $3$-curve tangent to $\psi(E)$ at $Q$.
Then
Example~\ref{example:non-Prokhorov example8-3}  shows that the divisor $(1-\varepsilon)C+\varepsilon M_1+\varepsilon M_2+ \varepsilon T$ defines a $(-K_{S_7})$-polar cylinder.
 \end{example}

\begin{example}\label{example:non-Prokhorov example6-1}
Let $S_6$ be a smooth del Pezzo surface of degree $6$.  Let $C$ be a cuspidal rational curve in the anticanonical linear system and let $P$ be its cuspidal point. There are three disjoint $(-1)$-curves $E_1, E_2, E_3$ on $S_6$. Each of them meets $C$ at a single smooth point of $C$.
Let $\phi:S_6\to \mathbb{P}^2$ be the birational morphism obtained by contracting $E_1,E_2, E_3$.
Let $T'$ be the Zariski tangent line to the cuspidal rational curve $\phi(C)$ at the point $\phi(P)$. There is a unique conic curve $T_0'$ such that it is tangent to $T'$ at the point $\phi(P)$ and  passes through the points $\phi(E_1), \phi(E_2), \phi(E_3)$.  Let $M_i'$ be the line through $\phi(P)$ and $\phi(E_i)$.
Let $T$, $T_0$, and $M_i$ be the proper transforms of $T'$, $T_0'$, and $M_i$ by the birational morphism~$\phi$.
For an arbitrary real number $0<\varepsilon<\frac{1}{2}$, the  $\mathbb{R}$-divisor 
$$(1-2\varepsilon)C+\varepsilon T+\varepsilon T_0+\varepsilon M_1+\varepsilon M_2+ \varepsilon M_3
$$ 
is numerically equivalent to the anticanonical class on $S_6$. 
We take the blow up $\phi:S_5\to S_6$ at the point $P$. Let $E$ be the exceptional curve. 
The proper transforms of $M_1$, $M_2$, and $M_3$ are disjoint $(-1)$-curves on $S_5$. 
Let $\psi: S_5\to \mathbb{P}^1\times\mathbb{P}^1$ be the birational morphism obtained by contracting these three curves. Then
Example~\ref{example:non-Prokhorov example8-0}  shows that the $\mathbb{R}$-divisor above  defines a $(-K_{S_6})$-polar cylinder on $S_6$.

 \end{example}

\begin{example}\label{example:non-Prokhorov example5-2}
Let $S_5$ be a smooth del Pezzo surface of degree $5$. In addition, let $H$ be an effective anticanonical divisor on $S_5$ that consists of one $1$-curve $C$ and one $0$-curve $M$ meeting tangentially at a single point $P$. 
Then there are four $0$-curves $M_1$, $M_2$, $M_3$, $M_4$, other than $M$, passing through the point $P$. They intersect  each other only at the point $P$. For an arbitrary real number $0<\varepsilon<\frac{1}{2}$, the divisor 
$$(1-\varepsilon)M+(1-2\varepsilon)C+\varepsilon M_1+\varepsilon M_2+ \varepsilon M_3+ 
\varepsilon M_4$$ 
defines a $(-K_{S_5})$-polar cylinder. Indeed, to see this, consider  the blow up $\phi:S_4\to S_5$ at the point $P$, and then contract the proper transforms of $M_i$'s and $M$ by $\phi$ to $\mathbb{P}^2$.

 \end{example}

\begin{example}\label{example:non-Prokhorov example5-4}
Let $S_5$ be a smooth del Pezzo surface of degree $5$.  Let $C$ be a cuspidal rational curve in the anticanonical linear system and let $P$ be its cuspidal point. We have four disjoint $(-1)$-curves $E_1,\ldots, E_4$ on $S_5$. Each of them intersects $C$ at a single smooth point of $C$.
Let $\phi:S_5\to \mathbb{P}^2$ be the birational morphism obtained by contracting $E_1,\ldots, E_4$.
Let $M'_0$ be the conic on $\mathbb{P}^2$ determined by  the points $\phi(E_1), \ldots, \phi(E_4)$, and $\phi(P)$. For $i=1,2,3,4$, let $M_i'$ be the line passing through  the points $\phi(E_i)$ and $\phi(P)$.  Let $M_i$ be the proper transform of $M_i'$ by the birational morphism $\phi$. These five curves are $0$-curves on $S_5$ passing through $P$.
For an arbitrary real number $0<\varepsilon<\frac{1}{2}$, the divisor 
$$(1-2\varepsilon)C+\varepsilon M_0+\varepsilon M_1+\varepsilon M_2+ \varepsilon M_3+ 
\varepsilon M_4$$ 
defines a $(-K_{S_5})$-polar cylinder. We consider the blow up $\psi: S_4\to S_5$ at $P$. The proper transforms of $M_i$ by $\psi$ are mutually disjoint $(-1)$-curves. We contract these five $(-1)$-curves to $\mathbb{P}^2$. Then $C$  and  the exceptional curve of $\psi$ become a line and a conic meeting tangentially on $\mathbb{P}^2$. Therefore, the effective  $\mathbb{R}$-divisor above defines a cylinder.
\end{example}

\begin{example}\label{example:non-Prokhorov example5-3} We here use the same  notations $S_5$, $C$,  $P$, $E_1,\ldots, E_4$, and $\phi:S_5\to \mathbb{P}^2$  as in Example~\ref{example:non-Prokhorov example5-4}.
%Let $S_5$ be a smooth del Pezzo surface of degree $5$.  Let $C$ be a cuspidal rational curve in its anticanonical linear system and let $P$ be its cuspidal point. There are four disjoint $(-1)$-curves $E_1,\ldots, E_4$ on $S_5$. Each of them intersects $C$ at a single smooth point of $C$.
%Let $\phi:S_5\to \mathbb{P}^2$ be the birational morphism obtained by contracting $E_1,\ldots, E_4$.
Let $T'_0$ be the Zariski tangent line to the cuspidal rational curve $\phi(C)$ at the point $\phi(P)$. For $i=1,2,3,4$, there is a unique conic curve $T_i'$ such that it is tangent to $T'_0$ at the point $\phi(P)$ and passes through the points $\phi(E_1), \ldots, \phi(E_4)$ except $\phi(E_i)$. Let $T_0$ and $T_i$ be the proper transforms of $T'_0$ and $T_i'$ by the birational morphism $\phi$.
Then, for an arbitrary real number $0<\varepsilon<\frac{1}{3}$, the divisor 
$$(1-3\varepsilon)C+\varepsilon T_0+\varepsilon T_1+\varepsilon T_2+ \varepsilon T_3+ 
\varepsilon T_4$$ 
defines a $(-K_{S_5})$-polar cylinder. 
To see this, let $\phi_1:S_4\to S_5$ be the blow up at the point $P$. Then the exceptional curve $B_1$ of $\phi_1$ intersects the proper transform of $C$ tangentially at a single point $Q$. Let $\phi_2:S_3\to S_4$ be the blow up at the point $Q$ and denote its exceptional curve by $B_2$. Let $\tilde{C}$ and $\tilde{T}_i$ be the proper transforms of $C$ and $T_i$ by the morphism $\phi_1\circ\phi_2$, where $i=0,\ldots, 4$. In addition, let $\tilde{B}_1$ be the proper transform of $B_1$ by $\phi_2$. Then the  $\mathbb{R}$-divisor 
$$(1-3\varepsilon)\tilde{C}+(1-\varepsilon)\tilde{B}_1+(1+\varepsilon)B_2+\varepsilon \tilde{T}_0+\varepsilon  \tilde{T}_1+\varepsilon  \tilde{T}_2+ \varepsilon  \tilde{T}_3+ 
\varepsilon  \tilde{T}_4$$
is numerically equivalent to the anticanonical class on $S_3$. Now, contracting the five disjoint $(-1)$-curves $\tilde{T}_0,\ldots, \tilde{T}_4$ to the Hirzebruch surface $\mathbb{F}_2$, we obtain a birational morphism $\psi_1:S_3\to \mathbb{F}_2$. Put $\overline{C}=\psi_1(\tilde{C})$, 
$\overline{B}_1=\psi_1(\tilde{B}_1)$, and $\overline{B}_2=\psi_1(B_2)$. Then
$$(1-3\varepsilon)\overline{C}+(1-\varepsilon)\overline{B}_1+(1+\varepsilon)\overline{B}_2$$
is numerically equivalent to $-K_{\mathbb{F}_2}$. Note that $\overline{B}_1$ is the $(-2)$-curve on  $\mathbb{F}_2$ and $\overline{C}$ is a $0$-curve. We take the blow up $\phi_3:S\to \mathbb{F}_2$ at a general point of $\overline{C}$. Let $B_3$ be its exceptional curve. In addition,
let $\hat{C}$, 
$\hat{B}_1$, and $\hat{B}_2$ be the proper transforms of  $\overline{C}$, 
$\overline{B}_1$, and $\overline{B}_2$ by $\phi_3$. Finally, by contracting $\hat{C}$ and $\hat{B}_1$ in turn, we obtain a birational morphism $\psi_2:S\to\mathbb{P}^2$.  We immediately  see  that $\psi_3(\hat{B}_2)$ is a cuspidal rational curve and  $\psi_3(B_3)$ is the Zariski tangent line to
$\psi_3(\hat{B}_2)$ at its cuspidal point. Even though 
$$(1+\varepsilon)\psi_3(\hat{B}_2)-3\varepsilon\psi_3(B_3)$$
is a non-effective divisor on $\mathbb{P}^2$, the original divisor on $S_5$ defines a $(-K_{S_5})$-polar cylinder since
 \[S_5\setminus (C \cup T_0\cup\ldots\cup T_4)\cong 
 \mathbb{P}^2\setminus (\psi_3(\hat{B}_2)\cup \psi_3(B_3))\]
 (see Example~\ref{example:cusp}).
 \end{example}

\begin{example}\label{example:non-Prokhorov example4}
Let $S_4$ be a smooth del Pezzo surface of degree $4$. In addition, let $H$ be an effective anticanonical divisor on $S_4$ that consists of one $1$-curve $C$ and one $(-1)$-curve $L$ meeting tangentially at a single point $P$.

By contracting $L$, we see from Example~\ref{example:non-Prokhorov example5-3} that 
there are five $0$-curves $T_1, \ldots, T_5$ passing through the point $P$ such that 
they intersect each other only at $P$ and meet $L$ and $C$ only at~$P$. 
Example~\ref{example:non-Prokhorov example5-3} immediately shows that 
\[(1-3\varepsilon)C+\ (1-\varepsilon)L+\varepsilon\sum_{i=1}^5T_i\]
defines a $(-K_{S_4})$-polar cylinder on $S_4$ for $0<\varepsilon<\frac{1}{3}$.
 \end{example}

\subsection{Proof of Theorem~\ref{theorem:main-easy}}
\label{section:big-degree}

Let
$S_d$ be a smooth del Pezzo surface of degree $d\geqslant 3$ and let $H$ be an ample
$\mathbb{R}$-divisor on $S_d$. 
It is easy to check that $S_d$ always 
contains an $H$-polar cylinder isomorphic to $\mathbb{A}^2$ if $d\geqslant 8$. For this reason, in order to prove Theorem~\ref{theorem:main-easy}, we  assume that
$d\leqslant 7$.

Let $\mu$ be the Fujita invariant of $H$,  $\Delta$ be the Fujita face of $H$, and $r$ be the Fujita rank of $H$.
 %By Theorem~\ref{theorem:smooth-del-Pezzos-degree-1-2-3}, we may assume that the face $\Delta$ is not the origin. 
Let $\phi: S_d\to Z$ be the contraction given by~$\Delta$.

\begin{lemma}[{cf. Example~\ref{example:KPZ-actions-4-9}}]
\label{lemma:deg-3-cylinders-birational-1} Suppose that
$H$ is of type $B(r)$ and
$Z\not\cong\mathbb{P}^1\times\mathbb{P}^1$. If $(d, r)\ne (3,0)$, then $S_d$ contains an
$H$-polar cylinder.
\end{lemma}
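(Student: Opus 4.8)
The plan is to reduce to the anticanonical cylinder of Example~\ref{example:KPZ-actions-4-9}, carrying divisor classes along so that the complement of that cylinder can be reweighted to land on a positive multiple of~$H$.

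First I would normalise the problem. By \eqref{remark:curves-in-face-birational} we have $\mu H\equiv -K_{S_d}+\sum_{i=1}^{r}a_iE_i$ with each $0<a_i<1$, where $E_1,\dots ,E_r$ are the disjoint $(-1)$-curves generating the Fujita face $\Delta$; after rescaling by $1/\mu$ it suffices to produce a cylinder $U$ and an effective $\mathbb{R}$-divisor $D\equiv -K_{S_d}+\sum a_iE_i$ with $S_d\setminus U=\mathrm{Supp}(D)$. Next I would fix a model. Contracting $E_1,\dots ,E_r$ gives $\phi\colon S_d\to Z=S_{d+r}$, a smooth del Pezzo surface of degree $d+r\le 9$, which by hypothesis is not $\mathbb{P}^1\times\mathbb{P}^1$; hence $Z$ is $\mathbb{P}^2$ blown up at $m:=9-d-r\ge 0$ points in general position. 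Since $S_d$ carries no $(-2)$-curve, none of the $r$ points $\phi(E_i)$ lies on an exceptional curve of $Z\to\mathbb{P}^2$, so $S_d\to Z\to\mathbb{P}^2$ is a birational morphism $\sigma$ contracting $E_1,\dots ,E_r$ together with $m$ further disjoint $(-1)$-curves $G_1,\dots ,G_m$, and the $9-d$ images in $\mathbb{P}^2$ are in general position. Writing $\mathcal H=\sigma^{*}(\text{line})$, one has $-K_{S_d}=3\mathcal H-\sum_j G_j-\sum_i E_i$, with all $E_i,G_j$ pairwise orthogonal of square $-1$.

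Now I would split into cases by $m+r=9-d$. If $d\ge 4$, then $m+r\le 5$: take an irreducible conic $C\subset\mathbb{P}^2$ through all $m+r$ marked points and a general line $L$ tangent to $C$, so $L$ misses the marked points. By Example~\ref{example:line-and-conic}, $\mathbb{P}^2\setminus(C\cup L)$ is a cylinder, and since every marked point lies on $C\cup L$ this lifts to $S_d\setminus(\tilde C\cup\tilde L\cup\bigcup G_j\cup\bigcup E_i)$, with $\tilde C=2\mathcal H-\sum G_j-\sum E_i$ and $\tilde L=\mathcal H$; for small $\varepsilon>0$ the divisor
\[
D=(1+\varepsilon)\tilde C+(1-2\varepsilon)\tilde L+\varepsilon\sum_{j=1}^{m}G_j+\sum_{i=1}^{r}(\varepsilon+a_i)E_i
\]
is effective, has support equal to the complement of the cylinder, and satisfies $D\equiv -K_{S_d}+\sum a_iE_i$ (and $r=0$ here recovers Example~\ref{example:KPZ-actions-4-9} verbatim). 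If $d=3$, then $m+r=6$ and, by the hypothesis $(d,r)\ne(3,0)$, $r\ge 1$, so $0\le 5-m\le r$; take $C$ to be the conic through the $m$ points $\sigma(G_j)$ and through $5-m$ of the points $\sigma(E_i)$, leaving exactly one such point $\sigma(E_r)$ off $C$, and let $L$ be one of the two lines through $\sigma(E_r)$ tangent to $C$. Again all six marked points lie on $C\cup L$, so $S_3\setminus(\tilde C\cup\tilde L\cup\bigcup G_j\cup\bigcup E_i)\cong\mathbb{P}^2\setminus(C\cup L)$ is a cylinder; here $\tilde C=2\mathcal H-\sum_j G_j-\sum_{i=1}^{5-m}E_i$, while $\tilde L=\mathcal H-E_r$, or $\mathcal H-E_r-\Theta$ for an exceptional curve $\Theta$ of $\sigma$ met by $\tilde C$ in case the tangency point is itself a marked point. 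In each of these subcases one checks exactly as before that a divisor $(1+\varepsilon)\tilde C+(1-2\varepsilon)\tilde L+\varepsilon\sum_j G_j+\sum_i\gamma_iE_i$ with $0<\varepsilon<a_r/2$ and suitable $\gamma_i>0$ is effective, supported on the complement of the cylinder, and numerically equivalent to $-K_{S_3}+\sum a_iE_i$. Rescaling by $1/\mu$ makes the cylinder $H$-polar.

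I expect the only real subtlety to lie in the case $d=3$: six general points never lie on a conic, so one marked point must be pushed onto the line, and this is precisely where $r\ge 1$ enters — a curve $G_j$ cannot be pushed off the conic without forcing a negative coefficient, whereas an $E_i$ can, because its target coefficient $-(1-a_i)$ is small. Verifying positivity of the weights is the routine computation that uses $0<a_i<1$, and the degenerate case $(d,r)=(3,0)$, for which no such $E_i$ exists, is exactly the one excluded, consistently with the non-existence statement in Theorem~\ref{theorem:main-easy}(2). The case $d\ge 4$ is just Example~\ref{example:KPZ-actions-4-9} with the above bookkeeping.
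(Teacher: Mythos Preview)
Your proof is correct and follows essentially the same approach as the paper: lift the cylinder $\mathbb{P}^2\setminus(C\cup L)$ of Example~\ref{example:line-and-conic} through a blow-down $\sigma\colon S_d\to\mathbb{P}^2$, placing the blown-up points on $C\cup L$ and reweighting so the exceptional coefficients stay positive. The only difference is organizational: the paper gives a single uniform construction for all $r>0$ (always put $P_1=\sigma(E_1)$ on the tangent line $L$ and $P_2,\dots,P_k$ on the conic, which needs at most $8-d\le 5$ points), whereas you split by degree and, for $d\ge 4$, place all marked points on the conic and none on $L$; both work and the verification is the same bookkeeping.
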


\begin{proof} For the case $r=0$ and $d\geqslant 4$, Theorem~\ref{theorem:smooth-del-Pezzos-degree-1-2-3} implies the statement. Therefore, we may assume that $r>0$.

Let $E_1,\ldots,E_r$ be the $(-1)$-curves that generate the face $\Delta$. Then
$$
K_{S_d}+\mu H\equiv\sum_{i=1}^ra_iE_i
$$
for some positive real numbers $a_1,\ldots,a_r$. Note that 
$r\leqslant 9-d$ and $E_1,\ldots,E_r$ are disjoint.

The surface $Z$ is a smooth del Pezzo surface of degree $(d+r)$. Since 
$Z\not\cong\mathbb{P}^1\times\mathbb{P}^1$, either
$Z=\mathbb{P}^2$ or $Z$ is a blow up of $\mathbb{P}^2$ in
$(9-d-r)$  points in general position. Let $\psi\colon
Z\to\mathbb{P}^2$ be the blow up.
 Put
$k=9-d$ and $\sigma=\psi\circ\phi$. If $k>r$, denote the
proper transforms of these $\psi$-exceptional curves on $S_d$ by
$E_{r+1},\ldots,E_k$. Put $P_i=\sigma(E_i)$.

Let $C$ be an irreducible conic in $\mathbb{P}^2$ passing through the points $P_2,\ldots,P_{k}$. Such a conic exists because $k\leqslant 6$.
Let $L$ be a line in $\mathbb{P}^2$ passing through the point $P_1$ and tangent to the conic $C$. Note that $L$ may be tangent to $C$ at one of the points $P_2,\ldots, P_k$, say $P_2$.

For a positive real number $\varepsilon$ we have
$-K_{\mathbb{P}^2}\equiv (1+\varepsilon) C +
(1-2\varepsilon)L$. Hence,

\begin{equation*}
-K_{S_d}\sim\sigma^*(-K_{\mathbb{P}^2})-\sum^{k}_{i=1}E_i\equiv 
\left\{%
\aligned 
& (1+\varepsilon)\tilde{C}+(1-2\varepsilon)\tilde{L}-2\varepsilon E_1+(1-\varepsilon) E_2+\varepsilon\sum_{i=3}^{k} E_i \\
&\mbox{if $L$ meets $C$ at $P_2$};\\%
& (1+\varepsilon)\tilde{C}+(1-2\varepsilon)\tilde{L}-2\varepsilon E_1+\varepsilon\sum_{i=2}^{k} E_i \\
&\mbox{otherwise},\\%
\endaligned\right.
\end{equation*}
where $\tilde{C}$ and $\tilde{L}$ are the proper transforms 
of $C$ and $L$, respectively, by $\sigma$. Thus, we have
$$
H\equiv
\left\{%
\aligned 
&\frac{1}{\mu}\left((1+\varepsilon)\tilde{C}+(1-2\varepsilon)\tilde{L}+(a_1-2\varepsilon)
E_1+(a_2+1-\varepsilon)
E_2+\sum_{i=3}^{r} (a_i+\varepsilon)E_i+\varepsilon\sum_{i=r+1}^{k}
E_i\right) \\ &\mbox{if $L$ meets $C$ at $P_2$};\\
&\frac{1}{\mu}\left((1+\varepsilon)\tilde{C}+(1-2\varepsilon)\tilde{L}+(a_1-2\varepsilon)
E_1+\sum_{i=2}^{r} (a_i+\varepsilon)E_i+\varepsilon\sum_{i=r+1}^{k}
E_i\right)
\\ &\mbox{otherwise}.\\
\endaligned\right.
$$
For $0<\varepsilon<\frac{a_1}{2}$,  this defines an $H$-polar cylinder because 
$$S_d\setminus (\tilde{C} \cup \tilde{L}\cup E_1\cup \ldots\cup E_k)\cong \mathbb{P}^2\setminus (C\cup L).$$
\end{proof}

As in  the case where $L$ meets $C$ at $P_2$ in the proof of Lemma~\ref{lemma:deg-3-cylinders-birational-1}, it can happen that we should separately deal with the case where two curves meet at one of the centers of blow ups. However, in such a case, we always obtain a bigger coefficient for the exceptional curve over the center than when it is not the case. For this reason, in the sequel, we always omit the proof for such a special case. The proof for a non-special case  works  almost verbatim for such a special case.

\begin{lemma}
\label{lemma:deg-3-cylinders-birational-2} 
Suppose that
$H$ is of type $B(8-d)$ and
$Z\cong\mathbb{P}^1\times\mathbb{P}^1$. Then $S_d$ contains an
$H$-polar cylinder.
\end{lemma}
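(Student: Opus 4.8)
The plan is to transfer the $(-K_{\mathbb{P}^1\times\mathbb{P}^1})$-polar cylinder of Example~\ref{example:non-Prokhorov example8-1} up to $S_d$ along the contraction $\phi$ of the Fujita face. Since $H$ is of type $B(8-d)$ and $Z\cong\mathbb{P}^1\times\mathbb{P}^1$ has degree $8$, the morphism $\phi$ contracts exactly the $8-d$ disjoint $(-1)$-curves $E_1,\ldots,E_{8-d}$ spanning $\Delta$; put $\sigma=\phi\colon S_d\to\mathbb{P}^1\times\mathbb{P}^1$ and $P_i=\sigma(E_i)$, so that $P_1,\ldots,P_{8-d}$ are $8-d\leqslant 5$ points in general position and, by \eqref{remark:curves-in-face-birational},
$$
K_{S_d}+\mu H\equiv\sum_{i=1}^{8-d}a_iE_i
$$
with $0<a_i<1$ for every $i$.

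First I would choose an irreducible curve $C$ on $\mathbb{P}^1\times\mathbb{P}^1$ of bidegree $(1,2)$ passing through all of $P_1,\ldots,P_{8-d}$. Such a curve has arithmetic genus $0$, hence is smooth, so $\mult_{P_i}C=1$ for each $i$. As in Example~\ref{example:non-Prokhorov example8-1}, let $T$ be a curve of bidegree $(1,0)$ tangent to $C$ at a point $P$ and let $L$ be the curve of bidegree $(0,1)$ through $P$; then $(\mathbb{P}^1\times\mathbb{P}^1)\setminus(C\cup T\cup L)$ is a cylinder, and therefore
$$
S_d\setminus\bigl(\tilde{C}\cup\tilde{T}\cup\tilde{L}\cup E_1\cup\cdots\cup E_{8-d}\bigr)\ \cong\ (\mathbb{P}^1\times\mathbb{P}^1)\setminus(C\cup T\cup L)
$$
is a cylinder, where $\tilde{C},\tilde{T},\tilde{L}$ denote proper transforms on $S_d$. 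Starting from $-K_{\mathbb{P}^1\times\mathbb{P}^1}\equiv(1-\varepsilon)C+(1+\varepsilon)T+2\varepsilon L$ with $0<\varepsilon<1$, pulling back along $\sigma$ (so $\sigma^*C=\tilde{C}+\sum_iE_i$, while $\sigma^*T=\tilde{T}$ and $\sigma^*L=\tilde{L}$ in the generic case where $T\cup L$ misses the $P_i$), and using $-K_{S_d}=\sigma^*(-K_{\mathbb{P}^1\times\mathbb{P}^1})-\sum_iE_i$, one gets
$$
\mu H\equiv -K_{S_d}+\sum_{i=1}^{8-d}a_iE_i\equiv(1-\varepsilon)\tilde{C}+(1+\varepsilon)\tilde{T}+2\varepsilon\tilde{L}+\sum_{i=1}^{8-d}(a_i-\varepsilon)E_i .
$$
Choosing $0<\varepsilon<\min\{1,a_1,\ldots,a_{8-d}\}$ makes every coefficient positive, so $H$ is numerically equivalent to an effective $\mathbb{R}$-divisor whose support is the boundary above; this exhibits an $H$-polar cylinder. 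No genericity of $P$ is actually required: a coincidence $P=P_i$ or $P_i\in T\cup L$ only increases the coefficient of the corresponding $E_i$, and such degenerate configurations are handled exactly as explained after Lemma~\ref{lemma:deg-3-cylinders-birational-1}.

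The main obstacle is the existence of an \emph{irreducible} curve of bidegree $(1,2)$ through $P_1,\ldots,P_{8-d}$. The linear system $|(1,2)|$ on $\mathbb{P}^1\times\mathbb{P}^1$ has dimension $5$, so when $8-d\leqslant 4$ the $(1,2)$-curves through the points form a positive-dimensional system whose general member is irreducible (an irreducible component of a bidegree-$(1,2)$ curve has bidegree $(1,0)$, $(0,1)$ or $(1,1)$, and the reducible members form a proper closed subset of the system); when $8-d=5$ the system consists of a single curve, which is non-empty by Riemann--Roch on $S_3$ and is necessarily irreducible, since curves of bidegrees $(1,0)$, $(0,1)$, $(1,1)$ cannot jointly pass through $5$ points in general position. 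Once this curve is produced, the remaining steps are the bookkeeping displayed above, so verifying these general-position statements is the only delicate point.
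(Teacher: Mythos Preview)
Your proof is correct and essentially identical to the paper's: both pull back the $(-K_{\mathbb{P}^1\times\mathbb{P}^1})$-polar cylinder of Example~\ref{example:non-Prokhorov example8-1} along $\phi$, using an irreducible rational curve of bidegree $(1,2)$ (the paper writes $(2,1)$, merely swapping the factors) through the blown-up points, a tangent ruling, and the transverse ruling through the tangency point, and then choose $\varepsilon<\min_i a_i$ to make all coefficients positive. Your added justification for the irreducibility of the $(1,2)$-curve through up to five general points is a point the paper leaves implicit.
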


\begin{proof} 
Let $E_1,\ldots,E_r$ be the $(-1)$-curves that generate the face $\Delta$. Note that $r=8-d$.
Then
$$
K_{S_d}+\mu H\equiv\sum_{i=1}^{r} a_iE_i
$$
for some positive real numbers $a_1,\ldots,a_r$. The $(-1)$-curves  $E_1,\ldots,E_r$ are disjoint.
Put $P_i=\phi(E_i)$.

Since $r\leqslant 5$, there is an irreducible curve  $C$ of bidegree $(2,1)$  in $\mathbb{P}^1\times\mathbb{P}^1$ passing through the points $P_1,\ldots, P_{r}$. Let $L$ be a curve of bidegree $(0,1)$  in $\mathbb{P}^1\times\mathbb{P}^1$ that is tangent to the curve $C$. Let $P$ be the intersection point of $C$ and $L$.
Then there is a unique curve $M$ of bidegree $(1,0)$  in $\mathbb{P}^1\times\mathbb{P}^1$ passing through the point $P$.

For a positive real number $\varepsilon$ we have
$-K_{\mathbb{P}^1\times\mathbb{P}^1}\equiv(1-\varepsilon) C +
(1+\varepsilon)L+2\varepsilon M$. Hence,
$$
-K_{S_d}\sim\phi^*(-K_{\mathbb{P}^1\times\mathbb{P}^1})-\sum_{i=1}^{r} E_i\equiv (1-\varepsilon)\tilde{C}+(1+\varepsilon)\tilde{L}+2\varepsilon \tilde{M}-\varepsilon\sum_{i=1}^{r} E_i,%
$$
where $\tilde{C}$, $\tilde{L}$, and $\tilde{M}$ are the proper transforms 
of $C$, $L$, and $M$, respectively, by $\phi$. Thus, we have
$$
H\equiv\frac{1}{\mu}\left((1-\varepsilon)\tilde{C}+(1+\varepsilon)\tilde{L}+2\varepsilon \tilde{M}+\sum_{i=1}^{r} (a_i-\varepsilon)E_i\right).
$$
Furthermore, we see immediately that
$$S_d\setminus (\tilde{C} \cup \tilde{L}\cup \tilde{M}\cup E_1\cup \ldots\cup E_r)\cong \mathbb{P}^1\times\mathbb{P}^1\setminus (C\cup L\cup M).$$
By taking $0<\varepsilon<\min\{a_1, \ldots, a_r\}$  we obtain
 an $H$-polar cylinder on $S_d$ (see Example~\ref{example:non-Prokhorov example8-1}).
\end{proof}
\begin{lemma}
\label{lemma:deg-3-cylinders-conic-bundle} Suppose that
 $H$ is of type $C(9-d)$.  Then
$S_d$ contains an $H$-polar cylinder.
\end{lemma}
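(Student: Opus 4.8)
The plan is to imitate the strategy of Lemmas~\ref{lemma:deg-3-cylinders-birational-1} and~\ref{lemma:deg-3-cylinders-birational-2}: transport an explicit anticanonically polar cylinder from a minimal model along the conic bundle structure $\phi\colon S_d\to Z\cong\mathbb{P}^1$ and then perturb by the $(-1)$-curves that span the Fujita face to obtain an $H$-polar cylinder. Since $H$ is of type $C(9-d)$, we have $r=9-d$ and, by~\eqref{lemma:curves-in-face}, there are a $0$-curve $B$ and disjoint $(-1)$-curves $E_1,\dots,E_{8-d}$, each lying in a distinct reducible fibre of $\phi$, with $K_{S_d}+\mu_H H\equiv aB+\sum_{i=1}^{8-d}a_iE_i$ for some $a>0$ and $0\leqslant a_i<1$. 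First I would contract the $(-1)$-curves $E_1,\dots,E_{8-d}$ together with one more $(-1)$-curve in each of the remaining reducible fibres so as to land on a relatively minimal conic bundle, i.e.\ a Hirzebruch surface $\mathbb{F}_m$; because $B$ is a $0$-curve it becomes a fibre there, and $B$ plus the fibre structure is exactly the data of the ruling. The upshot is that $S_d$ is obtained from $\mathbb{F}_1$ (or $\mathbb{P}^1\times\mathbb{P}^1$) by blowing up $9-d$ points, where the blown-up points are chosen compatibly with the fibration.

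Next I would produce on the minimal model a cylinder of the type already constructed in the examples — concretely, take a cuspidal anticanonical curve $C$ and the fibre $T$ through the cusp, as in Examples~\ref{example:non-Prokhorov example8-2}, \ref{example:non-Prokhorov example8-3} (on $\mathbb{F}_1$ or $\mathbb{P}^1\times\mathbb{P}^1$), whose complement together with a suitable section is isomorphic to $\mathbb{A}^1\times\mathbb{A}^1_*$. Pulling this configuration back to $S_d$, the complement
$$S_d\setminus\bigl(\tilde C\cup\tilde B\cup\text{(the chosen $(-1)$-curves)}\bigr)$$
is again a cylinder, being isomorphic to the minimal-model cylinder minus nothing new. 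The remaining task is purely numerical: write $-K_{S_d}$ as an effective $\mathbb{R}$-combination of $\tilde C$, $\tilde B$, the section, and all the relevant exceptional curves with a small parameter $\varepsilon$ (the coefficients on $\mathbb{F}_m$ being $(1-\varepsilon)C+(1+\varepsilon)T+\varepsilon(\text{section})$ as in Example~\ref{example:non-Prokhorov example8-3}, $(1-\varepsilon)C+(1+\varepsilon)T+2\varepsilon M$ as in~\ref{example:non-Prokhorov example8-2}), then add $\tfrac{1}{\mu_H}(aB+\sum a_iE_i)$ and rescale by $\tfrac1{\mu_H}$. Since $a>0$ and $a_i<1$, for $\varepsilon$ small enough all coefficients of $\tilde C,\tilde B$, the section and the $E_i$ stay positive, and every curve appearing lies in the complement, so this effective divisor is numerically equivalent to $H$ and its support has the cylinder as complement.

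The one place requiring care — and the step I expect to be the main obstacle — is the bookkeeping when $\tilde\Gamma$, the section we add to make the complement a cylinder, either coincides with one of the $E_i$ or passes through one of the blown-up points: in those cases, exactly as in the parenthetical remark after Lemma~\ref{lemma:deg-3-cylinders-birational-1}, the exceptional curve over such a point acquires a \emph{larger} coefficient than in the generic case, which only helps positivity, so one may safely treat only the generic configuration. One also has to check that the $9-d$ points can be chosen in general position (so that $S_d$ is del Pezzo) while still lying on the fixed fibres and on the cuspidal anticanonical curve $C$; this is a dimension count that goes through because $d\leqslant 7$ leaves enough moduli. Granting this, the computation closes and $S_d$ carries an $H$-polar cylinder.
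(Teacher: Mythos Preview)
Your overall strategy---contract to a degree-$8$ surface, import a cylinder from there, and perturb---is the same as the paper's.  But there is a genuine gap in your positivity argument, and it is exactly the point where the paper's proof does real work.

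You write ``Since $a>0$ and $a_i<1$, for $\varepsilon$ small enough all coefficients \ldots\ stay positive.''  The condition $a_i<1$ is irrelevant here; what you need is $a_i>0$.  By~\eqref{lemma:curves-in-face} the $a_i$ are only \emph{non-negative}, and for divisors of length $\ell_H<8-d$ some of them are exactly zero.  In your construction (pull back $(1-\varepsilon)C+\cdots$ with $C$ passing through all the $P_i$), the coefficient of $E_i$ in $\mu H$ becomes $a_i-\varepsilon$, which is negative whenever $a_i=0$, regardless of how small $\varepsilon$ is.  So your divisor is not effective and the argument fails.  The paper deals with this by splitting into cases: when all $a_i>0$ and $W\cong\mathbb{F}_1$ it contracts further to $\mathbb{P}^2$ and uses a conic through the $P_i$ with a tangent line (Example~\ref{example:line-and-conic}); when some $a_i=0$ (or $W\cong\mathbb{P}^1\times\mathbb{P}^1$), it instead takes a $(1,2)$-curve $C$ through all the $P_i$ with coefficient $(1{+}\varepsilon)$, so that each $E_i$ receives $a_i+\varepsilon>0$ (Example~\ref{example:non-Prokhorov example8-1}).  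A further trick---replacing $E_m$ by the other $(-1)$-curve $E_m'$ in its fibre---lets one always reduce the $a_m=0$ case to $W\cong\mathbb{P}^1\times\mathbb{P}^1$.

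A second, smaller issue: your ``dimension count'' for the cuspidal curve is phrased as if the blown-up points were yours to choose, but $S_d$ is given, so the $P_i$ are fixed once the $E_i$ are.  You would need to argue that a cuspidal member of $|-K_W|$ through the given $P_i$ exists; this is plausible but not what you wrote.  The paper sidesteps this entirely by using conics and $(1,2)$-curves rather than cuspidal anticanonical curves.
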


\begin{proof}
If the contraction $\phi$ is a conic bundle, then, as in \eqref{lemma:curves-in-face},  we may write
$$
K_{S_d}+\mu H\equiv aB+\sum_{i=1}^{m}a_iE_i
$$
where $B$ is an irreducible fiber of $\phi$, $E_i$'s are disjoint $(-1)$-curves in fibers of $\phi$, $a$ is a positive real number, $a_i$'s are non-negative real numbers, and $m=8-d$. We may assume that $a_1\geqslant a_2\geqslant \ldots \geqslant a_m$. 
Let $\phi_1: S_d\to W$ be the birational morphism obtained by contracting the disjoint $(-1)$-curves $E_1,\ldots, E_m$.
Thus $W$  is a smooth del Pezzo surface of degree $8$, hence either $W\cong \mathbb{P}^1\times
 \mathbb{P}^1$   or $W\cong \mathbb{F}_1$.

\bigskip
\textbf{Case 1.}  $a_m\ne 0$ and $W\cong \mathbb{F}_1$.
\medskip

There is a $(-1)$-curve $E$ on $S_d$ whose image by $\phi_1$ is the unique $(-1)$-curve  on $W$.  Let $\psi:W\to\mathbb{P}^2$ be the birational morphism given by contracting $\phi_1(E)$. Put $\sigma=\psi\circ \phi_1$.  Denote the points $\sigma(E_i)$ by $P_i$, $i=1, \ldots, m$, the point $\sigma(E)$ by~$P$, and the line $\sigma (B)$ by $M$.  Note that the line $M$ passes through the point $P$.

Let $C$ be the conic passing through the  points $P_1,\ldots, P_m$. Such a conic exists because $m\leqslant 5$. Let $L$ be a line that passes through the point $P$ and that is tangent to the conic $C$. We may assume that the line $L$ is different from $M$.

 For any real number $\varepsilon$ we have
$-K_{\mathbb{P}^2}\equiv (1-\varepsilon) C +
(1+2\varepsilon+a)L-aM$. Hence,
\[
\begin{split}
-K_{S_d}&\sim\sigma^*(-K_{\mathbb{P}^2})-\sum_{i=1}^{m} E_i-E\\
&\equiv (1-\varepsilon)\tilde{C}+(1+2\varepsilon+a)\tilde{L}+2\varepsilon E-aB -\varepsilon\sum_{i=1}^{m} E_i,\\
\end{split}
\]
where $\tilde{C}$ and $\tilde{L}$ are the proper transforms of $C$ and $L$, respectively, by $\sigma$.
Thus, we have
$$
H\equiv\frac{1}{\mu}\left( (1-\varepsilon)\tilde{C}+(1+2\varepsilon+a)\tilde{L}+2\varepsilon E +\sum_{i=1}^{m} (a_i-\varepsilon)E_i   \right).
$$
By taking a sufficiently small  positive real number $\varepsilon$ we obtain
 an $H$-polar cylinder on $S_d$.

 \bigskip
 \textbf{Case 2.}  Either $a_m\ne 0$ with $W\cong \mathbb{P}^1\times\mathbb{P}^1$ or $a_m= 0$.
 \medskip
 
 We first assume that $W\cong \mathbb{P}^1\times\mathbb{P}^1$.
 Denote the points $\phi_1(E_i)$ by $P_i$, $i=1, \ldots, m-1$, the point $\phi_1(E_m)$ by~$P$, and the curve $\phi_1 (B)$ by $M$.  The curve $M$ is a curve of bidegree $(0,1)$ or $(1,0)$ on $\mathbb{P}^1\times\mathbb{P}^1$.  We may assume that it is of bidegree $(0,1)$.
 
 There is a unique curve  $C$ of bidegree $(1,2)$ passing through the points $P, P_1,\ldots, P_{m-1}$.  There is a curve  $L$
 of bidegree $(1,0)$ that is tangent to $C$. Let $Q$ be the point at which $L$ meets $C$ and let $N$ be the curve of bidegree $(0,1)$ passing through the point $Q$.
 
 For an arbitrary real number $\varepsilon$ we have
$-K_{\mathbb{P}^1\times\mathbb{P}^1}\equiv (1+\varepsilon) C +
(1-\varepsilon)L+(a-2\varepsilon)N-aM$. Hence,
\[
\begin{split}
-K_{S_d}&\sim\phi_1^*(-K_{\mathbb{P}^1\times\mathbb{P}^1})-E_m-\sum_{i=1}^{m-1} E_i\\
&\equiv(1+\varepsilon) \tilde{C} +
(1-\varepsilon)\tilde{L}+(a-2\varepsilon)\tilde{N}-aB+\varepsilon E_m+\sum_{i=1}^{m-1} \varepsilon E_i,\\
\end{split}
\]
where $\tilde{C}$, $\tilde{L}$, and $\tilde{N}$ are the proper transforms of $C$, $L$, $N$, respectively, by $\phi_1$.
Thus, we have
$$
H\equiv\frac{1}{\mu}\left( (1+\varepsilon) \tilde{C} +
(1-\varepsilon)\tilde{L}+(a-2\varepsilon)\tilde{N}+(a_m+\varepsilon) E_m+\sum_{i=1}^{m-1} (a_i+\varepsilon) E_i,  \right).
$$
By taking a sufficiently small  positive real number $\varepsilon$ we obtain
 an $H$-polar cylinder on $S_d$ (see Example~\ref{example:non-Prokhorov example8-1}).
 
 Now we assume that $W\cong \mathbb{F}_1$ and $a_m=0$.
 Let $E_m'$ be the other $(-1)$-curve in the fiber of~$\phi$ containing the $(-1)$-curve $E_m$.  Let $\phi_2:S_d\to\mathbb{P}^1\times\mathbb{P}^1$ be the birational morphism given by contracting the $(-1)$-curves $E_1,\ldots, E_{m-1}, E_m'$. 
 Since $a_m=0$, after replacing $\phi_1$ and $E_m$ by $\phi_2$ and~$E_m'$, we see immediately that the previous argument also works for this case.
  \end{proof}
Theorem~\ref{theorem:main-easy} immediately follows from Theorem~\ref{theorem:smooth-del-Pezzos-degree-1-2-3} and
Lemmas~\ref{lemma:deg-3-cylinders-birational-1}, ~\ref{lemma:deg-3-cylinders-birational-2}, and~\ref{lemma:deg-3-cylinders-conic-bundle}.

\section{Absence of cylinders}
\subsection{The main obstruction}
\label{section:obstruction}
We here refine Remark~\ref{observation}  for a smooth rational surface as below. 

Let $S$ be a smooth rational surface and let $A$ be a big
$\mathbb{R}$-divisor on $S$. Suppose that $S$ contains an $A$-polar
cylinder, i.e., there is an open affine subset $U\subset S$ and an
effective $\mathbb{R}$-divisor $D$ such that $D\equiv
A$, $U=S\setminus\mathrm{Supp}(D)$, and $U\cong \mathbb{A}^1\times Z$ for some smooth rational affine curve $Z$. Put
$D=\sum_{i=1}^{n}a_i C_i$, where each $C_i$ is an irreducible
reduced curve and each $a_i$ is a positive real number. Let $\mu$ be the Fujita invariant of $A$.

As in \eqref{diagram}, the natural projection $p_Z:U\cong \mathbb{A}^1\times Z\to Z$ induces a
rational map $\psi\colon S\dasharrow\mathbb{P}^1$ given by a 
pencil $\mathcal{L}$ on the surface $S$. If the base locus $\mathrm{Bs}(\mathcal{L})$ of the pencil  is non-empty, then it must consist of a single point
because $\psi^{*}(Q)\cong \mathbb{P}^1$ for a general point $Q$ of $\mathbb{P}^1$ and $\mathrm{Supp}(\psi^{*}(Q))\setminus \mathrm{Supp}(\mathrm{Bs}(\mathcal{L}))$ contains an affine line. 
%Hence either $\mathcal{L}$ is base-point-free or its base locus consists of a single point.

\begin{theorem}
\label{theorem:obstruction} Suppose that  the base locus of $\mathcal{L}$ consists of a
point $P$ on $S$. Then, for every
effective $\mathbb{R}$-divisor $B$ on $S$ such that
$\mathrm{Supp}(B)\subset\mathrm{Supp}(D)$ and $K_{S}+B$ is
pseudo-effective, the log pair $(S, B)$ is  not log canonical at $P$.
In particular, $(S, \mu D)$ is not log canonical at $P$.
\end{theorem}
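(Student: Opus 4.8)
The plan is to use the cylinder structure to construct, for a general point $Q \in \mathbb{P}^1$, a divisor that is close to $B$ but whose support avoids $P$, and then to derive a contradiction from adjunction along a suitable fiber. Concretely: resolve the pencil $\mathcal{L}$ by a birational morphism $\pi\colon \tilde{S} \to S$ so that the induced map $\phi\colon \tilde{S} \to \mathbb{P}^1$ is a morphism with general fiber $\tilde{F} \cong \mathbb{P}^1$, as in diagram \eqref{diagram}. Since $\mathrm{Bs}(\mathcal{L}) = \{P\}$, every exceptional curve of $\pi$ lies over $P$, and the key feature of the cylinder is that for a general fiber $F = \pi(\tilde{F})$ of $\mathcal{L}$, the open part $F \setminus \mathrm{Supp}(D)$ is an affine line — equivalently, $F$ meets $\mathrm{Supp}(D)$ set-theoretically only at $P$ (apart from possibly one more point, but I would arrange the count so that $F \cdot D$ is concentrated appropriately).

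The main step is the numerical computation. First I would show $F^2 > 0$: since $F$ moves in a pencil with a single base point, $F^2 \geq 1$. Then, pulling back $K_S + B$ to $\tilde{S}$ and intersecting with $\tilde{F}$, we get
$$
\bigl(K_S + B\bigr) \cdot F = \bigl(K_{\tilde{S}} + \tilde{B} + \textstyle\sum_j c_j E_j\bigr) \cdot \tilde{F} = -2 + \tilde{B} \cdot \tilde{F} + \sum_j c_j (E_j \cdot \tilde{F}),
$$
where $\tilde B$ is the proper transform of $B$ and the $E_j$ are the $\pi$-exceptional curves with discrepancy coefficients $c_j$ coming from $K_{\tilde S} + \tilde B + \sum c_j E_j = \pi^*(K_S+B)$. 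Because $K_S + B$ is pseudo-effective and $\tilde{F}$ is a general (hence nef-tested) fiber, the left side is $\geq 0$. On the right, $\tilde{B} \cdot \tilde{F} = B \cdot F - (\text{multiplicity of } B \text{ at } P)\cdot(\text{stuff})$, so after unwinding, the inequality forces a lower bound on $\mathrm{mult}_P(B)$ relative to the local intersection data of $D$ along $F$. The plan is to combine this with the fact that $F \setminus \mathrm{Supp}(D) \cong \mathbb{A}^1$, which pins down $F \cdot D$ and $F \cdot C_i$ precisely (a general fiber meets each component of $D$ only over $P$), so that $B \cdot F$ is controlled entirely by the behaviour at $P$. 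Assembling these, I expect to deduce that if $(S, B)$ were log canonical at $P$, then $\mathrm{mult}_P(B)$ (or more precisely the log canonical threshold condition) would be violated by the adjunction inequality — i.e. the numerical identity above would give $(K_S + B)\cdot F < 0$, contradicting pseudo-effectivity.

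A cleaner route, which I would actually follow, is to use inversion of adjunction / the connectedness-type argument directly on $\tilde{S}$: if $(S, B)$ is log canonical at $P$, then on $\tilde{S}$ the log pair $(\tilde{S}, \tilde{B} + \sum_j c_j E_j)$ is log canonical along $\pi^{-1}(P)$ with all $c_j \leq 1$, so the coefficient of the (unique, say) exceptional curve $E_r$ that is a section of $\phi$ — this is precisely the $\tilde\Gamma = E_r$ case of the discussion preceding Remark~\ref{observation} — satisfies $c_r \leq 1$. Then the adjunction computation for $\tilde{F}$ gives $(K_S + B)\cdot F = -2 + c_r + (\text{non-negative terms from other components meeting } \tilde F)$. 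The other components of $\tilde B + \sum c_j E_j$ meeting $\tilde F$ are, by the cylinder structure, contained in fibers of $\phi$ except for the section $\tilde\Gamma$; a general $\tilde F$ meets the section once and meets the rest... here is where I must be careful: a general fiber $\tilde F$ meets only $\tilde\Gamma$ among the "bad" curves, because all other $\tilde C_i$ and $E_j$ sit inside special fibers. Hence $(K_S+B)\cdot F = -2 + c_r < 0$ since $c_r \leq 1$, contradicting pseudo-effectivity of $K_S+B$.

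The hard part — and the place I would spend the most care — is justifying that $\tilde\Gamma$ (the section of $\phi$ appearing in the complement) genuinely appears among the curves with positive coefficient in $\pi^*(K_S+B)$ with coefficient $\leq 1$, i.e. tying the abstract log canonicity hypothesis at $P$ to the concrete coefficient $c_r$ on that section, and arguing that a general fiber $\tilde F$ is disjoint from every other relevant curve. This requires the precise structure established around diagram \eqref{diagram}: $\tilde\Gamma$ is one of $\tilde C_1,\dots,\tilde C_n, E_1,\dots,E_r$, and all the others are components of fibers of $\phi$. Given that structure, the argument is just adjunction on $\tilde F$ plus pseudo-effectivity, and the final sentence ($(S,\mu D)$ not log canonical at $P$) follows by taking $B = \mu D$, which is effective, has support in $\mathrm{Supp}(D)$, and is numerically equivalent to $-K_S$ by definition of the Fujita invariant, hence $K_S + \mu D \equiv 0$ is pseudo-effective.
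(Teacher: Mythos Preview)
Your proposal is correct and follows essentially the same route as the paper: the paper's proof simply says ``same as the explanation for Remark~\ref{observation}'', and your third paragraph unwinds precisely that computation --- on the resolution $\pi\colon\tilde S\to S$, the section $\tilde\Gamma$ is an exceptional curve $E_r$ over $P$, a general fiber $\tilde F$ meets only $\tilde\Gamma$ among the relevant curves, so $(K_S+B)\cdot F=-2+c_r$; pseudo-effectivity forces $c_r\geqslant 2$, hence $(S,B)$ is not log canonical at $P$.

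One small slip: in your last sentence you say $\mu D$ ``is numerically equivalent to $-K_S$''. That is not generally true --- it holds only when the Fujita rank is $0$. What you need (and what is true by the definition of the Fujita invariant) is merely that $K_S+\mu D\equiv K_S+\mu A$ lies on the boundary of the pseudo-effective cone, hence is pseudo-effective; that is already enough to apply the main statement with $B=\mu D$.
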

\begin{proof}
The proof is the same as the explanation for Remark~\ref{observation}.
We may assume that the exceptional divisor of $\pi$ lies over the point $P$.
Since $\mathrm{Supp}(B)\subset\mathrm{Supp}(D)$, the divisor $B$ can be written as
$B=\sum_{i=1}^{n}b_i C_i$,
where $b_i$'s are non-negative real numbers. The remaining parts are exactly  the same as in the explanation for Remark~\ref{observation}.
 \end{proof}

\begin{theorem}
\label{theorem:del-Pezzo-tigers} Let $S_d$ be a smooth del Pezzo
surface of degree $d\leqslant 3$ and let $D$ be an effective
 $\mathbb{R}$-divisor on $S_d$ such that  $D\equiv -K_{S_d}$.
If  the log pair
$(S_d,D)$ is not log canonical at a point~$P$, then there exists a
divisor $T$ in the anticanonical linear system $|-K_{S_d}|$ such
that the log pair $(S_d,T)$ is not log canonical at the point $P$ and  $\mathrm{Supp}(T)\subset\mathrm{Supp}(D)$.
%the support of $D$ contains all the irreducible components of $\mathrm{Supp}(T)$.
\end{theorem}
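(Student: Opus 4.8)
The plan is to localize the problem at $P$, reduce to a configuration of a few low-degree curves, and then produce $T$ by a case analysis over the possible configurations. First I would replace $D$ by the sum $D_P$ of those components of $D$ that pass through $P$: discarding effective components supported away from $P$ changes neither $\operatorname{mult}_P$ nor log canonicity at $P$, so $(S_d,D_P)$ is still not log canonical at $P$, and since on a smooth surface $\operatorname{mult}_PB\le 1$ forces $(S_d,B)$ to be log canonical at $P$, we get $\operatorname{mult}_PD_P=\operatorname{mult}_PD>1$. Because $-K_{S_d}$ is ample and $D\equiv -K_{S_d}$ we also have $D_P\cdot(-K_{S_d})\le D\cdot(-K_{S_d})=(-K_{S_d})^2=d\le 3$, while $C\cdot(-K_{S_d})\ge 1$ for every curve $C$ on $S_d$. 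Hence $D_P$ has at most three components of non-negligible coefficient, each of anticanonical degree at most $3$, and by adjunction together with the Hodge index inequality $\bigl(C\cdot(-K_{S_d})\bigr)^2\ge C^2\,(-K_{S_d})^2$ each such component is a line (a $(-1)$-curve), a conic, or a member of $|-K_{S_d}|$, possibly reducible or non-reduced.

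Next I would classify, using $\operatorname{mult}_PD_P>1$ and $D_P\cdot(-K_{S_d})\le 3$, the configurations of lines, conics and anticanonical curves through $P$ for which $(S_d,D_P)$ can actually fail to be log canonical at $P$. This uses the precise log canonical thresholds of the relevant local models — a cusp ($\tfrac56$), a tacnode or higher tangency ($\tfrac12+\tfrac1{2k}$), an ordinary triple point ($\tfrac23$), a pair of smooth branches of prescribed tangency — both to discard configurations with $\operatorname{mult}_PD_P>1$ for which $(S_d,D_P)$ is nevertheless log canonical at $P$, and to extract from every $D_P$ for which $(S_d,D_P)$ is not log canonical at $P$ an explicit reduced subconfiguration $\Lambda\subseteq\operatorname{Supp}(D_P)$ of total anticanonical degree $d$ which already witnesses the failure of log canonicity there: for instance an anticanonical curve with a cusp or worse at $P$, a line of $D_P$ with a conic or anticanonical curve of $D_P$ tangent to it at $P$, or, when $d=3$, a line counted twice together with a residual line, or three concurrent lines. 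Assembling the curves of $\Lambda$ with suitable positive integer multiplicities gives a divisor $T$ with $\operatorname{Supp}(T)=\operatorname{Supp}(\Lambda)\subseteq\operatorname{Supp}(D_P)\subseteq\operatorname{Supp}(D)$; since the anticanonical degrees sum to $d$ and numerical and linear equivalence coincide on a smooth del Pezzo surface, $T$ lies in $|-K_{S_d}|$; and as $T$ carries at $P$ at least the singularity that made $(S_d,D_P)$ fail to be log canonical there, $(S_d,T)$ is not log canonical at $P$.

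The step I expect to be the main obstacle is this last classification-and-assembly: checking in each configuration that the required combination of components of $D_P$ really occurs as an honest member of the anticanonical system. For $d=3$ this is easy because $|-K_{S_3}|$ is very ample and its members realize all the needed singular plane cubics; for $d=2$ one argues through the double cover $S_2\to\mathbb{P}^2$ branched along a quartic; and for $d=1$, where $|-K_{S_1}|$ is only a pencil with a single base point, one must keep track of whether that base point is $P$ and exploit the rational elliptic fibration obtained from $|-K_{S_1}|$ to exhibit the anticanonical member supported inside $\operatorname{Supp}(D)$.
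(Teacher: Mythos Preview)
The paper does not supply an in-text proof here; it simply refers to \cite[Theorem~1.12]{CheltsovParkWon} and remarks that the argument given there for $\mathbb{Q}$-divisors goes through verbatim for $\mathbb{R}$-divisors. So there is no internal proof to match against, only the cited one, and your sketch should be measured against that.

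Your reduction step contains a genuine gap. From $D_P\cdot(-K_{S_d})\le d\le 3$ and $C\cdot(-K_{S_d})\ge 1$ for every curve $C$ you infer that each ``non-negligible'' component of $D_P$ has anticanonical degree at most $3$, hence is a line, a conic, or an anticanonical curve. This does not follow: a component $C$ with $-K_{S_d}\cdot C=k$ arbitrarily large can appear in $D_P$ with coefficient $<d/k$ and still contribute substantially to $\operatorname{mult}_P D_P$ (for instance if $C$ is itself singular at $P$). Nothing you wrote excludes such components, and their presence would wreck the ``at most three low-degree curves through $P$'' picture on which your subsequent case analysis rests. Even restricting to curves of anticanonical degree $\le 3$, your list is incomplete: on a cubic surface an irreducible rational curve with $-K_S\cdot C=3$ and $C^2=1$ (a twisted cubic) is neither a line, nor a conic, nor an anticanonical curve, so Hodge index plus adjunction does not give the trichotomy you claim.

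The argument in \cite{CheltsovParkWon} is organized the other way around. One does not try to classify the components of $D$ first and assemble $T$ from them; one fixes from the outset the candidate $T\in|-K_{S_d}|$ determined by $P$ (the member singular at $P$, which in the situations arising here is unique --- this is exactly the $\overline{T}$ invoked later in the proofs of Theorems~\ref{theorem:dp1-2-rank-1} and~\ref{theorem:dp1-rank-2}) and then proves, component by component, that every irreducible piece of $T$ must already lie in $\operatorname{Supp}(D)$. That step uses intersection numbers of $D$ with the putative missing component together with inversion-of-adjunction/multiplicity estimates, and it is what forces the case analysis in the cited reference. This top-down approach sidesteps entirely the need to bound the degrees of the components of $D$, which is where your plan currently breaks.
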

\begin{proof}
See \cite[Theorem~1.12]{CheltsovParkWon} for an effective
 $\mathbb{Q}$-divisor. The proof  works verbatim for  an effective
 $\mathbb{R}$-divisor.
\end{proof}

\subsection{Proof of Theorem~\ref{theorem:main-non-existence}}
\label{section:degree-2}

Before we prove 
Theorem~\ref{theorem:main-non-existence}, we introduce two easy results that we use for the proof.

\begin{lemma}
\label{lemma:mult}
Let $S$ be a smooth surface and let $D$ be an effective $\mathbb{R}$-divisor on
$S$.
If the log pair $(S,D)$ is  not  log canonical at  a point $P$, then
$\mathrm{mult}_{P}(D)> 1$.  \end{lemma}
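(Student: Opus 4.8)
The plan is to reduce to the local picture near $P$ and use that a log canonical pair in dimension two has multiplicity at most $1$ at every point. First I would recall the standard fact that for a smooth surface $S$ and an effective $\mathbb{R}$-divisor $D=\sum a_i C_i$, if $\mathrm{mult}_P(D)\leqslant 1$ then $(S,D)$ is log canonical at $P$; conversely, if $(S,D)$ is not log canonical at $P$ then $\mathrm{mult}_P(D)>1$. The cleanest route is to blow up $P$: let $\pi\colon \widetilde{S}\to S$ be the blow-up at $P$ with exceptional curve $E$, so that $E^2=-1$ and
$$
K_{\widetilde{S}}+\widetilde{D}+\bigl(\mathrm{mult}_P(D)-1\bigr)E=\pi^*\bigl(K_S+D\bigr),
$$
where $\widetilde{D}$ is the strict transform of $D$. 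If $\mathrm{mult}_P(D)\leqslant 1$, then the discrepancy of $E$ is $\geqslant -1$, which is the first consistency check; but to conclude log canonicity at $P$ one has to control \emph{all} divisors over $P$, not just $E$.

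The main step, therefore, is an induction on the number of blow-ups needed to extract a given divisorial valuation centred at $P$. I would argue as follows: pick any prime divisor $F$ over $S$ with centre $P$; it is obtained by a sequence of point blow-ups, the first of which is $\pi$ above. If $\mathrm{mult}_P(D)\leqslant 1$, then $\widetilde{D}+\bigl(\mathrm{mult}_P(D)-1\bigr)E$ is an effective $\mathbb{R}$-divisor whose coefficient along $E$ is $\leqslant 0$, hence in particular the pair $\bigl(\widetilde{S},\widetilde{D}+(\mathrm{mult}_P(D)-1)E\bigr)$ has all coefficients $\leqslant 1$, and one checks that at each point of $E$ the multiplicity of this new boundary is again $\leqslant 1$: indeed $\mathrm{mult}_Q\widetilde{D}\leqslant \widetilde{D}\cdot E=\mathrm{mult}_P(D)$ cannot by itself be bounded by $1$, so here one must be slightly more careful — the right bookkeeping is that $\mathrm{mult}_Q\bigl(\widetilde{D}+(\mathrm{mult}_P(D)-1)E\bigr)\leqslant \mathrm{mult}_Q\widetilde{D}+(\mathrm{mult}_P(D)-1)$, and for $Q$ on $E$ one has $\mathrm{mult}_Q\widetilde{D}\leqslant \mathrm{mult}_P(D)-\mathrm{mult}_P\widetilde{D}_{\text{other branches}}$, so that the total is $\leqslant 1$ provided $\mathrm{mult}_P(D)\leqslant 1$ to begin with — and then the discrepancy of $F$ over $\widetilde{S}$ is $\geqslant -1$ by the inductive hypothesis applied on $\widetilde{S}$. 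Adding the contribution $\mathrm{mult}_P(D)-1\leqslant 0$ of the first blow-up only improves the discrepancy, so $a(F;S,D)\geqslant -1$ for every $F$, i.e. $(S,D)$ is log canonical at $P$. Contrapositively, non-log-canonicity at $P$ forces $\mathrm{mult}_P(D)>1$.

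The hard part is the clean statement and verification of the multiplicity inequality on $E$ after the blow-up — that is, showing $\mathrm{mult}_Q\bigl(\widetilde{D}+(\mathrm{mult}_P(D)-1)E\bigr)\leqslant 1$ for every $Q\in E$ when $\mathrm{mult}_P(D)\leqslant 1$ — since this is exactly the place where the surface hypothesis and the effectivity of $D$ are used, and it is what makes the induction go through. In practice I would either cite this as a standard fact from the minimal model program (for smooth surfaces, $(S,D)$ is log canonical at $P$ $\iff$ $\mathrm{mult}_P(D)\leqslant 1$ holds after every sequence of blow-ups, equivalently $\mathrm{mult}_P(D)\leqslant 1$ on the original surface), or record the short computation above; for the purposes of this lemma only the contrapositive direction is needed, and that direction follows immediately from the single blow-up identity together with the requirement that the log discrepancy of $E$ be $\geqslant 0$.
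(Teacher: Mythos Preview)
The paper does not give a proof of this lemma at all; it simply cites Lazarsfeld, \emph{Positivity in Algebraic Geometry II}, Proposition~9.5.13, where the statement (valid in fact on any smooth variety, not only surfaces) is established. So any correct self-contained argument you supply is already more than what the paper offers.

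Your inductive blow-up strategy is essentially the right one, and the key ingredients are present: the pull-back formula $K_{\widetilde S}+\widetilde D+(m-1)E=\pi^*(K_S+D)$ with $m=\mathrm{mult}_P(D)$, and the intersection bound $\mathrm{mult}_Q(\widetilde D)\leqslant \widetilde D\cdot E=m$ for $Q\in E$. The clean way to run the induction is: since $m\leqslant 1$ the coefficient $m-1$ of $E$ is $\leqslant 0$, so for any divisor $F$ over $Q$ one has $a(F;S,D)=a(F;\widetilde S,\widetilde D+(m-1)E)\geqslant a(F;\widetilde S,\widetilde D)$; now $\widetilde D$ is effective with $\mathrm{mult}_Q(\widetilde D)\leqslant m\leqslant 1$, and the inductive hypothesis (on the number of blow-ups needed to reach $F$) gives $a(F;\widetilde S,\widetilde D)\geqslant -1$. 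Your write-up obscures this: you try to bound the multiplicity of the \emph{non-effective} divisor $\widetilde D+(m-1)E$ directly, which leads to the tangled ``cannot by itself be bounded by $1$'' passage, when in fact the hypothesis $m\leqslant 1$ already gives $\mathrm{mult}_Q(\widetilde D)\leqslant 1$ immediately once you drop the non-positive $E$-term.

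Your final sentence, however, is a genuine error. A single blow-up yields $a(E;S,D)=1-\mathrm{mult}_P(D)$, so the log-canonical condition $a(E;S,D)\geqslant -1$ (equivalently, log discrepancy $\geqslant 0$) only forces $\mathrm{mult}_P(D)\leqslant 2$, not $\leqslant 1$. For example, a nodal curve $D=\{xy=0\}$ has $\mathrm{mult}_0(D)=2$ and $(S,D)$ is log canonical. The single blow-up does \emph{not} prove the lemma; you really do need the full induction controlling all exceptional divisors, and that is precisely what the argument above accomplishes.
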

\begin{proof}
For instance, see \cite[Proposition~9.5.13]{La04II}.
\end{proof}

\begin{lemma}
\label{lemma:dp1-2-3} Let $S$ be a smooth del Pezzo surface of
degree $d\leqslant 3$ and let $$D=\sum_{i=1}^na_iD_i$$ be an effective
$\mathbb{R}$-divisor on $S$ such that $D\equiv -K_{S}$, where $D_1,\ldots,D_n$ are irreducible
curves and $a_1,\ldots,a_n$ are positive real numbers. Then
$a_i\leqslant 1$ for each $i=1,\ldots, n$.
\end{lemma}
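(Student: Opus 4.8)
The plan is to assume that some coefficient $a_1$ exceeds $1$ and to force a contradiction purely by intersecting $D$ with well-chosen curves. First I would record two cheap estimates. Writing $m:=-K_S\cdot D_1$ (a positive integer, since $-K_S$ is ample), the effective $\mathbb{R}$-divisor $D-a_1D_1=\sum_{i\geq 2}a_iD_i$ has non-negative intersection with the ample class $-K_S$, so $a_1m\leq d$; as $a_1>1$ this forces $m<d$, hence $d\geq 2$ and $m\in\{1,2\}$ (with $m=2$ only when $d=3$). In particular the case $d=1$ is finished instantly, since there $1<a_1\leq 1$. Next I would use the Hodge index theorem, $m^2\geq d\,D_1^2$, together with adjunction, $D_1^2=2p_a(D_1)-2+m$, to pin down $D_1$: if $m=1$ then $D_1$ is a $(-1)$-curve, and if $m=2$ (so $d=3$) then $D_1$ is a smooth curve with $D_1^2=0$.

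The key device will be the auxiliary class $N:=-K_S-D_1$. A Riemann--Roch computation, using $h^2(N)=h^0(2K_S+D_1)=0$ because $(2K_S+D_1)\cdot(-K_S)<0$, shows $N$ is effective, and in each surviving case $N\cdot D_1=m-D_1^2=2$. Since $D\equiv D_1+N$, the single term $a_1(D_1\cdot N)=2a_1$ already overshoots $D\cdot N$ (which is at most $d-1\leq 2$), so everything else in $D$ must compensate — this is the common mechanism behind the three remaining cases.

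Concretely: for $d=3$ with $D_1=\ell$ a line, $N=-K_S-\ell$ is the base-point-free conic-bundle fibre class; picking a smooth conic $C_0\in|N|$, so $C_0^2=0$ and $\ell\cdot C_0=2$, and noting that $D_i\cdot C_0\geq 0$ for every component $D_i$ (distinct irreducible curves, or $C_0^2=0$), I would obtain $2=H\cdot C_0=D\cdot C_0=2a_1+\sum_{i\geq 2}a_i(D_i\cdot C_0)\geq 2a_1>2$. For $d=3$ with $D_1$ a $0$-curve, $N$ is a single line $\ell$ with $\ell\cdot D_1=2$; then $D\cdot\ell=1$ forces $\ell$ to be a component of $D$, say $D=a_1D_1+b\ell+R$ with $b>0$ and $D_1,\ell\notin\mathrm{Supp}(R)$, and reading off $D\cdot\ell=1$ gives $b>1$ while $D\cdot D_1=2$ gives $b\leq 1$. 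For $d=2$ with $D_1=\ell$ a line, $N=-K_S-\ell$ is the class of a $(-1)$-curve $\ell'\neq\ell$ (otherwise $2\ell\equiv -K_S$, impossible as $\ell^2<0$) with $\ell\cdot\ell'=2$; it is forced into $D$ as $D=a_1\ell+b\ell'+R$, and then $D\cdot\ell=1$ and $D\cdot\ell'=1$ give $2b\leq 1+a_1$ and $b\geq 2a_1-1$, whence $a_1\leq 1$.

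The hard part is exactly these three sub-cases: away from them the trivial bound $a_1\leq d/m$ already yields $a_1\leq 1$, but when $D_1$ is a $(-1)$-curve or a conic it only gives $a_1\leq 3$, so one genuinely needs the auxiliary curve $N=-K_S-D_1$. The cleanest instance is the cubic surface with $D_1$ a line, where $N$ has self-intersection $0$ and the argument closes at once; the other two cases are mildly more delicate because there $N$ is itself a negative curve and one must track the coefficient with which it is forced to occur in $D$. The only external inputs needed are standard del Pezzo facts: the Hodge index theorem, Riemann--Roch, existence of the conic-bundle pencil $|-K_S-\ell|$ on a cubic surface, and the fact that $-K_S-\ell$ is the class of a $(-1)$-curve on a degree-$2$ del Pezzo surface.
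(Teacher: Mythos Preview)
Your argument is correct in every case. The paper itself proves only the case $d=1$ directly---via exactly the inequality $a_i\leqslant a_i(-K_S\cdot D_i)\leqslant K_S^2=1$ that you also use---and for $d=2,3$ it simply cites \cite[Lemmas~3.1 and~4.1]{CheltsovParkWon}, noting that those proofs carry over verbatim to $\mathbb{R}$-divisors. So your write-up supplies what the paper leaves to the reference: a self-contained treatment of the remaining cases.

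Your route (pin down $D_1$ as a $(-1)$-curve or a conic via the Hodge index theorem and adjunction, then intersect $D$ with a member of $|{-K_S-D_1}|$) is in the same spirit as the cited arguments---bounding coefficients by intersecting with auxiliary curves---but you package it uniformly around the single class $N=-K_S-D_1$, which is a clean organizing device. One cosmetic point: in the cubic-surface line case you write $H\cdot C_0$; make explicit that $H=-K_S$ there, since $H$ is used elsewhere in the paper for an arbitrary ample divisor.
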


\begin{proof}
For the case where $d=1$, the statement follows from
\[1=K_S^2=\sum_{i=1}^na_iD_i\cdot(-K_S)\geqslant a_iD_i\cdot(-K_S)\geqslant a_i.\]
For the cases where $d=2$ and $3$, see 
\cite[Lemmas~3.1 and 4.1]{CheltsovParkWon}, respectively.  Their proofs work verbatim for  an effective
 $\mathbb{R}$-divisor.
\end{proof}

Theorem~\ref{theorem:main-non-existence} immediately follows from the following two statements.

\begin{theorem}
\label{theorem:dp1-2-rank-1} Let $S$ be a smooth del Pezzo surface
of degree $d\leqslant 2$. Let $E$ be a $(-1)$-curve on $S$. For a positive real number $a$ the surface $S$ does
not contain any $(-K_S+aE)$-polar cylinder.
\end{theorem}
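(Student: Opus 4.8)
The plan is to contract the $(-1)$-curve $E$ and reduce the statement to the fact, recorded in Theorem~\ref{theorem:smooth-del-Pezzos-degree-1-2-3}, that no smooth del Pezzo surface of degree at most $3$ admits an anticanonically polar cylinder. Let $\pi\colon S\to S'$ be the contraction of $E$; then $S'$ is a smooth del Pezzo surface of degree $d+1\leqslant 3$. Since $N:=\pi^{*}(-K_{S'})$ is nef with $N\cdot E=0$ and $N\cdot(-K_{S})=(-K_{S'})^{2}+1=d+1>0$, pairing $K_{S}+\lambda H\equiv(1-\lambda)(-K_{S})+\lambda aE$ with $N$ gives $N\cdot(K_{S}+\lambda H)=-(1-\lambda)(d+1)<0$ for $\lambda<1$; hence the Fujita invariant of $H$ is $\mu_{H}=1$, we have $K_{S}+\mu_{H}H\equiv aE$, and $H$ is of type $B(1)$ with Fujita contraction $\pi$.

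Suppose, for a contradiction, that $S$ carries an $H$-polar cylinder $U=S\setminus\operatorname{Supp}(D)$, where $D=\sum_{i}a_{i}C_{i}\equiv -K_{S}+aE$ is effective with all $a_{i}>0$. Then $\pi_{*}D\equiv -K_{S'}$ is an effective anticanonical $\mathbb{R}$-divisor on a del Pezzo surface of degree $\leqslant 3$, so $S'\setminus\operatorname{Supp}(\pi_{*}D)$ is affine and, by Lemma~\ref{lemma:dp1-2-3}, every component of $\pi_{*}D$ — that is, every $C_{i}$ with $C_{i}\neq E$ — has coefficient $a_{i}\leqslant 1$.

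Assume first $E\subset\operatorname{Supp}(D)$. Then $U\cap E=\varnothing$, so $\pi$ restricts to an open immersion of $U$ into the affine surface $S'\setminus\operatorname{Supp}(\pi_{*}D)$ whose complement is finite; since $U$ is itself affine, that complement is empty, so $U\cong S'\setminus\operatorname{Supp}(\pi_{*}D)$ is a $(-K_{S'})$-polar cylinder on a smooth del Pezzo surface of degree $\leqslant 3$, contradicting Theorem~\ref{theorem:smooth-del-Pezzos-degree-1-2-3}. Hence $E\not\subset\operatorname{Supp}(D)$; as $E\cong\mathbb{P}^{1}$ cannot lie in the affine set $U$, $E$ must meet $\operatorname{Supp}(D)$, so $0<D\cdot E=(-K_{S}+aE)\cdot E=1-a$. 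Let $\mathcal{L}$ be the pencil cut out on $S$ by the second projection of $U$, and let $\psi$ be the induced rational map. If $\psi$ were a morphism, then in the notation of diagram~\eqref{diagram} one has $\tilde{\Gamma}=\tilde{C}_{1}$ for some component $C_{1}$ of $D$, and $-2+a_{1}=(K_{S}+D)\cdot F=a(E\cdot F)\geqslant 0$ for a general fibre $F$, so $a_{1}\geqslant 2$; but $C_{1}\neq E$ since $E\not\subset\operatorname{Supp}(D)$, contradicting $a_{1}\leqslant 1$. Hence $\mathcal{L}$ has a single base point $P\in\operatorname{Supp}(D)$, and Theorem~\ref{theorem:obstruction} (using $\mu_{H}=1$) shows that $(S,D)$ is not log canonical at $P$. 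Were $P\in E$, then $1>D\cdot E\geqslant\operatorname{mult}_{P}(D)\operatorname{mult}_{P}(E)=\operatorname{mult}_{P}(D)>1$ by Lemma~\ref{lemma:mult}, which is absurd; so $P\notin E$.

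There remains the configuration $E\not\subset\operatorname{Supp}(D)$ with $P\notin E$, and I expect this to be the main obstacle. Here $\pi$ is a local isomorphism near $P$, so $\pi_{*}D\equiv -K_{S'}$ is not log canonical at $\pi(P)$, and Theorem~\ref{theorem:del-Pezzo-tigers} furnishes $T\in|-K_{S'}|$, not log canonical at $\pi(P)$, with $\operatorname{Supp}(T)\subset\operatorname{Supp}(\pi_{*}D)$. The trouble is that $\pi$ collapses the non-empty rational arc $E\cap U$ of the cylinder to the point $\pi(E)\in\operatorname{Supp}(\pi_{*}D)$, so the cylinder does not visibly descend through $\pi$. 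The plan is to exploit the $\mathbb{P}^{1}$-fibration $\phi$ on the resolution of $\mathcal{L}$ in diagram~\eqref{diagram} — together with the facts that a general fibre meets the boundary in a single point (on the section $\tilde{\Gamma}$), that $a_{i}<2$ for every $C_{i}\neq E$, and the local data at $\pi(P)$ provided by the tiger $T$ — to force the strict transform of $E$ to be a fibre component of $\phi$, i.e. $E\cap U$ to be a fibre of the $\mathbb{A}^{1}$-fibration of $U$. Once this is proved, $U\setminus E$ is again a cylinder carried isomorphically by $\pi$ onto $S'\setminus\operatorname{Supp}(\pi_{*}D)$, and Theorem~\ref{theorem:smooth-del-Pezzos-degree-1-2-3} gives the same contradiction. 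Everything else — the value of $\mu_{H}$, the coefficient bounds, and the descent to smaller degree — is routine given the results above.
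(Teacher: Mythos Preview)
Your setup matches the paper's exactly through the point where you establish $E\not\subset\operatorname{Supp}(D)$, $P\notin E$, and the existence of a tiger $T'\in|-K_{S'}|$ with $\operatorname{Supp}(T')\subset\operatorname{Supp}(\pi_*D)$ that is not log canonical at $\pi(P)$. But from there your argument is unfinished: you propose to show that $E\cap U$ is a fibre of the $\mathbb{A}^1$-fibration, and you yourself flag this as the main obstacle without proving it. There is no evident mechanism for this. Since $P\notin E$, the strict transform of $E$ on the resolution $\tilde{S}$ is again a $(-1)$-curve disjoint from every exceptional curve over $P$, and nothing in the coefficient bounds $a_i\leqslant 1$ or in the local picture at $P$ constrains how $E$ sits relative to the fibration $\phi$; the data you list (general fibres meeting the boundary in a single point, $a_i<2$, and the tiger at $\pi(P)$) all live away from $E$.

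The paper closes the gap by a completely different device: an interpolation between $D$ and the proper transform $T$ of $T'$ on $S$. Set $D_\mu=(1+\mu)D-\mu T$ and $\nu=\sup\{\mu\geqslant 0:D_\mu\ \text{is effective}\}$. Since $T$ has at most three components while $D$ has at least $\mathrm{rank}\,\mathrm{Cl}(S)\geqslant 8$ by~\eqref{equation:KPZ-r}, one has $D\neq T$, $\nu>0$, and $\operatorname{Supp}(T)\not\subset\operatorname{Supp}(D_\nu)$. Writing $D_\mu\equiv -K_S+a_\mu E$ with $a_\mu$ affine in $\mu$ and $a_0=a>0$, there are two cases. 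If $a_\nu\geqslant 0$, then $K_S+D_\nu$ is pseudo-effective with $\operatorname{Supp}(D_\nu)\subset\operatorname{Supp}(D)$, so Theorem~\ref{theorem:obstruction} makes $(S,D_\nu)$---hence $(S',\pi_*D_\nu)$---non--log canonical at $P$ and $\pi(P)$; but $\operatorname{Supp}(T')\not\subset\operatorname{Supp}(\pi_*D_\nu)$, contradicting Theorem~\ref{theorem:del-Pezzo-tigers}. If $a_\nu<0$, pick $\lambda\in(0,\nu)$ with $a_\lambda=0$; then $\operatorname{Supp}(D_\lambda)=\operatorname{Supp}(D)$, so $S\setminus\operatorname{Supp}(D_\lambda)=U$ is a $(-K_S)$-polar cylinder, contradicting Theorem~\ref{theorem:smooth-del-Pezzos-degree-1-2-3}. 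This interpolation, not any verticality of $E$, is the missing idea.
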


\begin{proof}
Suppose that there exists an effective $\mathbb{R}$-divisor $D$
such that $D\equiv -K_S+aE$ and  
$S\setminus~\mathrm{Supp}(D)$ is isomorphic to $\mathbb{A}^1\times Z$ for some
affine variety $Z$.

Let $f\colon S\to\overline{S}$ be the contraction of the curve $E$. Put
$\overline{D}=f(D)$. Then $\overline{S}$ is a smooth del Pezzo surface of
degree $d+1\leqslant 3$. Moreover, we have
$\overline{D}\equiv -K_{\overline{S}}$. 

If
$E\subset\mathrm{Supp}(D)$, then
$$
\overline{S}\setminus\mathrm{Supp}(\overline{D})\cong S\setminus
\mathrm{Supp}(D)\cong \mathbb{A}^1\times Z,
$$
 which implies that
$\overline{S}\setminus\mathrm{Supp}(\overline{D})$ is a
$(-K_{\overline{S}})$-polar cylinder on $\overline{S}$. This contradicts
Theorem~\ref{theorem:smooth-del-Pezzos-degree-1-2-3}.  Therefore, 
$E\not\subset\mathrm{Supp}(D)$. 
In particular, $D\cdot E\geqslant 0$ and $a\leqslant 1$.

Put $D=\sum_{i=1}^{n}a_iD_i$, where $D_1,\ldots,D_n$ are
irreducible curves and $a_1,\ldots,a_n$ are positive real
numbers.
None of the  curves $D_1,\ldots,D_n$ are contracted by the
morphism $f$ and
$$
\sum_{i=1}^{n}a_if(D_i)=\overline{D}\equiv -K_{\overline{S}}.
$$
Therefore, we have $a_i\leqslant 1$ for each $i=1,\ldots, n$ by  Lemma~\ref{lemma:dp1-2-3}. 
Since $a\leqslant 1$ too, by the second case in  Remark~\ref{observation} the linear system $\mathcal {L}$ associated with the cylinder $S\setminus\mathrm{Supp}(D)$ has a base point, say $P$. Due to 
Theorem~\ref{theorem:obstruction} for
every effective $\mathbb{R}$-divisor $B$ on  $S$ such
that $K_{S}+B$ is pseudo-effective and
$\mathrm{Supp}(B)\subset\mathrm{Supp}(D)$, the log pair $(S, B)$ is not log canonical at $P$. In particular,  $(S,D)$ is not log canonical at the point $P$.

The inequality 
$$
1>1-a=\left(-K_{S}+aE\right)\cdot E=D\cdot E\geqslant\mathrm{mult}_{P}(D)\mathrm{mult}_{P}(E)%
$$
 and Lemma~\ref{lemma:mult} show that $P$ lies outside  $E$. Therefore, $(\overline{S},\overline{D})$ is not log canonical at $f(P)$.

Let $\overline{T}$ be the unique divisor in $|-K_{\overline{S}}|$ that is
singular at $f(P)$. Denote by $T$ its proper transform on the
surface $S$. Since $\overline{D}\equiv -K_{\overline{S}}$ and
$(\overline{S},\overline{D})$ is not log canonical at the point $f(P)$, it
follows from Theorem~\ref{theorem:del-Pezzo-tigers} that
$(\overline{S},\overline{T})$ is not log canonical at $f(P)$ and  $\mathrm{Supp}(\overline{T})\subset\mathrm{Supp}(\overline{D})$. 
Hence,  $\mathrm{Supp}(T)\subset\mathrm{Supp}(D)$.

For every non-negative real number $\mu$, put
$D_{\mu}=(1+\mu)D-\mu T$ and $\overline{D}_{\mu}=(1+\mu)\overline{D}-\mu
\overline{T}$. Since $-K_{\overline{S}}\cdot\overline{T}=K_{\overline{S}}^2\leqslant 3$, the divisor $T$ consists of at most
$3$ irreducible
components.  Therefore, $D\ne T$ because the divisor $D$ has at least 
$8$ components 
by \eqref{equation:KPZ-r}.  Put
$$
\nu=\mathrm{sup}\Big\{\mu\in\mathbb{R}_{\geqslant 0}\ \Big\vert\ D_{\mu}\ \text{is effective}\Big\}.%
$$
Then  $\mathrm{Supp}(T)\not\subset\mathrm{Supp}(D_\nu)$ and $\mathrm{Supp}(\overline{T})\not\subset\mathrm{Supp}(\overline{D}_\nu)$. In particular, we have
$\nu>0$ since $\mathrm{Supp}(T)\subset\mathrm{Supp}(D)$. 

We have $\overline{D}_{\mu}\equiv\overline{D}\equiv
\overline{T}\equiv -K_{\overline{S}}$ for each  real number $\mu$. This implies that
$$
D_{\mu}\equiv -K_{S}+a_\mu E
$$
for some real number $a_\mu$. Note that $a_\mu$ is either linear
or constant in $\mu$. 

Suppose that $a_\nu\geqslant 0$. Then $K_{S}+D_\nu$ is
pseudo-effective. Therefore, the log pair $(S,D_\nu)$ is not log
canonical at the point $P$ by Theorem~\ref{theorem:obstruction}.
Then $(\overline{S},\overline{D}_\nu)$ is not log canonical at $f(P)$. The
latter contradicts Theorem~\ref{theorem:del-Pezzo-tigers} because
 $\mathrm{Supp}(\overline{T})\not\subset\mathrm{Supp}(\overline{D}_\nu)$ by the choice of $\nu$.

Suppose that $a_\nu<0$. Since 
$a_0=a>0$, there exists a positive real number
$\lambda\in(0,\nu)$ such that $a_\lambda=0$. It follows from $\lambda<\nu$ that
$\mathrm{Supp}(T)\subset\mathrm{Supp}(D_\lambda)$ and
$\mathrm{Supp}(D_\lambda)=\mathrm{Supp}(D)$. 
Therefore,
$$
S\setminus\mathrm{Supp}(D_\lambda)\cong S\setminus
\mathrm{Supp}(D)\cong \mathbb{A}^1\times Z
$$
is a  cylinder.
However, this contradicts Theorem~\ref{theorem:smooth-del-Pezzos-degree-1-2-3} because  $a_\lambda=0$, i.e.,
 $D_\lambda\equiv-K_{S}$.
\end{proof}

\begin{theorem}
\label{theorem:dp1-rank-2} Let $S$ be a smooth del Pezzo surface
of degree $1$. Let $E$ and $F$ be two disjoint  $(-1)$-curves on $S$. The surface
$S$ contains no $(-K_{S}+aE+bF)$-polar cylinder for any positive real numbers $a$ and $b$.
\end{theorem}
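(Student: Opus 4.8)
The strategy is to mimic the proof of Theorem~\ref{theorem:dp1-2-rank-1} as closely as possible, using a double contraction instead of a single one. Suppose $D$ is an effective $\mathbb{R}$-divisor with $D\equiv -K_S+aE+bF$ whose complement is a cylinder. First I would contract $E$ and $F$ in turn: let $f\colon S\to\overline{S}$ be the contraction of $E$ and $F$ (they are disjoint $(-1)$-curves, so this is a morphism), so that $\overline{S}$ is a smooth del Pezzo surface of degree $3$. As in the rank-$1$ case, one checks that neither $E$ nor $F$ can lie in $\mathrm{Supp}(D)$: if, say, $E\subset\mathrm{Supp}(D)$, then contracting $E$ first produces a $(-K_{\overline{S}'}+b\overline{F})$-polar cylinder on a degree-$2$ del Pezzo surface $\overline{S}'$, contradicting Theorem~\ref{theorem:dp1-2-rank-1}; symmetrically for $F$. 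Hence $E,F\not\subset\mathrm{Supp}(D)$, so $D\cdot E\geqslant 0$ and $D\cdot F\geqslant 0$, which forces $a\leqslant 1$ and $b\leqslant 1$. Writing $D=\sum a_iD_i$ and noting that $\overline{D}:=f(D)\equiv -K_{\overline{S}}$, Lemma~\ref{lemma:dp1-2-3} gives $a_i\leqslant 1$ for all $i$.

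Next, since all the coefficients of $D$ (and $a,b$) are $\leqslant 1$, Remark~\ref{observation} forces the pencil $\mathcal{L}$ associated to the cylinder to have a base point $P$, and Theorem~\ref{theorem:obstruction} says that for every effective $\mathbb{R}$-divisor $B$ with $\mathrm{Supp}(B)\subset\mathrm{Supp}(D)$ and $K_S+B$ pseudo-effective, $(S,B)$ is not log canonical at $P$; in particular $(S,D)$ is not log canonical at $P$. A multiplicity estimate of the form
$$
1\geqslant 1-a = (-K_S+aE+bF)\cdot E = D\cdot E \geqslant \mathrm{mult}_P(D)\,\mathrm{mult}_P(E)
$$
together with Lemma~\ref{lemma:mult} shows $P\notin E$, and symmetrically $P\notin F$. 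Hence $P$ maps isomorphically to a point $f(P)$ on $\overline{S}$ and $(\overline{S},\overline{D})$ is not log canonical at $f(P)$. By Theorem~\ref{theorem:del-Pezzo-tigers} there is an anticanonical divisor $\overline{T}\in|-K_{\overline{S}}|$ with $(\overline{S},\overline{T})$ not log canonical at $f(P)$ and $\mathrm{Supp}(\overline{T})\subset\mathrm{Supp}(\overline{D})$; its proper transform $T$ on $S$ satisfies $\mathrm{Supp}(T)\subset\mathrm{Supp}(D)$, and since $-K_{\overline{S}}\cdot\overline{T}=3$ the divisor $T$ has at most $3$ components while $D$ has at least $\mathrm{rank}\,\mathrm{Cl}(S)=9$ components by \eqref{equation:KPZ-r}, so $D\neq T$.

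Now I run the same deformation argument: for $\mu\geqslant 0$ set $D_\mu=(1+\mu)D-\mu T$ and $\overline{D}_\mu=(1+\mu)\overline{D}-\mu\overline{T}$, and let $\nu=\sup\{\mu\geqslant 0 : D_\mu\text{ effective}\}$, which is positive since $\mathrm{Supp}(T)\subset\mathrm{Supp}(D)$ and finite since $D\neq T$; at $\mu=\nu$ we have $\mathrm{Supp}(\overline{T})\not\subset\mathrm{Supp}(\overline{D}_\nu)$. Since $\overline{D}_\mu\equiv -K_{\overline{S}}$ for all $\mu$, we get $D_\mu\equiv -K_S+a_\mu E+b_\mu F$ with $a_\mu,b_\mu$ affine-linear in $\mu$. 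If both $a_\nu\geqslant 0$ and $b_\nu\geqslant 0$, then $K_S+D_\nu$ is pseudo-effective, so $(S,D_\nu)$ — hence $(\overline{S},\overline{D}_\nu)$ — is not log canonical at $P$, contradicting Theorem~\ref{theorem:del-Pezzo-tigers}. Otherwise some $a_\mu$ or $b_\mu$ becomes negative before $\mu=\nu$; taking the smallest $\lambda\in(0,\nu)$ at which $\min(a_\lambda,b_\lambda)=0$, say $a_\lambda=0$, the divisor $D_\lambda\equiv -K_S+b_\lambda F$ still has $\mathrm{Supp}(D_\lambda)=\mathrm{Supp}(D)$ (because $\lambda<\nu$), so $S\setminus\mathrm{Supp}(D_\lambda)$ is a $(-K_S+b_\lambda F)$-polar cylinder with $b_\lambda\geqslant 0$ — contradicting Theorem~\ref{theorem:dp1-2-rank-1} if $b_\lambda>0$ and Theorem~\ref{theorem:smooth-del-Pezzos-degree-1-2-3} if $b_\lambda=0$. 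Either way we reach a contradiction.

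The step I expect to be the main obstacle is controlling the second coefficient during the deformation: in the rank-$1$ proof there is a single parameter $a_\mu$ and one only has to track when it changes sign, whereas here $a_\mu$ and $b_\mu$ vary simultaneously and one must argue carefully that when the \emph{first} of them hits zero the support has not yet shrunk (i.e. $\lambda<\nu$) and that the other coefficient is still $\geqslant 0$ at that moment, so that Theorem~\ref{theorem:dp1-2-rank-1} genuinely applies. Handling the symmetric roles of $E$ and $F$ and the possibility that $a_\mu$ or $b_\mu$ is constant rather than strictly linear requires some care but is routine.
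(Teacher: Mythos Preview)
Your proposal is correct and follows essentially the same approach as the paper's own proof: contract $E$ and $F$ to a smooth cubic surface, use Theorem~\ref{theorem:dp1-2-rank-1} to rule out $E,F\subset\mathrm{Supp}(D)$, locate the base point $P$ away from $E\cup F$ via the multiplicity bound, invoke Theorem~\ref{theorem:del-Pezzo-tigers} to find the anticanonical tiger $\overline{T}$, and then run the linear deformation $D_\mu=(1+\mu)D-\mu T$ tracking the two affine-linear coefficients $a_\mu,b_\mu$. The endgame you anticipate as the ``main obstacle'' --- taking the first $\lambda\in(0,\nu)$ at which $\min(a_\lambda,b_\lambda)=0$ and applying Theorem~\ref{theorem:dp1-2-rank-1} or Theorem~\ref{theorem:smooth-del-Pezzos-degree-1-2-3} --- is exactly what the paper does, and your observation that linearity guarantees the other coefficient is still nonnegative at that moment is the only point requiring care.
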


\begin{proof}
Suppose that there exists an effective $\mathbb{R}$-divisor $D$
such that $D\equiv -K_{S}+aE+bF$ and such that $S\setminus
\mathrm{Supp}(D)$ is isomorphic to $\mathbb{A}^1\times Z$ for some
affine variety $Z$. In the following we  seek for a contradiction.

Let $g\colon S\to\hat{S}$ be the contraction of the curve $E$. Put
$\hat{D}=g(D)$ and $\hat{F}=g(F)$. Then $\hat{S}$
is a smooth del Pezzo surface of degree $2$, 
 $\hat{F}$ is a $(-1)$-curve, and
$\hat{D}\equiv -K_{\hat{S}}+b\hat{F}$. This implies that
$E\not\subset\mathrm{Supp}(D)$. Indeed, if
$E\subset\mathrm{Supp}(D)$, then
$$
\hat{S}\setminus\mathrm{Supp}(\hat{D})\cong S\setminus
\mathrm{Supp}(D)\cong \mathbb{A}^1\times Z
$$
is a $\hat{D}$-polar cylinder on $\hat{S}$. This is impossible
by Theorem~\ref{theorem:dp1-2-rank-1}.
Similarly, we see that
$F\not\subset\mathrm{Supp}(D)$. Therefore, $D\cdot E\geqslant 0$, $D\cdot F\geqslant 0$ and $a\leqslant 1$, $b\leqslant 1$.

Write $D=\sum_{i=1}^{n}a_iD_i$, where $D_1,\ldots,D_n$ are
irreducible curves and $a_1,\ldots,a_n$ are positive real
numbers. 

Let $f\colon S\to\overline{S}$ be the contraction of the curves $E$ and
$F$. Put $\overline{D}=f(D)$. Then $\overline{S}$ is a smooth cubic surface
and $\overline{D}\equiv -K_{\overline{S}}$. None of the  curves $D_1,\ldots,D_n$ are contracted by the
morphism~$f$ and
$$
\sum_{i=1}^{n}a_if(D_i)=\overline{D}\equiv -K_{\overline{S}}.
$$
Therefore, we have $a_i\leqslant 1$ for each $i=1,\ldots, n$ by  Lemma~\ref{lemma:dp1-2-3}.  Because $a, b\leqslant 1$, the second case in Remark~\ref{observation} implies that the linear system $\mathcal {L}$ associated with the cylinder $S\setminus\mathrm{Supp}(D)$ has a base point, say $P$.
By
Theorem~\ref{theorem:obstruction} for
every effective $\mathbb{R}$-divisor $B$ on $S$ such
that $K_{S}+B$ is pseudo-effective and
$\mathrm{Supp}(B)\subset\mathrm{Supp}(D)$, the log pair $(S, B)$ is not log canonical at $P$ . In particular,  $(S,D)$ is not log canonical at the point $P$.

We claim that $P$ belongs to neither $E$ nor $F$. Indeed, if $P\in E$, then
$$
1> 1-a=\left(-K_{S}+aE\right)\cdot E=D\cdot E\geqslant\mathrm{mult}_{P}(D)>1%
$$
by Lemma~\ref{lemma:mult}. This shows that $P\not\in E$.
Similarly, we see that $P\not\in F$. 
Therefore, the birational morphism $f$ is an isomorphism in a
neighborhood of the point $P$. In particular, the log pair
$(\overline{S},\overline{D})$ is not log canonical at $f(P)$.

Let $\overline{T}$ be the unique divisor in $|-K_{\overline{S}}|$ that is
singular at $f(P)$. Denote by $T$ its proper transform on the
surface $S$. Since $\overline{D}\equiv -K_{\overline{S}}$ and
$(\overline{S},\overline{D})$ is not log canonical at the point $f(P)$, it
follows from Theorem~\ref{theorem:del-Pezzo-tigers} that
$(\overline{S},\overline{T})$ is not log canonical at $f(P)$ and  $\mathrm{Supp}(\overline{T})\subset\mathrm{Supp}(\overline{D})$. 
Hence,  $\mathrm{Supp}(T)\subset\mathrm{Supp}(D)$.

For every non-negative real number $\mu$, put
$D_{\mu}=(1+\mu)D-\mu T$ and $\overline{D}_{\mu}=(1+\mu)\overline{D}-\mu
\overline{T}$. Since $-K_{\overline{S}}\cdot\overline{T}=K_{\overline{S}}^2=3$, the divisor $T$ consists of at most
$3$ irreducible
components.  Therefore, $D\ne T$ because the divisor $D$ has at least 
$9$ components 
by \eqref{equation:KPZ-r}.  Put
$$
\nu=\mathrm{sup}\Big\{\mu\in\mathbb{R}_{\geqslant 0}\ \Big\vert\ D_{\mu}\ \text{is effective}\Big\}.%
$$
Then  $\mathrm{Supp}(T)\not\subset\mathrm{Supp}(D_\nu)$ and $\mathrm{Supp}(\overline{T})\not\subset\mathrm{Supp}(\overline{D}_\nu)$. In particular, we have
$\nu>0$ since $\mathrm{Supp}(T)\subset\mathrm{Supp}(D)$. 

We have $\overline{D}_{\mu}\equiv\overline{D}\equiv
\overline{T}\equiv -K_{\overline{S}}$ for each  real number $\mu$. This implies that
$$
D_{\mu}\equiv -K_{S}+a_\mu E+b_\mu F
$$
for some real numbers $a_\mu$ and $b_\mu$. From 
$-K_S+E+F\equiv f^*(\overline{D}_\mu)=(1+\mu)f^*(\overline{D})-\mu f^*(\overline{T})$  and $a_0=a$, $b_0=b$ we obtain
$$
\left\{%
\aligned
&a_\mu=\left(\mult_{f(E)}(\overline{T})-\mult_{f(E)}(\overline{D})\right)\mu+a\\
&b_\mu=\left(\mult_{f(F)}(\overline{T})-\mult_{f(F)}(\overline{D})\right)\mu+b.\\
\endaligned\right.%
$$

Suppose that $a_\nu\geqslant 0$ and $b_\nu\geqslant 0$. Then
$K_{S}+D_\nu$ is pseudo-effective, and hence the log pair $(S,D_\nu)$ is not
log canonical at the point $P$ by Theorem~\ref{theorem:obstruction}.
Then $(\overline{S},\overline{D}_\nu)$ is not log canonical at $f(P)$.  Since 
$\mathrm{Supp}(\overline{T})\not\subset\mathrm{Supp}(\overline{D}_\nu)$, this contradicts Theorem~\ref{theorem:del-Pezzo-tigers}. 

Suppose that either $a_\nu< 0$ or $b_\nu< 0$. Since $a_0=a>0$ and $b_0=b>0$, there is a real number $\lambda \in (0, \nu)$
such that either $a_\lambda=0$, $b_\lambda\geqslant 0$ or $a_\lambda\geqslant 0$, $b_\lambda=0$. Without loss of generality we may assume that $a_\lambda=0$. Since $\lambda <\nu$,  $\mathrm{Supp}(T)\subset\mathrm{Supp}(D_\lambda)=\mathrm{Supp}(D)$. Therefore, $S\setminus \mathrm{Supp}(D_\lambda)=S\setminus \mathrm{Supp}(D)$ is a cylinder. However,
this contradicts either Theorem~\ref{theorem:smooth-del-Pezzos-degree-1-2-3} or Theorem~\ref{theorem:dp1-2-rank-1} since
$$
D_{\lambda}\equiv -K_{S}+b_\lambda F.
$$
\end{proof}

\section{Cylinders in del Pezzo surfaces of small degrees}

\subsection{Del Pezzo surface of degree 2 without a cuspidal anticanonical divisor}
A smooth quartic plane curve can have at most twenty four inflection points. There may be two kinds of inflection points on a smooth quartic curve. One is a point at which its tangent line intersects the quartic with multiplicity $3$, and the other with multiplicity $4$. The former is called an ordinary inflection point and the latter a hyperinflection point. 
These inflection points can be spotted with the Hessian curve of the given quartic curve. The Hessian curve intersects the quartic curve transversally at ordinary inflection points and meets the quartic curve at hyperinflection points with multiplicity $2$. Since the degree of the Hessian curve is $6$, we have
\[\mbox{the number of ordinary inflection points } + 2\times \mbox{ the number of hyperinflection points } =24.\]
Therefore, a smooth quartic plane curve has exactly twelve hyperinflection points if it contains no  ordinary inflection point.

A  smooth del Pezzo surface of degree $2$ is a double cover of $\mathbb{P}^2$ ramified along a smooth plane quartic curve.
An effective anticanonical divisor on a smooth del Pezzo surface of degree $2$ is given by the pull-back of  a line on $\mathbb{P}^2$ via the double covering map. 
An effective anticanonical divisor that is a cuspidal rational curve is given exactly by  the pull-back of the tangent line at an ordinary inflection point. The pull-back of the tangent line at a hyperinflection point is an effective anticanonical divisor that is a tacnodal curve, i.e., two $(-1)$-curves intersecting at a single point tangentially.
Consequently, a smooth del Pezzo surface of degree $2$ contains twelve effective anticanonical divisors that are tacnodal curves if its anticanonical linear system contains no cuspidal rational curve. Each of the twelve tacnodal curves consists of two distinct  $(-1)$-curves intersecting at a single point tangentially.  These twenty four $(-1)$-curves are distinct.

In fact, there are exactly two quartic plane curves without any ordinary inflection point ( \cite{E45}, \cite{KuKo79}). One is the Fermat quartic, i.e., the curve defined by
\[x^4+y^4+z^4=0,\]
 and the other is the curve defined by
\[
x^4+y^4+z^4+3(x^2y^2+y^2z^2+z^2x^2)=0.\]
As explained above, these have exactly twelve  hyperinflection points.
The del Pezzo surfaces of degree~$2$ corresponding these two quartic curves are the only del Pezzo surfaces of degree $2$ whose anticanonical linear systems contain no cuspidal rational curves. 

\subsection{Cylinders in del Pezzo surfaces of degree 2}
\label{section:cylinders-degree-2}

In order to prove Theorem~\ref{theorem:main-hard}, let
$S$ be a smooth del Pezzo surface of degree $2$ and let $H$ be an ample
$\mathbb{R}$-divisor on~$S$. Let $\mu$ and $r$ be the Fujita invariant and the Fujita rank of $H$.
Denote by $\Delta$ the Fujita face of $H$. Let $\phi: S\to Z$ be the contraction given by  $\Delta$.

We first consider ample $\mathbb{R}$-divisors of type $B(r)$. Let $E_1,\ldots, E_r$ be the $r$ disjoint $(-1)$-curves that generate the face $\Delta$.
We may then write 
\begin{equation}\label{equation:coefficient}
K_{S}+\mu H\equiv \sum^{r}_{i=1}a_iE_i
\end{equation}
for some positive real numbers $a_1, \ldots, a_r$ (see
\eqref{remark:curves-in-face-birational}). 
\begin{theorem}\label{theorem:BR3-7}
If the ample $\mathbb{R}$-divisor $H$ is of type $B(r)$ with $3\leqslant r\leqslant 7$, then $S$ contains an $H$-polar cylinder.
\end{theorem}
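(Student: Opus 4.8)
The plan is to imitate Lemma~\ref{lemma:deg-3-cylinders-birational-1}: contract the Fujita face, descend to a del Pezzo surface of larger degree, and there glue on one of the $(-K)$-polar cylinders produced in Subsection~\ref{section:cylinders}, while bookkeeping coefficients. Let $\phi\colon S\to Z$ be the contraction of the $r$ disjoint $(-1)$-curves $E_1,\dots,E_r$ spanning $\Delta$; then $Z$ is a smooth del Pezzo surface of degree $2+r$, with $5\leqslant 2+r\leqslant 9$, and the points $P_i:=\phi(E_i)$ are in general position on $Z$. The key formal reduction I would record is: it suffices to exhibit, for every $\varepsilon>0$, an effective $\mathbb{R}$-divisor $D_Z\equiv -K_Z$ such that $Z\setminus\mathrm{Supp}(D_Z)$ is a cylinder and $\mathrm{mult}_{P_i}(D_Z)>1-\varepsilon$ for all $i$. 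Indeed, writing $\phi^{*}D_Z=\widetilde{D}_Z+\sum_i m_iE_i$ with $m_i=\mathrm{mult}_{P_i}(D_Z)$, and using $-K_S=\phi^{*}(-K_Z)-\sum_iE_i$ together with \eqref{equation:coefficient}, one gets
\[
\mu H\equiv\widetilde{D}_Z+\sum_{i=1}^{r}\bigl(m_i-1+a_i\bigr)E_i ,
\]
all of whose coefficients become positive once $\varepsilon<\min_i a_i$; since $S\setminus\bigl(\mathrm{Supp}(\widetilde{D}_Z)\cup E_1\cup\dots\cup E_r\bigr)\cong Z\setminus\mathrm{Supp}(D_Z)$, dividing by $\mu$ produces the required $H$-polar cylinder.

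Thus everything is reduced to a statement about $Z$: through any $r=\deg Z-2$ general points there should be a $(-K_Z)$-polar cylinder whose boundary meets those points on a component of coefficient arbitrarily close to $1$. I would take that component to be an irreducible cuspidal rational curve $C\in|-K_Z|$ through all $r$ points $P_i$, completed to the boundary of a $(-K_Z)$-polar cylinder exactly as in the relevant model case: Example~\ref{example:cusp} when $Z\cong\mathbb{P}^2$ (the cuspidal cubic together with its cuspidal tangent line), Example~\ref{example:non-Prokhorov example7-1} in degree $7$, Example~\ref{example:non-Prokhorov example8-2} (or \ref{example:non-Prokhorov example8-3}) when $Z\cong\mathbb{F}_1$, Example~\ref{example:non-Prokhorov example8-1} (or \ref{example:non-Prokhorov example8-0}) when $Z\cong\mathbb{P}^1\times\mathbb{P}^1$, and Examples~\ref{example:non-Prokhorov example5-3}, \ref{example:non-Prokhorov example5-4}, \ref{example:non-Prokhorov example5-2}, \ref{example:non-Prokhorov example6-1} in degrees $5$ and $6$. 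In each of these the \emph{main} curve $C$ carries coefficient $1-k\varepsilon$ for a suitable integer $k$ and the auxiliary components carry coefficient $\varepsilon$, so $\mathrm{mult}_{P_i}(D_Z)>1-\varepsilon$ holds for $\varepsilon$ small, and the auxiliary components need not meet the $P_i$ since those already lie on $C$. That a cuspidal $C\in|-K_Z|$ through $r$ general points exists is a dimension count: the cuspidal members of $|-K_Z|$ form a $(\deg Z-2)$-dimensional family, and passing through $r=\deg Z-2$ general points imposes exactly that many conditions.

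The one place where this genuinely breaks down is precisely the existence of such a cuspidal curve: the proper transform on $S$ of $C$ is a cuspidal rational curve in $|-K_S|$, and, as recalled in the preceding subsection, exactly two smooth del Pezzo surfaces of degree $2$ — the double covers of the Fermat quartic and of $x^4+y^4+z^4+3(x^2y^2+y^2z^2+z^2x^2)=0$ — contain no such curve. For these two surfaces the role of $C$ must instead be played by a tacnodal anticanonical divisor (a pair of $(-1)$-curves meeting tangentially), of which there are twelve; equivalently, on a model $\mathbb{P}^2$, by a cuspidal cubic whose cusp lies at a blow-up centre, fed into the construction after an auxiliary blow-up and contraction at the tacnode as in Example~\ref{example:cusp}. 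Verifying that a tacnodal member can always be chosen compatibly with the prescribed Fujita face $\Delta$ — so that the numerical bookkeeping above still yields non-negative coefficients — together with a separate treatment of the small values $r=3,4$ for those two surfaces, is the delicate case-analysis I expect to be the heart of the proof; the remaining, generic surfaces should then follow smoothly from the cuspidal-curve construction above.
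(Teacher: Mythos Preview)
Your reduction and the cuspidal case are exactly the paper's approach: the paper's Case~2 fixes a cuspidal $C\in|-K_S|$, contracts $E_1,\dots,E_r$, and then for each $r$ invokes precisely the Examples you list (Examples~\ref{example:non-Prokhorov example5-4}, \ref{example:non-Prokhorov example6-1}, \ref{example:non-Prokhorov example7-1}, \ref{example:non-Prokhorov example8-2}, \ref{example:cusp}, etc.) to supply the auxiliary curves through the cusp, disjoint from the $E_i$'s. Your bookkeeping formula $\mu H\equiv\widetilde{D}_Z+\sum(m_i-1+a_i)E_i$ is exactly what the paper computes case-by-case. One small caveat: your ``dimension count'' for the existence of a cuspidal $C\in|-K_Z|$ through the $P_i$ is a red herring --- the $P_i$ are not generic, and what you are really using (as you then observe) is simply that $|-K_S|$ contains a cuspidal member.

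The tacnodal (non-cuspidal) case, which you leave as a sketch, is handled differently from what you suggest. The paper does not pass to a cuspidal cubic on $\mathbb{P}^2$ with cusp at a blow-up centre; instead it works directly with a tacnodal divisor $C_1+C_1'\in|-K_S|$ (two $(-1)$-curves tangent at a point), chosen among the twelve so that none of the $E_i$ coincide with $C_1$ or $C_1'$, and then splits into subcases according to the pair $(m,m')$ recording how many of the $E_i$ meet $C_1$ versus $C_1'$. For each value of $(m,m')$ the image $\phi(C_1)+\phi(C_1')$ on $Z$ has prescribed self-intersections, and the paper matches this to one of Examples~\ref{example:non-Prokhorov example8-0}, \ref{example:non-Prokhorov example8-1}, \ref{example:non-Prokhorov example8-3}, or the line-plus-conic model, choosing signs so that the $E_i$ on the ``heavy'' component acquire negative $\varepsilon$-corrections and those on the ``light'' one positive ones. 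This case analysis covers all $3\leqslant r\leqslant 7$, not just small $r$; there is no separate treatment of $r=3,4$.
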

\begin{proof}
The proof is divided into two cases. One is the case when $S$ has a cuspidal rational curve in $|-K_S|$, and the other is the case when it does not.

\bigskip

\textbf{Case 1.} The surface $S$ has no cuspidal rational curve in $|-K_S|$. 
\medskip

In this case, as mentioned in the previous subsection, $S$  has exactly  twelve pairs of $(-1)$-curves $\{C_i, C'_i\}$, $i=1,\ldots, 12$ such that each $C_i+C'_i$ is a tacnodal anticanonical divisor.

Choose one tacnodal anticanonical divisor, say $C_1+C_1'$. Since we have more than seven tacnodal anticanonical divisors, we may assume that 
\begin{itemize}
\item none of $E_i$ are $C_1$ or $C_1'$;
\item if $r=6$, then neither $\phi(C_1)$ nor $\phi(C_1')$ is a $(-1)$-curve.
\end{itemize}  Let $m$ be the number of the curves $E_i$'s intersecting $C_1$ and $m'$ be the number of the curves $E_i$'s intersecting $C_1'$. We may assume that $E_1, \ldots, E_m$ intersect $C_1$ and that $E_{m+1},\ldots, E_r$ meet $C_1'$. Furthermore, we may assume that $m\geqslant m'$.  Note that $m+m'=r$ . Furthermore,  by the assumption above, $2\leqslant m\leqslant 5$.
Let $\overline{C}_1$ and $\overline{C}_1'$ be the images $C_1$ and $C_1'$ by $\phi$, respectively.
The curve $\overline{C}_1$ is an $(m-1)$-curve on the del Pezzo surface $Z$. Since $m\geqslant 2$, the complete linear system 
$|\overline{C}_1|$ induces a birational morphism $\psi$ of $Z$ into $\mathbb{P}^{m}$. Furthermore, its image is isomorphic to 
\[\left\{%
\aligned 
& \mathbb{P}^2 \mbox{ for $m=2$};\\
& \mathbb{P}^1\times \mathbb{P}^1\cong \mbox{a smooth quadratic  surface} \subset \mathbb{P}^3 \mbox{ for $m=3$};\\
& \mathbb{F}_1\cong\mbox{ a smooth rational normal scroll of degree $3$ in $\mathbb{P}^4$ for $m=4$};\\
&  \mathbb{P}^2\cong\mbox{ a Veronese surface  in $\mathbb{P}^5$ for $m=5$}.\\
\endaligned\right.\]
Put $\sigma=\psi\circ\phi$.

For $m=2$ and $5$, we have $(7-r)$ disjoint $(-1)$-curves on $Z$ that are contracted by $\psi$. These curves do not intersect 
$\overline{C}_1$ but they meet $\overline{C}_1'$ since $\overline{C}_1+\overline{C}_1'$ is an anticanonical divisor of $Z$. 
Let $F_{1},\ldots, F_{7-r}$ be
the pull-backs of these $(7-r)$ disjoint $(-1)$-curves by $\phi$. Then the curve $C_1$ intersects exactly $m$ curves $E_1,\ldots E_m$ and the curve $C_1'$ meets 
the other $m'$ curves $E_{m+1},\ldots E_r$ and all the $(7-r)$ curves $F_{1},\ldots, F_{7-r}$.

For $m=2$, $\sigma(C_1)$ is a line and $\sigma(C_1')$ is a conic  in $\mathbb{P}^2$. For $m=5$, $\sigma(C_1)$ is a conic and $\sigma(C_1')$ is a line  in $\mathbb{P}^2$. They intersect tangentially at a single point.
Therefore, 
\[-K_{S}\equiv (1-a\varepsilon)C_1+(1+b\varepsilon)C_1'-a\varepsilon\sum_{i=1}^{m}E_i+b\varepsilon\sum_{i=m+1}^{r}E_i+b\varepsilon\sum_{i=1}^{7-r}F_i,\]
and hence
\[\mu H\equiv (1-a\varepsilon)C_1+(1+b\varepsilon)C_1'+\sum_{i=1}^{m}(a_i-a\varepsilon)E_i+\sum_{i=m+1}^{r}(a_i+b\varepsilon)E_i+b\varepsilon\sum_{i=1}^{7-r}F_i,\]
where $a=2$, $b=1$ if $m=2$ and $a=1$, $b=2$ if $m=5$.
For a sufficiently small positive real number $\varepsilon$, $H$ yields a cylinder because 
$$S\setminus (C_1 \cup C_1'\cup E_1\cup \ldots \cup E_r\cup F_1\cup\ldots\cup F_{7-r})\cong \mathbb{P}^2\setminus (\sigma(C_1)\cup\sigma(C_1')).$$ 

For $m=3$ and $4$, we have $(6-r)$ disjoint $(-1)$-curves on $Z$ that are contracted by $\psi$. These curves do not intersect 
$\overline{C}_1$ but they meet $\overline{C}_1'$ since $\overline{C}_1+\overline{C}_1'$ is an anticanonical divisor of $Z$. 
Again we let $F_{1},\ldots, F_{6-r}$ be
the pull-backs of these $(6-r)$ disjoint $(-1)$-curves by $\phi$. Then the curve $C_1$ intersects exactly $m$ curves $E_1,\ldots E_m$ and the curve $C_1'$ meets 
the other $m'$ curves $E_{m+1},\ldots E_r$ and all the $(6-r)$ curves $F_{1},\ldots, F_{6-r}$.

For $m=3$, $\sigma(C_1)$ is an irreducible curve of bidegree $(1,1)$ in $\mathbb{P}^1\times \mathbb{P}^1$ since its self-intersection number is $2$. The irreducible curve $\sigma(C_1')$ is also of bidegree $(1,1)$ because $\sigma(C_1)+\sigma(C_1')$ is an anticanonical divisor of $\mathbb{P}^1\times \mathbb{P}^1$.  They intersect tangentially at a single point $Q$. Let $L_1$ and $L_2$ be the curves of bidegrees  $(1,0)$ and $(0,1)$, respectively, passing through $Q$. Then,
\[-K_{S}\equiv (1-2\varepsilon)C_1+(1+\varepsilon)C_1'+\varepsilon(\tilde{L}_1+\tilde{L}_2)-2\varepsilon\sum_{i=1}^{3}E_i+\varepsilon\sum_{i=4}^{r}E_i+\varepsilon\sum_{i=1}^{6-r}F_i,\]
where $\tilde{L}_1$ and $\tilde{L}_2$ are the proper transforms of $L_1$ and $L_2$ by $\sigma$, respectively.
Therefore 
\[\mu H\equiv (1-2\varepsilon)C_1+(1+\varepsilon)C_1'+\varepsilon(\tilde{L}_1+\tilde{L}_2)+\sum_{i=1}^{3}(a_i-2\varepsilon)E_i+\sum_{i=4}^{r}(a_i+\varepsilon)E_i+\varepsilon\sum_{i=1}^{6-r}F_i.\]
For a sufficiently small positive real number $\varepsilon$, we obtain an $H$-polar cylinder because 
$$S\setminus (C_1 \cup C_1'\cup \tilde{L}_1\cup \tilde{L}_2\cup E_1\cup \ldots \cup E_r\cup F_1\cup\ldots\cup F_{6-r})\cong \mathbb{P}^1\times \mathbb{P}^1\setminus (\sigma(C_1)\cup\sigma(C_1')\cup L_1\cup L_2)$$ 
(see Example~\ref{example:non-Prokhorov example8-0}).

For $m=4$, $\sigma(C_1)$ is a 3-curve in $\mathbb{F}_1$. The irreducible curve $\sigma(C_1')$ is a 1-curve intersecting $\sigma(C_1)$ at a single point $Q$ tangentially. Let $M$ be the $0$-curve passing through the point $Q$.
Example~\ref{example:non-Prokhorov example8-3} shows that 
\[-K_{S}\equiv (1-\varepsilon)C_1+(1+\varepsilon)C_1'+\varepsilon\tilde{M}-\varepsilon\sum_{i=1}^{4}E_i+\varepsilon\sum_{i=5}^{r}E_i+\varepsilon\sum_{i=1}^{6-r}F_i\]
and
\[\mu H\equiv (1-\varepsilon)C_1+(1+\varepsilon)C_1'+\varepsilon\tilde{M}+\sum_{i=1}^{4}(a_i-\varepsilon)E_i+\sum_{i=5}^{r}(a_i+\varepsilon)E_i+\varepsilon\sum_{i=1}^{6-r}F_i,\]
where $\tilde{M}$ is the proper transform of $M$. We see also from Example~\ref{example:non-Prokhorov example8-3}  that $H$ defines a cylinder with a sufficiently small positive real number $\varepsilon$.

\bigskip
\textbf{Case 2.}  The surface $S$ possesses a cuspidal rational curve $C$ in $|-K_S|$.
\medskip

Let $P$ be the point at which the curve $C$ has the cusp. Each $E_i$ intersects the curve $C$ at a single smooth point. This cuspidal curve $C$ plays a key role in constructing $H$-polar cylinders case by case, according to $r$.

\bigskip
\textbf{Subcase 1.}  $r=3$.
\medskip

In this subcase, the surface $S$ has five  $0$-curves $F_1,\ldots, F_5$ such that
\begin{itemize}
\item they pass through $P$;
\item they do not meet each other outside $P$;
\item they are disjoint from   the curves $E_1$, $E_2$ and $E_3$.
\end{itemize}
(For the better understanding of this construction, see Example~\ref{example:non-Prokhorov example5-4} after contracting the $(-1)$-curves
$E_1$, $E_2$, $E_3$ to a smooth del Pezzo surface of degree $5$.)

Let $\pi:\tilde{S}\to S$ be the blow up at the point $P$ and let $E$ be the exceptional curve of $\pi$. Then $\tilde{S}$ is a weak del Pezzo surface of degree $1$ and it has exactly one $(-2)$-curve, the proper transform $\tilde{C}$ of $C$. Denote
 the proper transforms on $\tilde{S}$ of the curves
$E_1$, $E_2$, $E_3$, $F_1, \ldots, F_5$ by $\tilde{E}_1$, $\tilde{E}_2$, $\tilde{E}_3$, $\tilde{F}_1, \ldots, \tilde{F}_5$. Since these $(-1)$-curves are disjoint, they give us 
 a contraction $\psi:\tilde{S}\to \mathbb{P}^2$.
 
 Since $\psi(E)$ is a conic and $\psi(\tilde{C})$ is a line on $\mathbb{P}^2$, we immediately see that 
\[-K_{\tilde{S}}\equiv (1-2\varepsilon)\tilde{C}+(1+\varepsilon)E+\varepsilon\sum_{i=1}^5\tilde{F}_i-2\varepsilon(\tilde{E}_1+\tilde{E}_2+\tilde{E}_3), \]
and hence
\[-K_{S}\equiv (1-2\varepsilon)C+\varepsilon\sum_{i=1}^5F_i-2\varepsilon(E_1+E_2+E_3).\]
Therefore, 
\[\mu H\equiv (1-2\varepsilon)C+\varepsilon\sum_{i=1}^5F_i+(a_1-2\varepsilon)E_1+(a_2-2\varepsilon)E_2+(a_3-2\varepsilon)E_3.\]
For a sufficiently small positive real number $\varepsilon$, this is an $H$-polar cylinder because 
$$S\setminus (C \cup F_1\cup F_2\cup F_3\cup F_4\cup F_5\cup E_1\cup E_2\cup E_3)\cong \mathbb{P}^2\setminus (\psi(E)\cup\psi(\tilde{C})).$$ Note that  the conic $\psi(E)$ and the line $\psi(\tilde{C})$ meet tangentially.

\bigskip
\textbf{Subcase 2.}  $r=4$.
\medskip

In this subcase, the surface $S$ has three  $0$-curves $F_1, F_2, F_3$ such that \begin{itemize}
\item they pass through $P$;
\item they do not intersect  each other outside $P$;
\item they are disjoint from   the curves $E_1$, $E_2$, $E_3$, $E_4$.
\end{itemize}
 In addition, it has two $1$-curves $G_1, G_2$
 such that \begin{itemize}
\item they intersect $C$ only at $P$;
\item they do not meet   each other outside $P$;
\item they are disjoint from   the curves $E_1$, $E_2$, $E_3$, $E_4$.
\end{itemize}
(See Example~\ref{example:non-Prokhorov example6-1}  after contracting the $(-1)$-curves
$E_1$, $E_2$, $E_3$, $E_4$ to a smooth del Pezzo surface of degree $6$.)

Let $\pi:\tilde{S}\to S$ be the blow up at the point $P$ and let $E$ be the exceptional curve of $\pi$. Then $\tilde{S}$ is a weak del Pezzo surface of degree $1$ and it has exactly one $(-2)$-curve, the proper transform $\tilde{C}$ of $C$. Denote
 the proper transforms on $\tilde{S}$ of the curves
$E_1,\ldots, E_4$, $F_1$, $F_2$, $F_3$, $G_1$, $G_2$ by $\tilde{E}_1,\ldots, \tilde{E}_4$, $\tilde{F}_1$, $\tilde{F}_2$, 
$\tilde{F}_3$, $\tilde{G}_1$, $\tilde{G}_2$. 
Contracting the seven $(-1)$-curves $\tilde{E}_1,\ldots, \tilde{E}_4$, $\tilde{F}_1$, $\tilde{F}_2$, 
$\tilde{F}_3$, we obtain a birational morphism $\psi:\tilde{S}\to \mathbb{P}^1\times\mathbb{P}^1$.
 Since $\psi(E)$ and $\psi(\tilde{C})$ are curves of bidegree $(1,1)$ on $\mathbb{P}^1\times\mathbb{P}^1$ and $\psi(\tilde{G}_1)$,
 $\psi(\tilde{G}_2)$ are curves of bidegrees $(1,0)$ and $(0,1)$, respectively, on $\mathbb{P}^1\times\mathbb{P}^1$, 
 \[-K_{\tilde{S}}\equiv (1-2\varepsilon)\tilde{C}+(1+\varepsilon)E+
\varepsilon(\tilde{F}_1+\tilde{F}_2+\tilde{F}_3)+
\varepsilon(\tilde{G}_1+\tilde{G}_2)
-2\varepsilon\sum_{i=1}^4\tilde{E}_i, \]
and hence
\[-K_{S}\equiv (1-2\varepsilon)C+
\varepsilon(F_1+F_2+F_3)+
\varepsilon(G_1+G_2)
-2\varepsilon\sum_{i=1}^4E_i. \]
Therefore, 
\[\mu H\equiv (1-2\varepsilon)C+
\varepsilon(F_1+F_2+F_3)+
\varepsilon(G_1+G_2)+
\sum_{i=1}^4(a_i-2\varepsilon)E_i.\]
For a sufficiently small positive real number $\varepsilon$, this defines an $H$-polar cylinder because 
$$S\setminus (C \cup F_1\cup F_2\cup F_3\cup G_1\cup G_2\cup E_1\cup E_2\cup E_3\cup E_4)\cong \mathbb{P}^1\times\mathbb{P}^1\setminus (\psi(E)\cup\psi(\tilde{C})\cup \psi(\tilde{G}_1)\cup \psi(\tilde{G}_2)).$$
Note that  the curves $\psi(E)$ and $\psi(\tilde{C})$ meet tangentially at one point and the curves $\psi(\tilde{G}_1)$, 
$\psi(\tilde{G}_2)$ pass through this point (see Example~\ref{example:non-Prokhorov example8-0}).

\bigskip
\textbf{Subcase 3.}  $r=5$.
\medskip

There are two $0$-curves $L_1$, $L_2$ such that \begin{itemize}
\item they pass through $P$;
\item they do not intersect  each other outside $P$;
\item they are disjoint from   the curves $E_1,\ldots, E_5$.
\end{itemize}
 In addition, there is a unique $1$-curve 
$T$ that meets $C$ only at the point $P$ and that does not intersect any of $E_i$'s (see Example~\ref{example:non-Prokhorov example7-1}).

By contracting  the $(-1)$-curves $E_1,\ldots, E_5$,  we immediately see from Example~\ref{example:non-Prokhorov example7-1} that
\[-K_{S}\equiv(1-\varepsilon)C+\varepsilon L_1 +\varepsilon L_2 +\varepsilon T-\varepsilon\sum^5_{i=1}E_i, \]
and hence 
\[\mu H\equiv
(1-\varepsilon)C+\varepsilon L_1 +\varepsilon L_2 +\varepsilon T+\sum^5_{i=1}(a_i-\varepsilon)E_i.\]
Example~\ref{example:non-Prokhorov example7-1}  shows that for a sufficiently small positive real number  $\varepsilon$, this defines an $H$-polar cylinder.

\bigskip
\textbf{Subcase 4.}  $r=6$ and $Z\cong \mathbb{P}^1\times\mathbb{P}^1$.
\medskip

There are  exactly two $0$-curves $F_1, F_2$ passing through the point $P$ and not intersecting any of $E_i$'s.
(By contracting $E_1,\ldots, E_6$  into the surface $\mathbb{P}^1\times \mathbb{P}^1$, we can easily detect such $0$-curves.) 

Let $\pi:\tilde{S}\to S$ be the blow up at the point $P$ and let $E$ be the exceptional curve of $\pi$. Then $\tilde{S}$ is a weak del Pezzo surface of degree $1$ and it has exactly one $(-2)$-curve, the proper transform $\tilde{C}$ of $C$. Denote
 the proper transforms on $\tilde{S}$ of the curves
$E_1,\ldots, E_6$, $F_1$, $F_2$ by $\tilde{E}_1,\ldots, \tilde{E}_6$, $\tilde{F}_1$, $\tilde{F}_2$. 
Contracting the $(-1)$-curves $\tilde{E}_1,\ldots, \tilde{E}_6$, $\tilde{F}_1$, $\tilde{F}_2$, 
we obtain a birational morphism $\psi:\tilde{S}\to \mathbb{P}^2$.
Note that $\psi(C)$ and  $\psi (E)$ are a conic and a line meeting tangentially on $\mathbb{P}^2$. 
Therefore, 
\[-K_{S}\equiv (1-\varepsilon)C+2\varepsilon(F_1+F_2)-\varepsilon\sum^6_{i=1}E_i, \]
and hence 
\[\mu H\equiv
(1-\varepsilon)C+2\varepsilon(F_1+F_2)+\sum^6_{i=1}(a_i-\varepsilon)E_i.\]
For a sufficiently small positive real number $\varepsilon$, this defines an $H$-polar cylinder since
$$S\setminus (C \cup F_1\cup F_2\cup E_1\cup\ldots\cup E_6)\cong \mathbb{P}^2\setminus (\psi(\tilde{C})\cup \psi(E)).$$

\bigskip
\textbf{Subcase 5.}  $r=6$ and $Z\cong \mathbb{F}_1$.
\medskip

There is a unique $0$-curve $L$ passing through the point $P$ and not meeting any of $E_i$'s. In addition, there is a unique $1$-curve 
$T$ that intersects $C$ only at the point $P$ and that does not intersect any of $E_i$'s.
(By contracting $E_1,\ldots, E_6$  into the Hirzebruch surface $\mathbb{F}_1$, we can easily detect such curves.)

Example~\ref{example:non-Prokhorov example8-2} shows that
\[-K_{S}\equiv (1-\varepsilon)C+\varepsilon L +2\varepsilon T-\varepsilon\sum^6_{i=1}E_i, \]
and hence 
\[\mu H\equiv
(1-\varepsilon)C+\varepsilon L +2\varepsilon T+\sum^6_{i=1}(a_i-\varepsilon)E_i.\]
It also shows that
for a sufficiently small positive real number  $\varepsilon$, this defines an $H$-polar cylinder since
$$S\setminus (C \cup L\cup T\cup E_1\cup\ldots\cup E_6)\cong \mathbb{F}_1\setminus (\phi(C)\cup \phi(L)\cup \phi(T)).$$ 

\bigskip
\textbf{Subcase 6.}  $r=7$.
\medskip

By contracting $E_1,\ldots, E_7 $ we obtain a birational morphism $\pi$ of $S$ onto the projective plane $\mathbb{P}^2$. 
The curve $\pi(C)$ is a cuspidal cubic curve passing through all the points $\pi(E_i)$'s.
Let $T$ be the Zariski tangent line to the curve $\pi(C)$ at its cuspidal point.
We immediately see that
\[-K_{S}\equiv (1-\varepsilon)\pi^*(\pi(C))+3\varepsilon\pi^*(T)-\sum^7_{i=1}E_i\equiv
(1-\varepsilon)C+3\varepsilon\tilde{T}-\varepsilon\sum^7_{i=1}E_i, \]
where $\tilde{T}$ is the proper transform of $T$ by $\pi$,
and hence 
\[\mu H \equiv
(1-\varepsilon)C+3\varepsilon\tilde{T}+\sum^7_{i=1}(a_i-\varepsilon)E_i.\]
For a sufficiently small positive real number $\varepsilon$, this defines an $H$-polar cylinder since $$S\setminus (C \cup \tilde{T}\cup E_1\cup\ldots\cup E_7)\cong \mathbb{P}^2\setminus (\pi(C)\cup T)$$  (see Example~\ref{example:cusp}).
\end{proof}

For the following theorem we assume that $a_1\geqslant\ldots\geqslant a_r$ in \eqref{equation:coefficient}.

\begin{theorem}
\label{theorem:BR2} Suppose that  $H$ is of type $B(2)$. If one of the following conditions holds
\begin{itemize}
\item $2a_2>1$;
\item $2a_1+a_2>2$,
%\item there is a tacnodal curve in $|-K_S|$,
\end{itemize}
then $S$ contains an $H$-polar cylinder.
\end{theorem}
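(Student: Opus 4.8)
The plan is to contract $E_1$ and $E_2$, reducing the problem to producing a suitable anticanonical cylinder on a smooth del Pezzo surface of degree $4$. Let $\phi\colon S\to Z$ be this contraction, so $Z$ is a smooth del Pezzo surface of degree $4$; writing $q_i=\phi(E_i)$, smoothness of $S$ forces $q_1\neq q_2$ and forbids either $q_i$ from lying on a $(-1)$-curve of $Z$. From $K_S+\mu H\equiv a_1E_1+a_2E_2$ and $K_S\equiv\phi^*K_Z+E_1+E_2$ one gets
$$\mu H\equiv\phi^*(-K_Z)-(1-a_1)E_1-(1-a_2)E_2 .$$
The driving remark is: if $D_Z$ is an effective $\mathbb{R}$-divisor with $D_Z\equiv -K_Z$, with $Z\setminus\mathrm{Supp}(D_Z)$ a cylinder, and with $\mathrm{mult}_{q_i}(D_Z)>1-a_i$ for $i=1,2$, then
$$D:=\widetilde{D_Z}+\sum_{i=1}^{2}\big(\mathrm{mult}_{q_i}(D_Z)-(1-a_i)\big)E_i$$
is an effective $\mathbb{R}$-divisor with $D\equiv\mu H$ and $\mathrm{Supp}(D)=\phi^{-1}(\mathrm{Supp}(D_Z))$, so $S\setminus\mathrm{Supp}(D)\cong Z\setminus\mathrm{Supp}(D_Z)$ is an $H$-polar cylinder (here $\widetilde{D_Z}$ is the proper transform, and one uses $\phi^*D_Z=\widetilde{D_Z}+\sum_i\mathrm{mult}_{q_i}(D_Z)E_i$). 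Thus it suffices to find, on $Z$, an anticanonical cylinder whose boundary divisor has multiplicity exceeding $1-a_i$ at the prescribed point $q_i$.

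If $|-K_S|$ contains a cuspidal rational curve $C$, then $E_i$ meets $C$ transversally at a smooth point, so $\phi(C)$ is a cuspidal anticanonical curve on $Z$ and $q_1,q_2$ are distinct smooth points of it. Transporting the construction from the proof of Theorem~\ref{theorem:BR3-7} (Case~2) to $Z$ — passing to a birational model and using Example~\ref{example:cusp} — produces an effective $D_Z\equiv -K_Z$ of the form $(1-k\varepsilon)\phi(C)+\varepsilon(\text{finitely many }0\text{-curves through the cusp})$ with cylindrical complement, whose multiplicity at $q_i$ is at least $1-k\varepsilon>1-a_i$ once $\varepsilon$ is small. Hence when $S$ carries a cuspidal anticanonical curve the conclusion holds for all $a_1,a_2>0$, in particular under either hypothesis. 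We may therefore assume that $|-K_S|$ has no cuspidal member, i.e. $S$ is one of the (at most two) del Pezzo surfaces of degree $2$ all of whose degenerate anticanonical members are tacnodal curves $C_j+C_j'$ ($j=1,\dots,12$), the $24$ curves $C_j,C_j'$ being distinct $(-1)$-curves.

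For such $S$ one must build $D_Z$ out of a tacnodal anticanonical divisor, and this is where the hypotheses enter. Choose a tacnodal member $C_0+C_0'$ with $E_1,E_2\notin\{C_0,C_0'\}$ (only two of the $24$ curves are excluded); since $E_i\cdot(C_0+C_0')=1$, each $E_i$ meets exactly one of $C_0,C_0'$, transversally and away from the tacnode. If $E_1$ and $E_2$ meet the same component, say $C_0$, then $\phi(C_0)$ is a $1$-curve and $\phi(C_0')$ a $(-1)$-curve with $\phi(C_0)+\phi(C_0')\equiv -K_Z$ tacnodal, and Example~\ref{example:non-Prokhorov example4} furnishes $D_Z$ of the form $(1-3\varepsilon)\phi(C_0)+(1-\varepsilon)\phi(C_0')+\varepsilon\sum_{i=1}^{5}T_i$, whose multiplicity at $q_i\in\phi(C_0)$ is $1-3\varepsilon>1-a_i$ for small $\varepsilon$ — again no hypothesis is needed. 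The genuinely delicate case, responsible for the conditions $2a_2>1$ and $2a_1+a_2>2$, is when every tacnodal member is met by $E_1$ and $E_2$ in different components. Then, after the contraction, $\phi(C_0)$ is a fibre of a conic bundle on $Z$ and $\phi(C_0')$ a $2$-section meeting it tangentially at the tacnode (the configuration of Examples~\ref{example:non-Prokhorov example8-0}--\ref{example:non-Prokhorov example8-1} transported through a birational model), and the remaining curves one is forced to delete to reach a cylinder do not pass through $q_1$ or $q_2$ with positive coefficient; hence $\mathrm{mult}_{q_i}(D_Z)$ is controlled solely by the coefficients of $\phi(C_0),\phi(C_0')$, which the tangency forces into the shape $1-2\varepsilon$ (resp. $1+\varepsilon$) rather than $1-\varepsilon$. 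Imposing effectivity of the resulting $D\equiv\mu H$ then becomes a linear feasibility problem whose perturbation budget is capped above by a surviving coefficient $1-c\varepsilon>0$ and must simultaneously cover $1-a_1$ and $1-a_2$ against $E_1$ and $E_2$ — once with weight two for the component absorbing the node — and this is solvable precisely when $2(1-a_2)<1$, i.e. $2a_2>1$, or when $2(1-a_1)+(1-a_2)<1$, i.e. $2a_1+a_2>2$. The main obstacle is exactly this last step: pinning down the correct auxiliary curves on $Z$ in this configuration and checking that their classes, together with those of $\phi(C_0),\phi(C_0'),E_1,E_2$, span a cone containing $\mu H$ under precisely the stated inequalities; everything else is either immediate or a transcription of the cylinders assembled in Section~\ref{section:cylinders}.
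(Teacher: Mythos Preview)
Your approach is genuinely different from the paper's, and the outline contains real gaps.

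The paper does not contract only $E_1,E_2$; it completes $E_1,E_2$ to seven disjoint $(-1)$-curves $E_1,\ldots,E_7$ and works directly on $\mathbb{P}^2$ via $\sigma\colon S\to\mathbb{P}^2$. Taking the conic $C_1$ through $P_3,\ldots,P_7$, it then gives two explicit configurations, one per hypothesis: for $2a_2>1$ a second conic $C_2$ through $P_1,P_2$ meeting $C_1$ at a single point together with the common tangent $T$, and for $2a_1+a_2>2$ a line $L$ through $P_2$ tangent to $C_1$ together with a conic $C_3$ through $P_1$ meeting $C_1$ only at $L\cap C_1$. In each case one writes $\mu H$ as an explicit effective combination of the proper transforms and the $E_i$, and the inequalities arise transparently from the coefficients of $E_1$ and $E_2$. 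There is no cuspidal/tacnodal dichotomy and no appeal to cylinders on the intermediate degree $4$ surface.

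Your route via $Z$ of degree $4$ is conceptually clean, but the proposal leaves the essential constructions unwritten. In the cuspidal case you assert that a $D_Z\equiv -K_Z$ of the shape $(1-k\varepsilon)\phi(C)+\varepsilon\sum(\text{$0$-curves through the cusp})$ exists with cylindrical complement; none of the examples in Section~\ref{section:cylinders} provides this on a degree $4$ surface with an \emph{irreducible} cuspidal anticanonical curve (Example~\ref{example:non-Prokhorov example4} uses a reducible one), and the direct pullback of the Example~\ref{example:cusp} divisor from $\mathbb{P}^2$ produces negative coefficients on the $(-1)$-curves. So the claim that the cuspidal case works for all $a_1,a_2>0$---which would be strictly stronger than the theorem---is unsupported. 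In the tacnodal different-components case your geometric description is off ($\phi(C_0)$ and $\phi(C_0')$ are both $0$-curves, not a fibre and a $2$-section), and you yourself flag the ``linear feasibility'' step as the unresolved obstacle without exhibiting the auxiliary curves or carrying out the computation that would tie the two inequalities to effectivity. As written, this is a plan rather than a proof; the paper's $\mathbb{P}^2$ approach avoids all of these difficulties by making the curves and coefficients completely explicit.
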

\begin{proof}
There are five $(-1)$-curves $E_3, \ldots, E_7$ on $S$ such that they, together with $E_1$ and $E_2$, define a birational morphism $\sigma:S\to\mathbb{P}^2$. Denote the point $\sigma(E_i)$ by $P_i$ for $i=1,\ldots, 7$.
Let $C_1$ be the conic that passes through the points $P_3,\ldots, P_7$.

 Suppose that the inequality $2a_2>1$ is satisfied.

 There is a conic $C_2$ passing through the points $P_1, P_2$ and meeting the conic $C_1$ only at a single point.
Let $T$ be the tangent line to both the conics $C_1$ and $C_2$ at the intersection point of $C_1$ and $C_2$.
For any real number $\varepsilon$ we have
$-K_{\mathbb{P}^2}\equiv \left(1+\varepsilon\right) C_1 +
\left(\frac{1}{2}-2\varepsilon\right)C_2+ 2\varepsilon T$. Hence,
\[
\begin{split}
-K_{S}&\sim_{\mathbb{Q}}\sigma^*\left(-K_{\mathbb{P}^2}\right)-\sum_{i=1}^{7} E_i\\
&\equiv\left(1+\varepsilon\right) \tilde{C}_1 +
\left(\frac{1}{2}-2\varepsilon\right)\tilde{C}_2+2\varepsilon \tilde{T}-\left(\frac{1}{2}+2\varepsilon\right)\left(E_1+E_2\right)+ \varepsilon\sum_{i=3}^{7}E_i,\\
\end{split}
\]
where $\tilde{C}_1$, $\tilde{C}_2$, $\tilde{T}$  are the proper transforms of $C_1$, $C_2$, and $T$, respectively.
Thus, we have
\[
\begin{split}
 H
&\equiv\frac{1}{\mu}\left\{\left(1+\varepsilon\right) \tilde{C}_1 +
\left(\frac{1}{2}-2\varepsilon\right)\tilde{C}_2+2\varepsilon \tilde{T}\right.+\\ 
&\phantom{\sim_{\mathbb{Q}}}+\left. \left(a_1-\frac{1}{2}-2\varepsilon\right)E_1+\left(a_2-\frac{1}{2}-2\varepsilon\right)E_2+ \varepsilon\sum_{i=3}^{7}E_i.\right\}\\
\end{split}
\]
Since $a_1-\frac{1}{2}\geqslant a_2-\frac{1}{2}>0$, for a sufficiently small  positive real number $\varepsilon$  this defines
 an $H$-polar cylinder on $S$.

Suppose that the inequality $2a_1+a_2>2$ is satisfied.

Let $L$ be a line passing through the point $P_2$ and tangent to the conic $C_1$. Let $C_3$ be the conic  that intersects $C_1$ only at the point where $C_1$ and $L$ meet and that passes through $P_1$.
For any real numbers $\beta$ and  $\varepsilon$ we have
$$-K_{\mathbb{P}^2}\equiv (1+2\varepsilon ) C_1 +
(\beta-\varepsilon)C_3+(1-2\beta-2\varepsilon)L.$$ Hence,
\[
\begin{split}
-K_{S}&\sim_{\mathbb{Q}}\sigma^*\left(-K_{\mathbb{P}^2}\right)-\sum_{i=1}^{7} E_i\\
&\equiv\left(1+2\varepsilon \right) \tilde{C}_1 +
\left(\beta-\varepsilon\right)\tilde{C}_3+\left(1-2\beta-2\varepsilon\right)\tilde{L}+\left(\beta-\varepsilon-1\right)E_1-2\left(\beta+\varepsilon\right)E_2+ 2\varepsilon\sum_{i=3}^{7}E_i,\\
\end{split}
\]
where  $\tilde{C}_3$, $\tilde{L}$ are the proper transforms of  $C_3$, $L$, respectively.
Thus, we have
\[
\begin{split}
 H
&\equiv\frac{1}{\mu}\left\{\left(1+2\varepsilon\right) \tilde{C}_1 +
\left(\beta-\varepsilon\right)\tilde{C}_3+\left(1-2\beta-2\varepsilon\right)\tilde{L}+\right. \\ &\phantom{\sim_{\mathbb{Q}}}+\left. \left(a_1+\beta-\varepsilon-1\right)E_1+\left(a_2-2\beta-2\varepsilon\right)E_2+ 2\varepsilon\sum_{i=3}^{7}E_i\right\}.\\
\end{split}
\]
By putting $\beta=\frac{3}{2}\varepsilon+1-a_1$  with a sufficiently small  positive real number $\varepsilon$, we are able to obtain
 an $H$-polar cylinder on $S$.
\end{proof}

From now on, we suppose that the ample $\mathbb{R}$-divisor $H$ is of type $C(7)$ with length $\ell$.
Then the morphism $\phi:S\to Z$ is a conic bundle, i.e., $Z=\mathbb{P}^1$. 
We may write
$$
K_{S}+\mu H\equiv aB+\sum_{i=1}^{\ell}a_iE_i
$$
where $a$ and $a_i$ are positive real numbers, $B$ is an irreducible fiber of $\phi$, and  $E_i$'s are disjoint $(-1)$-curves in fibers of $\phi$.  
There exist $(6-\ell)$ disjoint $(-1)$-curves $\hat{E}_1,\ldots, \hat{E}_{6-\ell}$ such that they are in fibers of $\phi$ and they generate the face $\Delta$ together with $B$ and $E_i$'s. Let $\phi_1: S\to W$ be the birational morphism obtained by contracting the disjoint $(-1)$-curves $E_1,\ldots, E_\ell$, $\hat{E}_1,\ldots, \hat{E}_{6-\ell}$.
Let $\hat{E}_j'$ be the $(-1)$-curve such that $\hat{E}_j+\hat{E}_j'$ is a fiber of $\phi$.

\begin{lemma}\label{theorem:no-cuspidal}
Suppose that the surface $S$ has no cuspidal rational curve in $|-K_S|$.
If the ample $\mathbb{R}$-divisor $H$ is of type $C(7)$ with length $3\leqslant \ell\leqslant 6$, then $S$ contains an $H$-polar cylinder.
\end{lemma}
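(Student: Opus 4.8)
The plan is to adapt the argument of Case~1 in the proof of Theorem~\ref{theorem:BR3-7} to the conic-bundle situation, exploiting the twelve tacnodal anticanonical divisors that $S$ carries whenever $|-K_S|$ contains no cuspidal rational curve. Recall that the Fujita face $\Delta$ is generated by the class $B$ of a general fibre of $\phi$ together with six $(-1)$-curves, one lying in each reducible fibre of $\phi$: the curves $E_1,\ldots,E_\ell$ appearing with positive coefficients $a_1,\ldots,a_\ell$ in $K_S+\mu H\equiv aB+\sum_{i=1}^\ell a_iE_i$, and the curves $\hat E_1,\ldots,\hat E_{6-\ell}$ which appear with coefficient $0$ but still lie on $\Delta$. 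Contracting all six of these disjoint $(-1)$-curves gives $\phi_1\colon S\to W$ with $W\cong\mathbb{P}^1\times\mathbb{P}^1$ or $W\cong\mathbb{F}_1$, and the conic bundle $\phi$ descends to the ruling on $W$.

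First I would fix a tacnodal anticanonical divisor $C_1+C_1'$, the two curves $C_1$ and $C_1'$ being distinct $(-1)$-curves meeting tangentially at a point $P$, chosen so that it is in general position with respect to the data of $\phi_1$: since the twenty four curves arising from the twelve tacnodal divisors are distinct and there are only finitely many curves to be avoided — the six contracted curves $E_i$, $\hat E_j$, their partners in the reducible fibres, and the fibre class $B$ — a suitable choice is available, and when $\ell$ forces the images $\phi_1(C_1)$ or $\phi_1(C_1')$ to be special one imposes the extra genericity condition exactly as in Theorem~\ref{theorem:BR3-7}. Pushing $C_1$ and $C_1'$ down by $\phi_1$, and, when $W\cong\mathbb{F}_1$, further down to $\mathbb{P}^2$, I would then recognise the resulting configuration of a section tangent to a section (together with fibres through $P$, the $1$-curve meeting $C_1$ only at $P$, and suitable tangent lines or conics) as one of the basic cylinders of Examples~\ref{example:line-and-conic}, \ref{example:cusp}, \ref{example:non-Prokhorov example8-0}, \ref{example:non-Prokhorov example8-1}, \ref{example:non-Prokhorov example8-2}, and~\ref{example:non-Prokhorov example8-3}, the precise model being determined by $\ell$, by whether $W\cong\mathbb{P}^1\times\mathbb{P}^1$ or $W\cong\mathbb{F}_1$, and by how many of the six reducible fibres meet $C_1$ rather than $C_1'$. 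The complement
\[
S\setminus\bigl(C_1\cup C_1'\cup E_1\cup\cdots\cup E_\ell\cup\hat E_1\cup\cdots\cup\hat E_{6-\ell}\cup(\text{a few auxiliary curves})\bigr)
\]
is then a cylinder by the cited example.

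To turn it into an $H$-polar cylinder I would take the numerical equivalence for $-K_S$ provided by the relevant example, pulled back through the contractions, which writes $-K_S$ as a combination of $C_1$, $C_1'$, the auxiliary curves, and the contracted curves, the last appearing with coefficients of size $O(\varepsilon)$ for a free parameter $\varepsilon$; adding $aB+\sum_{i=1}^\ell a_iE_i$ turns this into an expression for $\mu H$ in which each $E_i$ acquires coefficient $a_i\pm\varepsilon$, each $\hat E_j$ acquires coefficient $\pm\varepsilon$, and $B$ keeps a positive coefficient, so that every coefficient is positive once $\varepsilon$ is a sufficiently small positive real number; the sub-cases in which $B$ or some $\hat E_j$ would receive a negative coefficient are resolved as in Case~2 of Lemma~\ref{lemma:deg-3-cylinders-conic-bundle}, after replacing the offending $\hat E_j$ by its partner $\hat E_j'$ and $\phi_1$ by the corresponding contraction. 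The main obstacle is organisational rather than conceptual: one must run the case analysis in $\ell$ (and in $W$), check in each case that the auxiliary curves exist and are disjoint from the contracted curves, and confirm that twelve tacnodal divisors genuinely suffice to avoid every obstruction; as in Theorem~\ref{theorem:BR3-7}, the degenerate sub-cases where $P$ or a component of $C_1+C_1'$ lands on a contracted curve only enlarge the relevant coefficients and so, by the remark following Lemma~\ref{lemma:deg-3-cylinders-birational-1}, need no separate treatment.
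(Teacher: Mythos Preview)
Your proposal is correct and follows essentially the same route as the paper's proof: both choose one of the twelve tacnodal divisors $C_1+C_1'$ in sufficiently general position, push it down by $\phi_1$ to $W$, identify the resulting configuration with one of the basic cylinders of Examples~\ref{example:non-Prokhorov example8-0}, \ref{example:non-Prokhorov example8-1}, \ref{example:non-Prokhorov example8-3}, and then run a case analysis governed by $\ell$ and by the pair $(m,m')$ counting how many contracted curves meet $C_1$ versus $C_1'$, using the swap $\hat E_j\leftrightarrow\hat E_j'$ to repair any negative coefficients. One small clarification: in the paper the fibre $B$ does not survive into the final effective divisor with a positive coefficient as you suggest; rather, since the auxiliary curve ($\tilde L$, $M$, or $\tilde L_1$) through the tacnode is itself a fibre of $\phi$, the term $aB$ is absorbed into its coefficient (giving $a+2\varepsilon$, $a+\varepsilon$, or $a-2\varepsilon$ as appropriate), and this is why the paper needs both the ``$+\varepsilon$'' and ``$-\varepsilon$'' versions of the key numerical equivalences.
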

\begin{proof}
The surface $S$ has exactly  twelve pairs of $(-1)$-curves $\{C_i, C_i'\}$, $i=1,\ldots, 12$ such that each $C_i+C'_i$ is a tacnodal anticanonical divisor. Choose one tacnodal anticanonical divisor, say $C_1+C_1'$.  Since we have more than ten tacnodal anticanonical divisors, we may assume that 
\begin{itemize}
\item none of $E_1,\ldots, E_\ell$ are  $C_1$ or $C_1'$; 
\item none of $\hat{E}_1,\hat{E}_1'\ldots, \hat{E}_{6-\ell}, \hat{E}_{6-\ell}'$ are $C_1$ or $C_1'$;
\item neither $\phi_1(C_1)$ nor $\phi_1(C_1')$ is a $(-1)$-curve on $W$.
\end{itemize}

Each of $E_i$ and $\hat{E}_j$ intersects  exclusively either $C_1$ or $C_1'$ once. Note that if $\hat{E}_i$ intersects $C_1$, then $\hat{E}_i'$ meets $C_1'$ and  if $\hat{E}_i$ intersects $C_1'$, then $\hat{E}_i'$ meets $C_1$.
Let $m_1$ (resp. $m_1'$) be the number of $E_i$ with $E_i\cdot C_1=1$ (resp. $E_i\cdot C_1'=1$) and let $m_0$ (resp. $m_0'$) be the number of $\hat{E}_j$ with  $\hat{E}_j\cdot C_1=1$ (resp. $\hat{E}_j\cdot C_1'=1$).  Put $m=m_0+m_1$ and $m'=m_0'+m_1'$. 
We may assume that 
\begin{itemize}
\item $m\geqslant m'$;
\item if $m=m'$, then $m_1\geqslant m_1'$;
\item $E_1,\ldots, E_{m_1}$ intersect $C_1$ and $E_{m_1+1},\ldots, E_{\ell}$ intersect $C_1'$;
\item $\hat{E}_1,\ldots, \hat{E}_{m_0}$ intersect $C_1$ and $\hat{E}_{m_0+1},\ldots, \hat{E}_{6-\ell}$ intersect $C_1'$.
\end{itemize} Since $m+m'=6$ and neither $\phi_1(C_1)$ nor $\phi_1(C_1')$ is a $(-1)$-curve on $W$, we have three possibilities, $(5,1)$, $(4,2)$, $(3,3)$ for $(m, m')$.

Suppose that $(m, m')=(5,1)$ and $W\cong\mathbb{P}^1\times\mathbb{P}^1$.
Then $\phi_1(C_1)$ is a $4$-curve. We may assume that this is an irreducible curve of bidegree $(1,2)$. Then $\phi_1(C_1')$ is a curve of bidegree $(1,0)$. The curves $\phi_1(C_1)$ and $\phi_1(C_1')$ meets tangentially at a single point $Q$. Let $L$ be the curve of bidegree $(0,1)$ passing through $Q$. Note that $\phi_1(B)$ is a curve of bidegree $(0,1)$ since $B\cdot C_1=B\cdot C_1'=1$.
We immediately see from Example~\ref{example:non-Prokhorov example8-1} that for an arbitrary real number $\varepsilon$
\begin{equation}\label{equation:5-1}
-K_{S}+aB\equiv(1-\varepsilon)C_1+(1+\varepsilon )C_1'+ (a+2\varepsilon) \tilde{L}-\sum^{m_1}_{i=1}\varepsilon E_i-\sum^{m_0}_{i=1}\varepsilon\hat{E}_i+\sum^{\ell}_{i=m_1+1}\varepsilon E_i+\sum^{6-\ell}_{i=m_0+1}\varepsilon\hat{E}_i,
\end{equation}
where $\tilde{L}$ is the proper transform of $L$ by $\phi_1$. In a similar way, we obtain 
\begin{equation}\label{equation:5-2}
-K_{S}+aB\equiv(1+\varepsilon)C_1+(1-\varepsilon )C_1'+ (a-2\varepsilon) \tilde{L}+\sum^{m_1}_{i=1}\varepsilon E_i+\sum^{m_0}_{i=1}\varepsilon\hat{E}_i-\sum^{\ell}_{i=m_1+1}\varepsilon E_i-\sum^{6-\ell}_{i=m_0+1}\varepsilon\hat{E}_i.
\end{equation}
Example~\ref{example:non-Prokhorov example8-1} also shows that the complements of the supports of the right hand sides of \eqref{equation:5-1} and \eqref{equation:5-2} are cylinders
for a sufficiently small positive real number $\varepsilon$.

Suppose that $(m, m')=(4,2)$. Then $\phi_{1}(C_1)$ is a $3$-curve, and hence $W\cong\mathbb{F}_1$. There is a unique $0$-curve
$M$ on $S$ such that  $\phi_1(M)$ is the $0$-curve passing through the intersection point of $\phi_1(C_1)$ and  $\phi_1(C_1')$.
From Example~\ref{example:non-Prokhorov example8-3} we obtain 
\begin{equation}\label{equation:4-1}
-K_{S}+aB\equiv(1-\varepsilon)C_1+(1+\varepsilon )C_1'+ (a+\varepsilon) M -\sum^{m_1}_{i=1}\varepsilon E_i-\sum^{m_0}_{i=1}\varepsilon\hat{E}_i+\sum^{\ell}_{i=m_1+1}\varepsilon E_i+\sum^{6-\ell}_{i=m_0+1}\varepsilon\hat{E}_i\end{equation}
for an arbitrary real number $\varepsilon$. We can also obtain 
\begin{equation}\label{equation:4-2}
-K_{S}+aB\equiv(1+\varepsilon)C_1+(1-\varepsilon )C_1'+ (a-\varepsilon) M+\sum^{m_1}_{i=1}\varepsilon E_i+\sum^{m_0}_{i=1}\varepsilon\hat{E}_i-\sum^{\ell}_{i=m_1+1}\varepsilon E_i-\sum^{6-\ell}_{i=m_0+1}\varepsilon\hat{E}_i.\end{equation}
With a sufficiently small positive real number $\varepsilon$, these two divisors on the right hand sides define cylinders  (see Example~\ref{example:non-Prokhorov example8-3}).
 
Suppose that $(m, m')=(3,3)$. Then $\phi_{1}(C_1)$ and $\phi_{1}(C_1')$  are $2$-curves, and hence $W\cong\mathbb{P}^1\times\mathbb{P}^1$. Moreover, $\phi_1(C_1)$ and $\phi_1(C_1')$ are irreducible curves of bidegree $(1,1)$ intersecting tangentially at a single point $Q$. Let $L_1$ and $L_2$ be the curves of bidegrees $(1,0)$ and $(0,1)$, respectively, passing through $Q$.
The curve $\phi_1(B)$ is a curve of bidegree $(1,0)$ or $(0,1)$. Without loss of generality, we may assume that $\phi_1(B)$ is a curve of bidegree $(1,0)$.
Then for an arbitrary real number $\varepsilon$
\begin{equation}\label{equation:3-1}
-K_{S}+aB\equiv(1-2\varepsilon)C_1+(1+\varepsilon )C_1'+ (a+\varepsilon) \tilde{L}_1+ \varepsilon \tilde{L}_2-\sum^{m_1}_{i=1}2\varepsilon E_i-\sum^{m_0}_{i=1}2\varepsilon\hat{E}_i+\sum^{\ell}_{i=m_1+1}\varepsilon E_i+\sum^{6-\ell}_{i=m_0+1}\varepsilon\hat{E}_i,
\end{equation}
where $\tilde{L}_1$ and $\tilde{L}_2$ are the proper transforms of $L_1$ and $L_2$ by $\phi_1$.
In particular, the complement of the support of  the divisor on the right hand side is a cylinder
for a sufficiently small positive real number $\varepsilon$
(see Example~\ref{example:non-Prokhorov example8-0}).

Now we construct $H$-polar cylinders case by case, according to the length $\ell$.

\bigskip
\textbf{Case 1.}  $\ell=6$ and $W\cong \mathbb{P}^1\times\mathbb{P}^1$.
\medskip

If $m=5$, then we use \eqref{equation:5-1} to obtain
\[\mu H\equiv(1-\varepsilon)C_1+(1+\varepsilon )C_1'+ (a+2\varepsilon) \tilde{L}+(a_6+\varepsilon) E_6+
\sum^5_{i=1}(a_i-\varepsilon)E_i.\]
If $m=3$, then we apply \eqref{equation:3-1} to yield
 \[\mu H\equiv(1-2\varepsilon)C_1+(1+\varepsilon )C_1'+ (a+\varepsilon) \tilde{L}_1+ \varepsilon \tilde{L}_2+\sum^3_{i=1} (a_i-2\varepsilon)E_i +\sum^6_{i=4}(a_i+\varepsilon)E_i.\]
These show that $S$ has an $H$-polar cylinder.

\bigskip
\textbf{Case 2.}  $\ell=6$ and $W\cong \mathbb{F}^1$.
\medskip

Suppose that $m=5$. Then $\phi_1(C_1)$ is a $4$-curve and  $\phi_1(C_1')$ is a $0$-curve.  They meet tangentially at a single point. There is a $(-1)$-curve $E$ on $S$ such that $\phi_1(E)$ is the negative section of $W\cong \mathbb{F}^1$. The curve $\phi_1(B)$ is equivalent to $\phi_1(C_1')$.
Therefore, 
\[-K_{S}+aB\equiv(1-\varepsilon)C_1+(1+a+2\varepsilon )C_1'+ 2\varepsilon E +(a+2\varepsilon) E_6-\varepsilon\sum^5_{i=1} E_i,\]
and 
\[\mu H\equiv(1-\varepsilon)C_1+(1+a+2\varepsilon )C_1'+ 2\varepsilon E +(a_6+a+2\varepsilon) E_6+\sum^5_{i=1}(a_i-\varepsilon) E_i.\]

Suppose that $m=4$. Then we apply \eqref{equation:4-1} to obtain \[\mu H\equiv(1-\varepsilon)C_1+(1+\varepsilon )C_1'+ (a+\varepsilon) M +\sum^4_{i=1}(a_i-\varepsilon) E_i +\sum^6_{i=4}(a_i+\varepsilon)E_i.\]

The divisors on the right hand sides produce $H$-polar cylinders on $S$.

\bigskip

From now, we consider the cases where $\ell<6$. By contracting $\hat{E}_1'$ instead of $\hat{E}_1$, if necessary,
we may always assume that $W\cong \mathbb{P}^1\times\mathbb{P}^1$.

\bigskip
\textbf{Case 3.}  $\ell=5$.
\medskip

If $(m_1,m_0)=(5,
0)$, we apply  \eqref{equation:5-1} to yield  
\[\mu H\equiv(1-\varepsilon)C_1+(1+\varepsilon )C_1'+ (a+2\varepsilon) \tilde{L}+\varepsilon \hat{E}_1+
\sum^5_{i=1}(a_i-\varepsilon)E_i. \]
If $(m_1,m_0)=(4,1)$, then we use \eqref{equation:5-2} to obtain 
\[\mu H\equiv(1+\varepsilon)C_1+(1-\varepsilon )C_1'+ (a-2\varepsilon) \tilde{L}+\varepsilon \hat{E}_1+(a_5-\varepsilon) E_5+
\sum^4_{i=1}(a_i+\varepsilon)E_i. \]
If $(m_1,m_0)=(3,0)$, then \eqref{equation:3-1} shows
\[\mu H \equiv(1-2\varepsilon)C_1+(1+\varepsilon )C_1'+ (a+\varepsilon) \tilde{L}_1+ \varepsilon \tilde{L}_2+\varepsilon\hat{E}_1+\sum^{3}_{i=1}(a_i-2\varepsilon) E_i+\sum^{5}_{i=4}(a_i+\varepsilon) E_i.\]
In each case, the divisor on the right hand side produces an $H$-polar cylinder on $S$ 
with a sufficiently small positive real number $\varepsilon$.

\bigskip
\textbf{Case 4.}  $\ell=4$.
\medskip

Suppose that $(m_1, m_0)=(4,1)$. When we obtain the birational morphism $\phi_1$, we contract $\hat{E}_1'$ instead of $\hat{E}_1$. Then this new contraction maps $S$ onto $\mathbb{F}_1$. The curve $C_1$ meets $E_1,\ldots, E_4$ and the curve $C_1'$ intersects $\hat{E}_1'$ and $\hat{E}_2$. We apply \eqref{equation:4-1} to this new set-up. Then we obtain 
\[\mu H\equiv(1-\varepsilon)C_1+(1+\varepsilon )C_1'+ (a+\varepsilon) M +\varepsilon \hat{E}_1'+\varepsilon \hat{E}_2 +\sum^4_{i=1}(a_i-\varepsilon) E_i.\]

If $(m_1, m_0)=(3,2)$, then we apply \eqref{equation:5-2} to yield  
\[\mu H\equiv(1+\varepsilon)C_1+(1-\varepsilon )C_1'+ (a-2\varepsilon) \tilde{L}+(a_4-\varepsilon) E_4+\sum^2_{i=1}\varepsilon \hat{E}_i+
\sum^3_{i=1}(a_i+\varepsilon)E_i. \]

If $(m_1, m_0)=(3,0)$,  then use \eqref{equation:3-1}, and obtain
\[\mu H\equiv(1-2\varepsilon)C_1+(1+\varepsilon )C_1'+ (a+\varepsilon) \tilde{L}_1+ \varepsilon \tilde{L}_2+(a_4+\varepsilon ) E_4+\sum^{3}_{i=1}(a_i-2\varepsilon ) E_i+\sum^{2}_{i=1}\varepsilon\hat{E}_i.\]

If $(m_1,m_0)=(2,1)$, then we contract $\hat{E}_2'$ instead of $\hat{E}_2$ when we obtain the birational morphism $\phi_1$. 
This new contraction sends $S$ to $\mathbb{F}_1$. The curve $C_1$ meets $E_1, E_2, \hat{E}_1, \hat{E}_2' $ and the curve $C_1'$ intersects $E_2$ and $E_4$. Apply \eqref{equation:4-2} to the new contraction, and we obtain 
\[\mu H\equiv(1+\varepsilon)C_1+(1-\varepsilon )C_1'+ (a-\varepsilon) M +\varepsilon \hat{E}_1+\varepsilon \hat{E}_2' +\sum^2_{i=1}(a_i+\varepsilon) E_i+\sum^4_{i=3}(a_i-\varepsilon) E_i.\]

These four equivalences show that $S$ has an $H$-polar cylinder if $\ell=4$.

\bigskip
\textbf{Case 5.}  $\ell=3$.
\medskip

If $(m_1, m_0)=(3,2)$, then we contract $\hat{E}_1'$ and $\hat{E}_2'$ instead of $\hat{E}_1$ and $\hat{E}_2$ when we obtain the birational morphism $\phi_1$. Then new contraction maps $S$ to $\mathbb{P}^1\times\mathbb{P}^1$. Therefore, it is enough to consider the case $(m_1, m_0)=(3,0)$ below.

If $(m_1,m_0)=(2,3)$, then apply \eqref{equation:5-2} and we obtain
\[\mu H\equiv(1+\varepsilon)C_1+(1-\varepsilon )C_1'+ (a-2\varepsilon) \tilde{L}+(a_3-\varepsilon) E_3+\sum^{2}_{i=1}(a_i+\varepsilon) E_i+\sum^{3}_{i=1}\varepsilon\hat{E}_i.
\]

If $(m_1,m_0)=(3,0)$, then we use \eqref{equation:3-1} to get 
\[\mu H\equiv(1-2\varepsilon)C_1+(1+\varepsilon )C_1'+ (a+\varepsilon) \tilde{L}_1+ \varepsilon \tilde{L}_2+\sum^{3}_{i=1}(a_i-2\varepsilon) E_i+\sum^{3}_{i=1}\varepsilon\hat{E}_i.
\]

Suppose that $(m_1,m_0)=(2,1)$. Then we contract $\hat{E}_2'$ and $\hat{E}_3'$ instead of $\hat{E}_2$ and $\hat{E}_3$. This new contraction reduces this case to the case where $(m_1,m_0)=(2,3)$ above. 

Consequently, these two equivalences verify  that $S$ has an $H$-polar cylinder.
\end{proof}
\begin{theorem}\label{theorem:CL3-6}
If the ample $\mathbb{R}$-divisor $H$ is of type $C(7)$ with length $3\leqslant \ell\leqslant 6$, then $S$ contains an $H$-polar cylinder.
\end{theorem}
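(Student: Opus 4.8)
The plan is to reduce to Lemma~\ref{theorem:no-cuspidal}, which proves the statement whenever $|-K_S|$ contains no cuspidal rational curve. Thus the entire content of the proof is the complementary case, and from now on I assume $|-K_S|$ contains a cuspidal rational curve $C$ with cusp $P$. I retain the notation fixed before Lemma~\ref{theorem:no-cuspidal}: $K_S+\mu H\equiv aB+\sum_{i=1}^{\ell}a_iE_i$ with $a,a_i>0$ and $B$ an irreducible fiber of the conic bundle $\phi\colon S\to\mathbb{P}^1$, and $\Delta$ is also generated by $(6-\ell)$ further $(-1)$-curves $\hat E_1,\dots,\hat E_{6-\ell}$ lying in the reducible fibers of $\phi$. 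I first record a preliminary observation: every $(-1)$-curve $\Gamma$ that meets $C$ meets it at a single smooth point, because $\Gamma\cdot(-K_S)=1$ while $P\in\Gamma$ would force $\Gamma\cdot C\geqslant\mathrm{mult}_P(\Gamma)\,\mathrm{mult}_P(C)=2$. In particular none of the $E_i$ and $\hat E_j$ passes through $P$, the fiber of $\phi$ through $P$ is irreducible, and we may take it to be the curve $B$.

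For the main case I would argue according to the length $\ell\in\{3,4,5,6\}$, following the template of Case~2 of Theorem~\ref{theorem:BR3-7}. In each instance let $\phi_1\colon S\to W$ be the contraction of the $\ell$ disjoint $(-1)$-curves $E_1,\dots,E_\ell$; since these curves miss $P$ and lie in distinct reducible fibers, one checks that $W$ is a smooth del Pezzo surface of degree $2+\ell$, that $\phi_1(C)$ is a cuspidal rational curve in $|-K_W|$ with cusp $\phi_1(P)$, and that the conic bundle descends to $W$, so that $\phi_1(B)$ is the fiber through $\phi_1(P)$ --- hence one of the distinguished $0$-curves through the cusp occurring in the cuspidal-curve cylinders of Examples~\ref{example:non-Prokhorov example5-4}, \ref{example:non-Prokhorov example6-1}, and~\ref{example:non-Prokhorov example7-1} for $\ell=3,4,5$, and of Example~\ref{example:non-Prokhorov example8-2} for $\ell=6$ with $W\cong\mathbb{F}_1$. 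The one remaining sub-case, $\ell=6$ with $W\cong\mathbb{P}^1\times\mathbb{P}^1$, I would do directly: the cuspidal curve in $|-K_{\mathbb{P}^1\times\mathbb{P}^1}|$ together with the $(1,0)$- and $(0,1)$-fibers through its cusp has a cylinder as complement, as one sees by blowing up the cusp and contracting the two fibers to $\mathbb{P}^2$, where the configuration becomes a line tangent to a conic (Example~\ref{example:line-and-conic}). In every case, raising by $a$ the coefficient of $\phi_1(B)$ in the effective representative of $-K_W$ supplied by the quoted example turns its $(-K_W)$-polar cylinder into a $(-K_W+a\,\phi_1(B))$-polar cylinder.

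Finally I would transfer the cylinder back to $S$. From $K_S=\phi_1^*K_W+\sum_{i=1}^\ell E_i$ and $B=\phi_1^*\phi_1(B)$ one obtains $\mu H\equiv\phi_1^*\bigl(-K_W+a\,\phi_1(B)\bigr)+\sum_{i=1}^\ell(a_i-1)E_i$; pulling back the effective divisor of the previous paragraph and using that $\phi_1(C)$ passes through each point $\phi_1(E_i)$ with multiplicity one, this produces, for every sufficiently small $\varepsilon>0$, an effective $\mathbb{R}$-divisor numerically equivalent to $\mu H$ with coefficient $a_i-\varepsilon>0$ at each $E_i$ and with support equal to the $\phi_1$-preimage of the support of the divisor on $W$, whence $S$ minus this support is isomorphic to the cylinder on $W$. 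The main obstacle, and the reason Lemma~\ref{theorem:no-cuspidal} had to be isolated first, is precisely that the low-degree cuspidal-curve cylinders used here exist only because $C$ is cuspidal, so the argument cannot be run without a cuspidal anticanonical curve; once such a curve is available, the contraction of $E_1,\dots,E_\ell$ is clean exactly because these curves avoid the cusp, so there is no tacnodal pair and no two-branch incidence bookkeeping as in Lemma~\ref{theorem:no-cuspidal}, and the zero-length curves $\hat E_j$ need not appear in the polar divisor at all, each meeting $C$ and hence not being wholly contained in the cylinder. What remains is routine: checking that all coefficients stay positive after the $\varepsilon$-perturbation, and suppressing, as the paper does elsewhere, the degenerate configurations in which an auxiliary curve passes through one of the blown-up points.
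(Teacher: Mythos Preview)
Your argument is correct and follows essentially the same route as the paper's proof. Both reduce to Lemma~\ref{theorem:no-cuspidal} and then, in the cuspidal case, build the polar divisor from the cuspidal anticanonical curve $C$ together with the $0$-curves (and occasionally $1$-curves) through the cusp $P$, invoking Examples~\ref{example:non-Prokhorov example5-4}, \ref{example:non-Prokhorov example6-1}, \ref{example:non-Prokhorov example7-1}, \ref{example:non-Prokhorov example8-2} according to $\ell$, and treating $\ell=6$ with $W\cong\mathbb{P}^1\times\mathbb{P}^1$ by the conic-and-tangent-line picture. The one organisational difference is that you first observe the fibre of $\phi$ through $P$ is irreducible and \emph{choose} $B$ to be that fibre, so that on $W$ the curve $\phi_1(B)$ is literally one of the distinguished $0$-curves through the cusp; the paper instead lets $B$ be arbitrary and uses the linear equivalence $B\equiv F_j$ to absorb the $aB$ term into the coefficient of the appropriate $F_j$. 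Your choice is legitimate and mildly streamlines the bookkeeping; otherwise the two proofs coincide. (A cosmetic point: for $\ell=3,4$ the coefficient of $C$ in the quoted examples is $1-2\varepsilon$, so the resulting coefficient of $E_i$ is $a_i-2\varepsilon$ rather than $a_i-\varepsilon$; this does not affect the argument.)
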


\begin{proof}
Due to Lemma~\ref{theorem:no-cuspidal}, we may assume that there exists a cuspidal rational curve  $C$ in $|-K_S|$. Let $P$ be the point at which the curve $C$ has the cusp. Each $E_i$ intersects the curve $C$ at a single smooth point. 

We construct $H$-polar cylinders case by case, according to the length $\ell$.

\bigskip
\textbf{Case 1.}  $\ell=6$ and $W\cong \mathbb{P}^1\times\mathbb{P}^1$.
\medskip

There are   two $0$-curves $F_1, F_2$ passing through the point $P$ and not meeting any of $E_i$'s. 
Note that the curve $B$ must intersect one of the $0$-curves $F_1$, $F_2$. We may assume that it intersects $F_1$. Then $B\equiv F_2$.
We see that
\[-K_{S}\equiv (1-\varepsilon)C+2\varepsilon F_1+(a+2\varepsilon)F_2-aB-\varepsilon\sum^6_{i=1}E_i, \]
and hence 
\[\mu H\equiv
(1-\varepsilon)C+2\varepsilon F_1+(a+2\varepsilon)F_2+\sum^6_{i=1}(a_i-\varepsilon)E_i.\]

Let $\pi:\tilde{S}\to S$ be the blow up at the point $P$ and let $E$ be the exceptional curve of $\pi$. Then $\tilde{S}$ is a weak del Pezzo surface of degree $1$ and it has exactly one $(-2)$-curve, the proper transform $\tilde{C}$ of $C$. Denote
 the proper transforms on $\tilde{S}$ of the curves
$E_1,\ldots, E_6$, $F_1$, $F_2$ by $\tilde{E}_1,\ldots, \tilde{E}_6$, $\tilde{F}_1$, $\tilde{F}_2$. 
Contracting the $(-1)$-curves $\tilde{E}_1,\ldots, \tilde{E}_6$, $\tilde{F}_1$, $\tilde{F}_2$, 
we obtain a birational morphism $\psi:\tilde{S}\to \mathbb{P}^2$.

For a sufficiently small positive real number $\varepsilon$, the divisor above defines an $H$-polar cylinder since
$$S\setminus (C \cup F_1\cup F_2\cup E_1\cup\ldots\cup E_6)\cong \mathbb{P}^2\setminus (\psi(\tilde{C})\cup \psi(E)),$$ 
where $\psi(\tilde{C})$ and $\psi(E)$ are a conic and a line meeting tangentially at a single point on $\mathbb{P}^2$.

 \bigskip
 \textbf{Case 2.} $\ell=6$ and $W\cong \mathbb{F}_1$.  
 \medskip

 In this case, we have only one  $0$-curve $F$ passing through the point $P$ and not intersecting any of $E_i$'s. Instead, we consider the Zariski tangent line $M$ to $C$ at the point $P$. This is a $1$-curve. 
Note that the curve $\phi_1(B)$ is a $0$-curve and it intersects the unique $(-1)$-curve on $\mathbb{F}_1$.
We have
\[-K_{S}\equiv \left(1-\varepsilon\right)C+2\varepsilon M+(a+\varepsilon)F-aB-\varepsilon\sum^6_{i=1}E_i, \]
and hence 
\[\mu H\equiv
\left(1-\varepsilon\right)C+2\varepsilon M+(a+\varepsilon)F +\sum^6_{i=1}\left(a_i-\varepsilon\right)E_i.\]
For a sufficiently small positive real number $\varepsilon$, this defines an $H$-polar cylinder (see Example~\ref{example:non-Prokhorov example8-2}).

\bigskip
 \textbf{Case 3.} $\ell=5$.  
 \medskip
 
There are two $0$-curves $L_1$, $L_2$ such that \begin{itemize}
\item they pass through $P$;
\item they do not meet  each other outside $P$;
\item they are disjoint from   the curves $E_1,\ldots, E_5$.
\end{itemize}
 In addition, there is a unique $1$-curve 
$T$ that intersects $C$ only at $P$ and that does not meet any of $E_i$'s (see Example~\ref{example:non-Prokhorov example7-1}).
Note that the curve $B$ is a $0$-curve. It may be assumed to  intersect  $L_1$ but not $L_2$.
Then
\[-K_{S}\equiv (1-\varepsilon)C+\varepsilon L_1 +(a+\varepsilon)L_2 +\varepsilon T-aB-\varepsilon\sum^5_{i=1}E_i, \]
and hence 
\[\mu H\equiv
(1-\varepsilon)C+\varepsilon L_1 +(a+\varepsilon) L_2 +\varepsilon T+\sum^5_{i=1}(a_i-\varepsilon)E_i.\]
For a sufficiently small positive real number $\varepsilon$, this defines an $H$-polar cylinder
(see Example~\ref{example:non-Prokhorov example7-1}).

\bigskip
 \textbf{Case 4.} $\ell=4$.  
 \medskip

The surface $S$ has three  $0$-curves $F_1, F_2, F_3$ such that \begin{itemize}
\item they pass through $P$;
\item they do not intersect  each other outside $P$;
\item they are disjoint from   the curves $E_1$, $E_2$, $E_3$, $E_4$.
\end{itemize}
 In addition, it has two $1$-curves $G_1, G_2$
 such that \begin{itemize}
\item they meet $C$ only at $P$;
\item they do not intersect   each other outside $P$;
\item they are disjoint from   the curves $E_1$, $E_2$, $E_3$, $E_4$
\end{itemize}
(see Example~\ref{example:non-Prokhorov example6-1}).
The curve $B$ is a $0$-curve. We may assume that it intersects  $F_1$, $F_2$ but not $F_3$.
Then
\[-K_{S}\equiv(1-2\varepsilon)C+
\varepsilon(F_1+F_2)+(a+\varepsilon) F_3+
\varepsilon(G_1+G_2)-aB
-2\varepsilon\sum_{i=1}^4E_i. \]
Therefore, 
\[\mu H\equiv
(1-2\varepsilon)C+
\varepsilon(F_1+F_2)+(a+\varepsilon)F_3+
\varepsilon(G_1+G_2)+
\sum_{i=1}^4(a_i-2\varepsilon)E_i.\]
For a sufficiently small positive real number $\varepsilon$, this defines an $H$-polar cylinder as shown in  Example~\ref{example:non-Prokhorov example6-1}.

\bigskip
 \textbf{Case 5.} $\ell=3$.  
 \medskip

In this case, the surface $S$ has five  $0$-curves $F_1,\ldots, F_5$ such that
\begin{itemize}
\item they pass through $P$;
\item they do not intersect  each other outside $P$;
\item they are disjoint from   the curves $E_1$, $E_2$,  $E_3$
\end{itemize}
(see Example~\ref{example:non-Prokhorov example5-4}).
 The curve $B$ is a $0$-curve. We may assume that it meets  $F_1$, $F_2$, $F_3$, $F_4$ but not $F_5$.
Then \[-K_{S}\equiv
 (1-2\varepsilon)C+\varepsilon\sum_{i=1}^4F_i +(a+\varepsilon)F_5-aB-2\varepsilon(E_1+E_2+E_3).\]
Therefore, 
\[\mu H\equiv
 (1-2\varepsilon)C+\varepsilon\sum_{i=1}^4F_i +(a+\varepsilon)F_5+(a_1-2\varepsilon)E_1+(a_2-2\varepsilon)E_2+(a_3-2\varepsilon)E_3.\]
For a sufficiently small positive real number $\varepsilon$, this defines an $H$-polar cylinder.
 \end{proof}

\begin{theorem}\label{theorem:CL0-2}
If $H$ is of type $C(7)$ with  $a>\frac{10}{3}$, then  $S$ contains an $H$-polar cylinder.
\end{theorem}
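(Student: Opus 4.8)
The plan is to reduce to Theorem~\ref{theorem:CL3-6} and then treat the missing lengths directly. Since $H$ is of type $C(7)$ its length $\ell$ lies in $\{0,1,\dots,6\}$, and for $3\le\ell\le 6$ the conclusion is already Theorem~\ref{theorem:CL3-6}; so it suffices to handle $\ell\in\{0,1,2\}$. As in the paragraph preceding the statement, I would contract the disjoint $(-1)$-curves $E_1,\dots,E_\ell,\hat E_1,\dots,\hat E_{6-\ell}$ lying in the six reducible fibres of $\phi$ to a birational morphism $\phi_1\colon S\to W$ onto a smooth del Pezzo surface of degree $8$, with $\bar\phi\colon W\to\mathbb P^1$ the induced ruling; because $\ell<6$ we may (after replacing some $\hat E_j$ by $\hat E_j'$, exactly as in the proof of Lemma~\ref{theorem:no-cuspidal}) assume $W\cong\mathbb P^1\times\mathbb P^1$. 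The six contracted points then lie in six distinct fibres of $\bar\phi$, and one has $\phi_1^*(-K_W)=-K_S+\sum_{\alpha=1}^{6}\mathcal G_\alpha$ and $B=\phi_1^*F$ for a fibre $F$ of $\bar\phi$, where $\mathcal G_1,\dots,\mathcal G_6$ are the exceptional curves; hence $\mu H\equiv\phi_1^*D_W+\sum_\alpha(c_\alpha-1)\mathcal G_\alpha$, where $D_W$ has bidegree $(2+a,2)$, each $c_\alpha\in[0,1)$, and exactly $\ell$ of the $c_\alpha$ are the numbers $a_i$.

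Next I would split according to whether $|-K_S|$ contains a cuspidal rational curve. If it does not, then, as recalled before Theorem~\ref{theorem:BR3-7}, $S$ carries twelve tacnodal anticanonical divisors $C_1+C_1'$ (two $(-1)$-curves meeting tangentially), and the argument of Lemma~\ref{theorem:no-cuspidal} goes through essentially verbatim: one picks $C_1+C_1'$ disjoint from all the vertical curves involved, so that $\phi_1(C_1),\phi_1(C_1')$ become two tangent curves on $W$ whose complement is a cylinder (Example~\ref{example:non-Prokhorov example8-0}--\ref{example:non-Prokhorov example8-3}), and the identities \eqref{equation:5-1}--\eqref{equation:3-1} express $\mu H$ as a positive combination of $C_1$, $C_1'$, one or two fibre components, and the vertical exceptional curves. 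Whenever one of those exceptional curves would be forced to have a negative coefficient — which can happen when $\ell$ is small — one simply deletes its companion component as well, i.e. replaces it by a full fibre of $\phi$; this only decreases the coefficient of the fibre class $B$, which stays positive because $a>0$. So the non-cuspidal case imposes no lower bound on $a$.

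The remaining and hardest case is when there is a cuspidal curve $C\in|-K_S|$ with cusp $P$; here $C\cdot B=2$ and the $E_i$ meet $C$ at smooth points. Following Examples~\ref{example:cusp} and \ref{example:non-Prokhorov example5-3}--\ref{example:non-Prokhorov example5-4}, I would blow up $P$ and a further (infinitely near) point, then contract suitable $(-1)$-curves to reach a model in which the proper transform of $C$ becomes a section whose class carries the ``triple'' produced by the cuspidal tangent (the ``$\pi(E_1)+3\pi(E_2)$'' of Example~\ref{example:cusp}). One then writes
$$\mu H\equiv(1-3\varepsilon)\,\widetilde C+\varepsilon\,(\widetilde T_0+\dots+\widetilde T_k)+\beta\,B_0+\sum_{i=1}^{\ell}(a_i\pm\varepsilon)\,E_i,$$
where $B_0$ is a general fibre of $\phi$, the $\widetilde T_j$ are the transforms of the cuspidal tangent and of the auxiliary tangent conics, and $\beta$ is a coefficient depending on $a$ and $\varepsilon$; the complement of the support is a cylinder by the same reasoning as $\mathbb P^2\setminus(C\cup T)\cong\mathbb A^1\times\mathbb A^1_*$ in Example~\ref{example:cusp}. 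The coefficient of $\widetilde C$ forces $\varepsilon<\tfrac13$, the coefficients $a_i\pm\varepsilon$ are positive for small $\varepsilon$ since $0<a_i<1$, and the coefficient $\beta$ of the fibre class has the shape $\beta=a-3+(\text{a quantity }\le 3\text{ that tends to }0\text{ with }\varepsilon)$; requiring $\beta\ge 0$ together with $\varepsilon<\tfrac13$ is precisely what produces the hypothesis $a>\tfrac{10}{3}$.

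The step I expect to be the genuine obstacle is this last one: arranging the cuspidal curve, the auxiliary tangent curves, and the choice of which fibre components to delete so that everything is compatible with the given conic bundle $\phi$, and then tracking the coefficients carefully enough that the threshold comes out to be exactly $\tfrac{10}{3}$ — the naive construction (a bidegree $(3,1)$ section of $\bar\phi$ through all six points, together with fibres) already gives a cylinder, but only under the cruder bound $a>4$, so the sharpness of $\tfrac{10}{3}$ is exactly where the cuspidal geometry must be exploited.
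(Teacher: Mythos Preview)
Your proposal does not actually prove the theorem: you yourself flag the cuspidal case as ``the genuine obstacle'' and concede that your construction, as stated, only yields the cruder bound $a>4$. More importantly, the plan is aimed in the wrong direction. The threshold $\tfrac{10}{3}$ has nothing to do with cuspidal geometry, and the paper does \emph{not} split into cuspidal and non-cuspidal cases here.

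The paper's argument is completely elementary and makes no use of tacnodal or cuspidal anticanonical divisors. Relabelling all six contracted $(-1)$-curves as $E_1,\dots,E_6$ (so some of the $a_i$ may be zero), set $P_i=\phi_1(E_i)$. When $W\cong\mathbb{P}^1\times\mathbb{P}^1$ with $\phi_1(B)$ of bidegree $(0,1)$, take the $(1,0)$-curve $C$ through $P_1$ and the $(0,1)$-fibres $F_i$ through $P_i$. From $-K_W\equiv 2C+\tfrac{1}{3}\sum_{i=1}^6 F_i$ and $B\equiv\tilde F_i+E_i$ one gets
\[
-K_S+aB\ \equiv\ 2\tilde C+\tfrac{1}{3}\tilde F_1+\tfrac{4}{3}E_1+\Bigl(\tfrac{a}{5}+\tfrac{1}{3}\Bigr)\sum_{i=2}^6\tilde F_i+\Bigl(\tfrac{a}{5}-\tfrac{2}{3}\Bigr)\sum_{i=2}^6 E_i,
\]
whose support is a section plus fibres as in Example~\ref{example:P1-bundle}. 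Adding the non-negative $\sum a_iE_i$ then gives $\mu H$, and every coefficient is positive precisely when $\tfrac{a}{5}>\tfrac{2}{3}$, i.e.\ $a>\tfrac{10}{3}$. The $\mathbb{F}_1$ case is the same with $-K_{\mathbb{F}_1}\equiv 2C+\tfrac{1}{2}\sum F_i$ and gives the weaker constraint $a>3$. So the constant $\tfrac{10}{3}$ is pure arithmetic coming from distributing $aB$ over five reducible fibres, not from any tangency argument.

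Your ``naive construction'' (a section plus fibres) is in fact the right idea; what you were missing is that one fibre can be absorbed into the section (the $(1,0)$-curve $C$ passes through $P_1$, giving $E_1$ the large coefficient $\tfrac{4}{3}$), so only five of the $aB$-copies are spent on the remaining $E_i$'s rather than six. That is exactly what improves $a>4$ to $a>\tfrac{10}{3}$.
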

\begin{proof}
Put $\phi_1(E_i)=P_i$  for $ i=1,\ldots, 6$.

Suppose
that $W\cong \mathbb{P}^1\times \mathbb{P}^1$. We may assume that $\phi_1(B)$ is a curve of bidegree $(0,1)$. Let $C$ be the curve of bidegree $(1,0)$ passing through the point $P_1$.
Let $F_i$ be the curve of bidegree $(0,1)$ passing through the point $P_i$ for $ i=1,\ldots, 6$.
We have
\[-K_{\mathbb{P}^1\times\mathbb{P}^1}\equiv 2C+\frac{1}{3}\sum_{i=1}^6F_i.\]
We then obtain
\[-K_{S}\equiv 2\tilde{C}+\frac{1}{3}\tilde{F}_1+\frac{4}{3}E_1+\frac{1}{3}\sum_{i=2}^6\tilde{F}_i-\frac{2}{3}\sum_{i=2}^6E_i,\]
where $\tilde{C}$ and $\tilde{F}_i$'s are the proper transforms of $C$ and $F_i$'s by $\phi_1$, respectively.
Since $B\equiv \tilde{F}_i+E_i$ for each $i$, we have
\[-K_{S}+aB\equiv 2\tilde{C}+\frac{1}{3}\tilde{F}_1+\frac{4}{3}E_1+\left(\frac{a}{5}+\frac{1}{3}\right)\sum_{i=2}^6\tilde{F}_i
+\left(\frac{a}{5}-\frac{2}{3}\right)\sum_{i=2}^6E_i.\]
Since $\frac{a}{5}-\frac{2}{3}>0$, Example~\ref{example:P1-bundle} shows that $S$ has an $H$-polar cylinder.

We now suppose that $W\cong\mathbb{F}_1$. Let $C$ be the negative section of $\mathbb{F}_1$. Note that $C$ cannot pass through any of the points $P_i$. Take the fiber  $F_i$ of the $\mathbb{P}^1$-bundle morphism of $\mathbb{F}_1$ to $\mathbb{P}^1$ that passes through the point $P_i$ for each $i$. We have
\[-K_{\mathbb{F}_1}\equiv 2C+\frac{1}{2}\sum_{i=1}^6F_i.\]
We then obtain
\[-K_{S}\equiv 2\tilde{C}+\frac{1}{2}\sum_{i=1}^6(\tilde{F}_i-E_i),\]
where $\tilde{C}$ and $\tilde{F}_i$'s are the proper transforms of $C$ and $F_i$ by $\phi_1$, respectively.
Since $B\equiv \tilde{F}_i+E_i$ for each $i$, we have
\[-K_{S}+aB\equiv 2\tilde{C}+\left(\frac{a}{6}+\frac{1}{2}\right)\sum_{i=1}^6\tilde{F}_i
+\left(\frac{a}{6}-\frac{1}{2}\right)\sum_{i=1}^6E_i.\]
Since $a>3$,  Example~\ref{example:lines} verifies that $S$ has an $H$-polar cylinder.
\end{proof}

Theorems~\ref{theorem:BR3-7}, \ref{theorem:BR2}, \ref{theorem:CL3-6} and  \ref{theorem:CL0-2} imply 
(1), (2), (3), and (4) in Theorem~ \ref{theorem:main-hard}, respectively.

\subsection{Cylinders in del Pezzo surfaces of degree $1$}
\label{section:cylinders-degree-1}

In order to prove Theorem~\ref{theorem:main-hard-1}, let
$S$ be a smooth del Pezzo surface of degree $1$ and let $H$ be an ample
$\mathbb{R}$-divisor on~$S$. 
We use the same notations as those at the beginning of Subsection~\ref{section:cylinders-degree-2}.

Again we first consider ample $\mathbb{R}$-divisors of type $B(r)$. Let $E_1,\ldots, E_r$ be the $r$ disjoint $(-1)$-curves that generate the face $\Delta$.
We may then write 
$$
K_{S}+\mu H\equiv \sum^{r}_{i=1}a_iE_i,
$$
for some positive real numbers $a_1, \ldots, a_r$. 
We may assume that $a_1\geqslant\ldots\geqslant a_r$.

\begin{proposition}
\label{proposition:deg-1-cylinders-birational} Suppose that
$r\geqslant 3$,  $2a_1+2a_2+a_3>4$, the
contraction $\phi$ is a birational, and
$Z\not\cong\mathbb{P}^1\times\mathbb{P}^1$. Then $S$ contains an
$H$-polar cylinder.
\end{proposition}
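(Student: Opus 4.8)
The plan is to imitate the strategy of Lemma~\ref{lemma:deg-3-cylinders-birational-1} and of the ``$2a_1+a_2>2$'' case of Theorem~\ref{theorem:BR2}, but with one extra degree of freedom so as to absorb three exceptional curves at once. Since $\phi$ is birational and $Z\not\cong\mathbb{P}^1\times\mathbb{P}^1$, the surface $Z$ is a smooth del Pezzo surface of degree $1+r\geqslant 4$, hence either $\mathbb{P}^2$ or an iterated blow-up of $\mathbb{P}^2$; composing $\phi$ with a blow-down $\psi\colon Z\to\mathbb{P}^2$ gives $\sigma=\psi\circ\phi\colon S\to\mathbb{P}^2$, the blow-up of $8$ points $P_1,\dots,P_8$ in general position, where $E_1,\dots,E_r$ are the $\sigma$-exceptional curves over $P_1,\dots,P_r$ (recall $r\geqslant 3$, so $E_1,E_2,E_3$ are among them). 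Put $P_i=\sigma(E_i)$.

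First I would fix the curves on $\mathbb{P}^2$. Let $C_1$ be the unique conic through $P_4,\dots,P_8$; by general position it is irreducible and avoids $P_1,P_2,P_3$. Let $L$ be one of the two lines through $P_3$ tangent to $C_1$, touching it at a point $Q$, and let $C_2$ (resp.\ $C_3$) be the member of the pencil $\langle C_1,L^2\rangle$ of conics passing through $P_1$ (resp.\ $P_2$); these exist because $P_1,P_2\neq Q$. Every member $C_t$ of this pencil is tangent to $L$ at $Q$ and meets $C_1$ only at $Q$, and the pencil is a morphism away from $Q$; hence a general member satisfies $C_t\setminus\{Q\}\cong\mathbb{A}^1$ and removing the (at most four) members $C_1,C_2,C_3,L^2$ exhibits
\[
\mathbb{P}^2\setminus(C_1\cup C_2\cup C_3\cup L)
\]
as an $\mathbb{A}^1$-bundle over $\mathbb{P}^1$ minus finitely many points, i.e.\ as a cylinder (cf.\ the reductions in Examples~\ref{example:lines} and~\ref{example:line-and-conic}). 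As usual, configurations in which $Q$ lands on one of $P_4,\dots,P_8$, or in which $C_2$ or $C_3$ degenerates, only raise the relevant multiplicities and are omitted.

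Next I would choose coefficients. Since $2a_1+2a_2+a_3>4$ while $a_1,a_2,a_3<1$ (because $H\cdot E_i>0$), pick $\varepsilon>0$ so small that $2a_1+2a_2+a_3>4+5\varepsilon$, and set
\[
\beta=1-a_1+\varepsilon,\qquad \gamma=1-a_2+\varepsilon,\qquad \delta=1-a_3+\varepsilon,\qquad \alpha=\tfrac12\bigl(2a_1+2a_2+a_3-2-5\varepsilon\bigr).
\]
Then $\alpha,\beta,\gamma,\delta>0$, $\alpha>1$, and $2\alpha+2\beta+2\gamma+\delta=3$, so $\alpha C_1+\beta C_2+\gamma C_3+\delta L$ is effective and numerically equivalent to $-K_{\mathbb{P}^2}$. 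Pulling back by $\sigma$, using $\mult_{P_1}=\beta$, $\mult_{P_2}=\gamma$, $\mult_{P_3}=\delta$ and $\mult_{P_i}=\alpha$ for $i\geqslant 4$, and then adding $\sum_{i=1}^r a_iE_i=\mu H+K_S$, one obtains
\[
\mu H\equiv \alpha\tilde C_1+\beta\tilde C_2+\gamma\tilde C_3+\delta\tilde L+\varepsilon(E_1+E_2+E_3)+\sum_{i=4}^{r}(a_i+\alpha-1)E_i+(\alpha-1)\!\!\sum_{i=r+1}^{8}\!\!E_i .
\]
Every coefficient is positive (here is exactly where $2a_1+2a_2+a_3>4$ and $r\geqslant3$ are used, and where one notes that the line — the ``cheapest'' curve — must be attached to the smallest of $a_1,a_2,a_3$), so this is an effective $\mathbb{R}$-divisor $D\equiv\mu H$ with $\mathrm{Supp}(D)=\sigma^{-1}(C_1\cup C_2\cup C_3\cup L)$, since $P_1,\dots,P_8$ all lie on $C_1\cup C_2\cup C_3\cup L$. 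Therefore $S\setminus\mathrm{Supp}(D)\cong\mathbb{P}^2\setminus(C_1\cup C_2\cup C_3\cup L)$ is the cylinder above, and $\tfrac1\mu D$ witnesses an $H$-polar cylinder.

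The one genuinely non-routine point I expect is the cylinder claim for $\mathbb{P}^2\setminus(C_1\cup C_2\cup C_3\cup L)$: one must resolve the length-four base point of the pencil $\langle C_1,L^2\rangle$, producing a conic-bundle structure, and check that deleting all exceptional curves together with the four special fibres leaves a trivial $\mathbb{A}^1$-bundle over a punctured rational curve — best handled either by a direct analysis of that pencil or by transforming the picture, via a short sequence of blow-ups and blow-downs, into the situation of Example~\ref{example:lines} or Example~\ref{example:line-and-conic}. The arithmetic with $\alpha,\beta,\gamma,\delta$, the special-position cases, and the reduction $S\to\mathbb{P}^2$ are all straightforward.
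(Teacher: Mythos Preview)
Your proposal is correct and follows essentially the same route as the paper: the same blow-down $\sigma\colon S\to\mathbb{P}^2$, the same configuration (the conic through $P_4,\dots,P_8$, the tangent line through $P_3$, and two auxiliary conics from the pencil $\langle C_1,L^2\rangle$ through $P_1$ and $P_2$), and the same cylinder $\mathbb{P}^2\setminus(C_1\cup C_2\cup C_3\cup L)$. The only difference is bookkeeping: the paper parameterises the coefficients so that $E_1,E_2$ receive $\tfrac12\varepsilon$ and $E_3$ receives $2a_1+2a_2+a_3-4-8\varepsilon$, whereas you solve directly for $\beta,\gamma,\delta$ to give each of $E_1,E_2,E_3$ the coefficient $\varepsilon$ and push the dependence on $2a_1+2a_2+a_3-4$ into $\alpha-1$; both choices are equivalent and your observation that the line must be routed through $P_3$ (the index with the smallest $a_i$) is exactly why the hypothesis reads $2a_1+2a_2+a_3>4$ under the ordering $a_1\geqslant a_2\geqslant a_3$.
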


\begin{proof}

Note that $Z$ is a smooth del Pezzo surface of degree $r+1$.
Moreover, by our assumption
$Z\not\cong\mathbb{P}^1\times\mathbb{P}^1$. Thus, either
$Z=\mathbb{P}^2$ or $Z$ is a blow up of $\mathbb{P}^2$ at $(8-r)$
points in general position. For both the cases, let $\psi\colon Z\to\mathbb{P}^2$
be the blow up. If $8-r>0$, denote the
proper transforms of these $\psi$-exceptional curves on $S$ by
$E_{r+1},\ldots,E_8$. Put $P_i=\sigma(E_i)$ and
$\sigma=\psi\circ\phi$.

Let $C$ be the conic in $\mathbb{P}^2$ passing through the
points $P_4,\ldots,P_{8}$. Let $L$ be a line passing through the point $P_3$ and tangent to the conic $C$ and
let $Q$ be the intersection point of the line $L$ and the conic $C$. For $i=1,2$ let $C_i$ be the conic passing through the point $P_i$ and intersecting $C$ only at the point $Q$.

The open subset 
$U=\mathbb{P}^2\setminus (L\cup C\cup C_1\cup C_2)$ is a cylinder.

We claim that the cylinder $U^\prime:=\sigma^{-1}(U)\simeq U$ is
$H$-polar.
Indeed, for a real number  $\varepsilon $ we have
$$-K_{\mathbb{P}^2}\equiv
(1+3\varepsilon) C+\left(\alpha_1-\varepsilon\right) C_1+\left(\alpha_2-\varepsilon\right) C_2 
+\left(\alpha_3-2\varepsilon\right)L,$$ 
where $\alpha_1, \alpha_2, \alpha_3 >0$ and $2\alpha_1+2\alpha_2+\alpha_3=1$.
Hence,
\[\begin{split}
-K_{S}&\sim\sigma^*(-K_{\mathbb{P}^2})-\sum_{i=1}^{8}
E_i\\
&\equiv
(1+3\varepsilon) \tilde{C}+\left(\alpha_1-\varepsilon\right) \tilde{C}_1+\left(\alpha_2-\varepsilon\right) \tilde{C}_2 
+\left(\alpha_3-2\varepsilon\right)\tilde{L}+ \\ &+\left(\alpha_1-\varepsilon-1\right)E_1+\left(\alpha_2-\varepsilon-1\right)E_2+\left(\alpha_3-2\varepsilon-1\right)E_3+
3\varepsilon\sum_{i=4}^{8} E_i,\\
\end{split}
\]
where $\tilde{L}$, $\tilde{C}$ and $\tilde{C}_j$ are the proper transforms 
of the line $L$ and the conics  $C$, $C_j$, respectively. We then obtain
\[\begin{split}
\mu H &\equiv
(1+3\varepsilon) \tilde{C}+\left(\alpha_1-\varepsilon\right) \tilde{C}_1+\left(\alpha_2-\varepsilon\right) \tilde{C}_2 
+\left(\alpha_3-2\varepsilon\right)\tilde{L}+ \\ &+\left(\alpha_1+a_1-\varepsilon-1\right)E_1+\left(\alpha_2+a_2-\varepsilon-1\right)E_2+\left(\alpha_3+a_3-2\varepsilon-1\right)E_3+
\sum_{i=4}^{8}(3\varepsilon+a_i) E_i,\\
\end{split}
\]
where $a_i=0$ if $i>r$. 
Put $\alpha_1=\frac{3}{2}\varepsilon+1-a_1$, $\alpha_2=\frac{3}{2}\varepsilon+1-a_2$ and $\alpha_3=2a_1+2a_2-6\varepsilon-3$. Since $2a_1+2a_2+a_3>4$, for a sufficiently small positive real number $\varepsilon$, all the coefficients in the divisor above are positive. This proves our claim.
\end{proof}

We now suppose that the morphism $\phi:S\to Z$ is a conic bundle, i.e., $Z=\mathbb{P}^1$. 
We may write
$$
K_{S}+\mu H\equiv aB+\sum_{i=1}^{7}a_iE_i
$$
where $B$ is an irreducible fiber of $\phi$, $E_i$'s are disjoint $(-1)$-curves in fibers of $\phi$, $a$ is a positive real number, and $a_i$'s are non-negative real numbers. 

\begin{proposition}\label{proposition:deg-1-cylinders-conic}
If $H$ is  of type $C(8)$ with $a>\frac{30}{7} $,
then $S$ has $H$-polar cylinders. \end{proposition}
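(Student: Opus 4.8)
The plan is to follow the proof of Theorem~\ref{theorem:CL0-2} almost verbatim, one degree lower. Recall the set-up: $\phi\colon S\to\mathbb{P}^1$ is a conic bundle, $K_S+\mu H\equiv aB+\sum_{i=1}^{7}a_iE_i$ with $B$ an irreducible fibre of $\phi$, the curves $E_1,\dots,E_7$ disjoint $(-1)$-curves contained in fibres of $\phi$, and $a_i\geqslant 0$. Since $S$ is a del Pezzo surface of degree $1$, it has exactly seven reducible fibres, and as the $E_i$ are disjoint they lie in distinct ones; write the $i$-th reducible fibre as $E_i+E_i'$, so $B\equiv E_i+E_i'$ for every $i$. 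First I would contract $E_1,\dots,E_7$ by a birational morphism $\phi_1\colon S\to W$. Then $W$ is a smooth del Pezzo surface (by Nakai--Moishezon, since $(-K_W)^2=K_S^2+7=8>0$ and $-K_W\cdot\Gamma\geqslant -K_S\cdot\widetilde{\Gamma}>0$ for every irreducible curve $\Gamma\subset W$) of degree $8$, hence $W\cong\mathbb{P}^1\times\mathbb{P}^1$ or $W\cong\mathbb{F}_1$; moreover $\phi$ descends to a $\mathbb{P}^1$-bundle $\bar\phi\colon W\to\mathbb{P}^1$ of which $\phi_1(B)$ is a fibre, and the points $P_i:=\phi_1(E_i)$ lie on seven distinct fibres of $\bar\phi$, the proper transform on $S$ of the fibre through $P_i$ being $\tilde F_i=E_i'$.

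In the case $W\cong\mathbb{P}^1\times\mathbb{P}^1$ I would arrange $\phi_1(B)$ to be of bidegree $(0,1)$, let $C$ be the curve of bidegree $(1,0)$ through $P_1$ and $F_i$ the curve of bidegree $(0,1)$ through $P_i$. From $-K_{\mathbb{P}^1\times\mathbb{P}^1}\equiv 2C+\tfrac27\sum_{i=1}^{7}F_i$, pulling back via $\phi_1^*C=\tilde C+E_1$ and $\phi_1^*F_i=\tilde F_i+E_i$ one gets
\[
-K_S\equiv 2\tilde C+\tfrac27\tilde F_1+\tfrac97 E_1+\sum_{i=2}^{7}\Bigl(\tfrac27\tilde F_i-\tfrac57 E_i\Bigr).
\]
Using $aB\equiv\tfrac a6\sum_{i=2}^{7}(\tilde F_i+E_i)$ and adding $\sum_i a_iE_i$, this yields
\[
\mu H\equiv 2\tilde C+\tfrac27\tilde F_1+\Bigl(\tfrac97+a_1\Bigr)E_1+\sum_{i=2}^{7}\Bigl(\tfrac a6+\tfrac27\Bigr)\tilde F_i+\sum_{i=2}^{7}\Bigl(\tfrac a6-\tfrac57+a_i\Bigr)E_i,
\]
whose coefficients are all positive exactly when $a>\tfrac{30}{7}$ (the binding inequality being $\tfrac a6>\tfrac57$). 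Since $\tilde F_i=E_i'$, the support of this divisor is $\phi_1^{-1}(C\cup F_1\cup\dots\cup F_7)$, and the complement $S\setminus\mathrm{Supp}(\mu H)\cong(\mathbb{P}^1\times\mathbb{P}^1)\setminus(C\cup F_1\cup\dots\cup F_7)$ is a cylinder by Example~\ref{example:P1-bundle}, giving the required $H$-polar cylinder.

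In the case $W\cong\mathbb{F}_1$ I would take $C$ to be the $(-1)$-section of $\mathbb{F}_1$; it contains none of the $P_i$, since otherwise its proper transform on $S$ would be an irreducible curve of self-intersection $\leqslant-2$, which a smooth del Pezzo surface does not carry. With $F_i$ the fibre through $P_i$ and $-K_{\mathbb{F}_1}\equiv 2C+\tfrac37\sum_{i=1}^{7}F_i$, the analogous computation (now $\phi_1^*C=\tilde C$, and $aB\equiv\tfrac a7\sum_{i=1}^{7}(\tilde F_i+E_i)$) gives
\[
\mu H\equiv 2\tilde C+\sum_{i=1}^{7}\Bigl(\tfrac{a+3}{7}\Bigr)\tilde F_i+\sum_{i=1}^{7}\Bigl(\tfrac{a-4}{7}+a_i\Bigr)E_i,
\]
with all coefficients positive once $a>4$, a fortiori when $a>\tfrac{30}{7}$; here $\mathrm{Supp}(\mu H)=\phi_1^{-1}(C\cup F_1\cup\dots\cup F_7)$ and the complement is a cylinder by Example~\ref{example:lines}.

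The argument is essentially bookkeeping; the one place to be careful is the choice of how to spread the fibre class $aB$ --- over the six ``deficient'' indices $2,\dots,7$ in the $\mathbb{P}^1\times\mathbb{P}^1$ case --- which is precisely what pins the threshold at $\tfrac{30}{7}=\tfrac{6\cdot5}{7}$; and, as elsewhere in the paper, I would silently ignore the degenerate configurations (e.g.\ the curve $C$ happening to pass through several of the $P_i$), which only make the coefficients of the corresponding $E_i$ larger.
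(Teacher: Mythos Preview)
Your proof is correct and follows exactly the approach the paper intends: the paper's own proof is the single line ``The proof of Theorem~\ref{theorem:CL0-2} works almost verbatim for this case,'' and you have carried out precisely that verbatim adaptation with the bookkeeping adjusted from six to seven exceptional curves, yielding the threshold $\tfrac{30}{7}$ in the $\mathbb{P}^1\times\mathbb{P}^1$ case and the weaker $a>4$ in the $\mathbb{F}_1$ case.
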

\begin{proof}
The proof of Theorem~\ref{theorem:CL0-2} works almost verbatim for this case.
\end{proof}

\bigskip
\bigskip

\textbf{Acknowledgements.} 
The authors are grateful to the referees for their invaluable comments.  In particular, they pointed out gaps in the original  proofs of Theorems~\ref{theorem:BR3-7} and~\ref{theorem:CL3-6}.  The gaps have been filled up in the revised version. Their  comments enable the authors to improve the results as well as the
exposition.
The first author was supported within the framework of a subsidy granted to the HSE
by the Government of the Russian Federation for the implementation of the Global Competitiveness Program.
The second author has been supported by IBS-R003-D1, Institute for Basic Science in Korea and the third author by NRF-2014R1A1A2056432, the National Research Foundation in Korea.

\end{document}